\documentclass[a4paper,11pt]{amsart}
\synctex=1

\usepackage{subfiles}

\usepackage[english,french]{babel}
\usepackage{graphicx}
\usepackage{amsmath} 
\usepackage[all]{xy}
\usepackage{amsthm}
\usepackage{amsfonts}
\usepackage{amssymb}
\usepackage{mathrsfs}
\usepackage{amsbsy}
\usepackage{mathrsfs}
\usepackage{stmaryrd}
\usepackage{amsmath}
\usepackage{mathtools}
\usepackage[utf8]{inputenc}
\usepackage{calligra}
\usepackage{mathtools}
\usepackage{aurical}
\usepackage[T1]{fontenc}

%
%
%

%
%

%
\makeatletter
\DeclareFontFamily{OMX}{MnSymbolE}{}
\DeclareSymbolFont{MnLargeSymbols}{OMX}{MnSymbolE}{m}{n}
\SetSymbolFont{MnLargeSymbols}{bold}{OMX}{MnSymbolE}{b}{n}
\DeclareFontShape{OMX}{MnSymbolE}{m}{n}{
    <-6>  MnSymbolE5
   <6-7>  MnSymbolE6
   <7-8>  MnSymbolE7
   <8-9>  MnSymbolE8
   <9-10> MnSymbolE9
  <10-12> MnSymbolE10
  <12->   MnSymbolE12
}{}
%
\let\llangle\@undefined
\let\rrangle\@undefined
\DeclareMathDelimiter{\llangle}{\mathopen}%
                     {MnLargeSymbols}{'164}{MnLargeSymbols}{'164}
\DeclareMathDelimiter{\rrangle}{\mathclose}%
                     {MnLargeSymbols}{'171}{MnLargeSymbols}{'171}
\makeatother

\usepackage[usenames,dvipsnames]{color}
\usepackage{color}

\RequirePackage{color}
\definecolor{bwmagenta}{rgb}{0.0,0.45,0.6}

\definecolor{bwblue}{rgb}{0.4,0.1,0.2}

\usepackage[usenames,dvipsnames]{xcolor}

\usepackage[bookmarks,colorlinks=true,pagebackref,final,breaklinks=true,pdfsubject={LaTeX}]{hyperref}
\hypersetup{
        bookmarksnumbered=true,
        linkcolor=bwblue,
        citecolor=bwmagenta,
}

\usepackage{calligra}
\makeatletter
\def\@splitop#1#2\@nil{$\mathscr{#1}\!\!$\calligra#2\,\,}
\newcommand*\DeclareCursiveOperator[2]{%
  \newcommand#1{\mathop{\mbox{\@splitop#2\@nil}}\nolimits}}
\makeatother
\DeclareCursiveOperator{\TAY}{Hess}
\DeclareCursiveOperator{\HOM}{Hom}
\DeclareCursiveOperator{\Det}{D{}et}

\newtheorem{theo}{Theorem}[section]
\newtheorem{cor}[theo]{Corollary}

\newtheorem{lemma}[theo]{Lemma}
\newtheorem{remark}[theo]{Remark}

\newtheorem{proposition}[theo]{Proposition}
\newtheorem{example}[theo]{Example}

\newtheorem{definition}[theo]{Definition}
\newtheorem{theorem}{Theorem}[section]

\newcommand{\quo}[1]{ \mathbf{Z}/p^{n}\mathbf{Z}  }

\usepackage[OT2,T1]{fontenc}
\DeclareSymbolFont{cyrletters}{OT2}{wncyr}{m}{n}
\DeclareMathSymbol{\sha}{\mathalpha}{cyrletters}{"58}

\newcommand{\ord}{\circ}



\newcommand{\Z}{\mathbf{Z}}
\newcommand{\Q}{\mathbf{Q}}

\newcommand{\C}{\mathbf{C}}

\newcommand{\et}{\rm{\acute et}}






\usepackage{lipsum}

\usepackage{suffix}

\newcommand\chapterauthor[1]{\authortoc{#1}\printchapterauthor{#1}}
\WithSuffix\newcommand\chapterauthor*[1]{\printchapterauthor{#1}}

\makeatletter
\newcommand{\printchapterauthor}[1]{%
  {\parindent0pt\vspace*{-25pt}%
  \linespread{1.1}\large\scshape#1%
  \par\nobreak\vspace*{35pt}}
  \@afterheading%
}
\newcommand{\authortoc}[1]{%
  \addtocontents{toc}{\vskip-10pt}%
  \addtocontents{toc}{%
    \protect\contentsline{chapter}%
    {\hskip1.3em\mdseries\scshape\protect\scriptsize#1}{}{}}
  \addtocontents{toc}{\vskip5pt}%
}
\makeatother

\usepackage{turnstile}

\newcommand{\invlim}{\mathop{\varprojlim}\limits}
\DeclareMathOperator{\fin}{f}

\DeclareMathOperator{\wt}{wt}
\newcommand{\kk}{{k_{_{\!\circ}}}}

\DeclareMathOperator{\cyc}{cyc}

\newcommand{\hkappa}{\boldsymbol{\kappa}}
\newcommand{\uepsilon}{\underline{\varepsilon}_{\cyc}}
\newcommand{\groupringr}{{\Lambda^{^{{\!\!\!\!\circ}}}_r}}
\newcommand{\groupring}{{\Lambda^{^{\!\!\!\!\circ}}}}

\newcommand{\hphi}{\boldsymbol{\phi}}
\DeclareMathOperator{\spec}{sp}
\DeclareMathOperator{\dR}{\mathrm{dR}}
\DeclareMathOperator{\cris}{cris}
\DeclareMathOperator{\nr}{nr}
\newcommand{\hU}{{\mathbb{U}}}
\newcommand{\hD}{{\mathbb{D}}}
\newcommand{\hV}{{\mathbb{V}}}

\newcommand{\half}{{{\ndtstile{2}{1}}}}
\newcommand{\mhalf}{{{\ndtstile{2}{-1}}}}

\newcommand{\llll}{{\ell_{_{\!\circ}}}}
\newcommand{\mm}{{m_{_{\!\circ}}}}

\newcommand{\Hr}{{{\mathbb H}^{111}(X_r)}}
\newcommand{\Hinfty}{{{\mathbb H}^{111}(X_\infty^*)}}
\newcommand{\Hrord}{{{\mathbb H}^{111}_{\ord}(X_r)}}

\newcommand{\hlambda}{\boldsymbol{\lambda}}

\newcommand{\cH}{\mathcal H}
\newcommand{\co}{o}

\newcommand{\cE}{\mathcal E}

\newcommand{\cW}{\mathcal W}
\newcommand{\cA}{\mathcal A}

\newcommand{\Gal}{\mathrm{Gal\,}}
\newcommand{\GL}{\mathrm{GL}}

\newcommand{\Fil}{\mathrm{Fil}}

\newcommand{\Aut}{\mathrm{Aut}}
\newcommand{\Fr}{\mathrm{Fr}}

\newcommand{\ordi}{{\mathrm{ord}}}
\newfont{\gotip}{eufb10 at 12pt}

\newcommand{\cO}{{\mathcal O}}
\newcommand{\cM}{{\mathcal M}}

\newcommand{\cL}{{\mathcal L}}

\newcommand{\ra}{\rightarrow}
\newcommand{\lra}{\longrightarrow}

\newcommand{\SL}{{\mathrm {SL}}}

\newcommand{\CH}{{\mathrm{CH}}}
\newcommand{\AJ}{{\mathrm{AJ}}}

\newcommand{\testphi}{\breve{\phi}}
\newcommand{\testhphi}{\breve{\hphi}}
\newcommand{\testf}{\breve{f}}
\newcommand{\testg}{\breve{g}} 
\newcommand{\testh}{\breve{h}}

\newcommand{\htf}{\breve{\mathbf{f}}}
\newcommand{\htg}{\breve{\mathbf{g}}}
\newcommand{\hth}{\breve{\mathbf{h}}}
\newcommand{\Lp}{{\mathscr{L}_p}}

\newcommand{\hf}{{\mathbf{f}}}
\newcommand{\hg}{{\mathbf{g}}}
\newcommand{\hh}{{\mathbf h}}

\newcommand{\heta}{{\mathbf \eta}}
\newcommand{\cl}{{\mathrm{cl}}}

\newcommand{\XX}{\mathbb{X}}
\newcommand{\YY}{\mathbb{Y}}

\numberwithin{equation}{section}

\begin{document}

\title[$p$-adic families of diagonal cycles]{$p$-adic families of diagonal cycles}
\author{Henri Darmon and Victor Rotger}

\begin{abstract}
This note provides the construction of a three-variable family of cohomology classes arising from diagonal cycles on a triple product of towers of modular curves, and proves a reciprocity law relating it to 
the three variable triple-product $p$-adic $L$-function associated to a triple of Hida families by means of Perrin-Riou's $\Lambda$-adic regulator.

\medskip\medskip
\noindent
{\bf {\em R\'esum\'e}}. ---
On construit une famille \`a trois variables 
de classes de cohomologie associ\'ee  
\`a des  cycles diagonaux sur le   produit triple 
de tours de courbes modulaires, et on d\'emontre
 une loi de r\'eciprocit\'e  qui r\'ealise
la fonction $L$ $p$-adique  d'un triplet de familles de Hida comme l'image de cette famille de classes de cohomologie par le  r\'egulateur $\Lambda$-adique de Perrin-Riou.
\end{abstract}

\address{H. D.: Montreal, Canada}
\email{darmon@math.mcgill.ca}
\address{V. R.: IMTech, UPC and Centre de Recerca Matem\`{a}tiques, C. Jordi Girona 1-3, 08034 Barcelona, Spain}
\email{victor.rotger@upc.edu}

\subjclass{11G18, 14G35}
\keywords{Hida families, diagonal cycles, triple product $p$-adic $L$-functions, Perrin-Riou regulators. \\
Familes de Hida, cycles diagonaux, fonctions $L$ $p$-adiques, produit triple de Garrett-Rankin, r\'egulateurs de Perrin-Riou.}

\maketitle

\begin{center}
{\em To Bernadette Perrin-Riou on her 65-th birthday}
\end{center}

\tableofcontents

\subjclass{11G18, 14G35}

\medskip
 \noindent
{\bf Acknowledgements.} The first author was supported by an NSERC Discovery grant, and the
  second author was supported by Grant MTM2015-63829-P. The second author also acknowledges the financial support by ICREA under the ICREA Academia programme.  This project has received funding from the European Research Council (ERC) under the European
Union's Horizon 2020 research and innovation programme (grant agreement No 682152).

\section*{Introduction}

The main purpose of this article is to supply a construction of a three-variable family of cycles  interpolating the generalized diagonal cycles introduced in \cite{DR1}, 
 and to prove a reciprocity law relating this family to 
the three variable triple-product $p$-adic $L$-function associated to a triple of Hida families by means of Perrin-Riou's $\Lambda$-adic regulator. 

In order to give a flavor of our construction, let us describe in more detail the organization and contents of this article.
  
After reviewing some background in the first section, in section \ref{sec:GKC} we construct for every $r\geq 1$ a completely explicit family of cycles in the cube $X_r^3$ of the modular curve $X_r=X_1(Mp^r)$ of $\Gamma_1(Mp^r)$-level structure. This family is parametrized by the space of $\SL_2(\Z/p^r\Z)$-orbits of  the set 
$$\Sigma_r := ((\Z/p^r\Z \times \Z/p^r\Z)')^3  \subset  ((\Z/p^r\Z)^2)^3 $$ 
of  triples of  primitive row vectors of length $2$
 with entries in $\Z/p^r\Z$, on which $\GL_2(\Z/p^r\Z)$ acts diagonally by right multiplication. Any triple in $\Sigma_r$ gives rise to a twisted diagonal embedding of the modular curve $\XX(p^r)$ of $\Gamma_1(M)\cup \Gamma(p^r)$-level structure into the three-fold $X_r^3$ and the associated cycle is defined as the image of this map: we refer to \eqref{Dabc} for the precise recipe.

The parameter space $\Sigma_r/\SL_2(\Z/p^r\Z)$ is closely related to $((\Z/p^r\Z)^\times)^3$ and as shown throughout \S \ref{sec:GKC}, the associated family of global cohomology classes introduced in Definition \ref{def-kappa-abc} can be packaged into a global $\Lambda$-adic cohomology class parametrized by three copies of weight space.

Along \S \ref{sec:higherweight} and \S \ref{sec:cris}  we study the higher weight and cristalline specialisations of this family and we eventually prove in Theorem \ref{prop:main-crystalline} that they interpolate the classes introduced in  \cite{DR1} as claimed above.

Finally, in \S \ref{3.1} we recall Garrett-Hida's triple product $p$-adic $L$-function associated to a triple of Hida families $(\hf,\hg,\hh)$ and prove in Theorem \ref{L=L} a reciprocity law expressing the latter as the image of our three-variable cohomology classes (as specified in Definition \ref{def-family}) under Perrin-Riou's $\Lambda$-adic regulator.

 It is instructive to compare the construction of our family  to the  
  approach taken in 
 \cite{DR2}, which associated to a triple $(f,\hg,\hh)$ consisting of a {\em fixed} newform $f$ and
 a pair $(\hg,\hh)$ of Hida families a {\em one-variable} family of cohomology classes instead of the two-variable family that one might
 have felt entitled to {\em a priori}.  
 This shortcoming of the earlier approach can be understood  by noting  that the space of
 embeddings of $\XX(p^r)$ into $X_1(M)\times X_r \times X_r$ as above in which the projection to the first factor is fixed
  is naturally parametrized by the coset
 space $M_2(\Z/p^r\Z)'/\SL_2(\Z/p^r\Z)$, where $M_2(\Z/p^r\Z)'$  denotes the set of $2\times 2 $ matrices whose rows are not divisible by $p$.
 The resulting cycles are therefore   parametrized by the coset space
 $\GL_2(\Z/p^r\Z)/\SL_2(\Z/p^r\Z) = (\Z/p^r\Z)^\times$,
 whose inverse limit with $r$ is the one dimensional $p$-adic space $\Z_p^\times$ rather than a two-dimensional one. 
 
As mentioned already in our previous article in this volume, these  
cycles are of  interest in their own right, and shed a useful complementary perspective on the construction of the $\Lambda$-adic cohomology classes for the triple product when compared to \cite{BSV1}.
Indeed, their study forms the basis for the ongoing PhD thesis of David Lilienfeldt
\cite{Li21}, and has let to  interesting open questions as those that are explored by Castella and Hsieh in \cite{CaHs-Rank2}.

\section{Background}
\subsection{Basic notations}
Fix an algebraic closure $\bar\Q$ of $\Q$.
All   the number fields 
that   arise will be  viewed as embedded in this algebraic closure.
For each such $K$, let
 $G_{K}:= \Gal(\bar\Q/K)$ denote its absolute
Galois group.
Fix an odd prime $p$ and
 an embedding $\bar\Q \hookrightarrow \bar\Q_p$; let $\ordi_p$ denote the 
  resulting   $p$-adic valuation on $\bar\Q^\times$, normalized in 
such a way that $\ordi_p(p)=1$.

For a variety $V$ defined over $K\subset\bar\Q$,
let $\bar V$ denote the base change of $V$ to $\bar\Q$. 
If $\mathcal F$ is an \'etale sheaf on $V$, 
write $H^i_{\et}(\bar V,\mathcal F)$  for the $i$th 
\'etale cohomology group  of $\bar V$ with values in $\mathcal F$, equipped 
with its continuous action by $G_K$. 

Given a prime $p$, let
 $\Q(\mu_{p^\infty}) = \cup_{r\geq 1} \Q(\zeta_{r})$ be the cyclotomic extension of $\Q$ obtained by adjoining
to $\Q$ a  primitive $p^r$-th root of unity $\zeta_r$.
 Let $$\varepsilon_{\cyc}: G_\Q \lra \Gal(\Q(\mu_{p^\infty})/\Q) \stackrel{\simeq}{\lra} \Z_p^\times$$ 
 denote the $p$-adic cyclotomic character. 
 It can be factored as $\varepsilon_{\cyc} = \omega \langle \varepsilon_{\cyc}\rangle$,
 where 
  $$\omega: G_\Q \lra \mu_{p-1} \qquad  \quad \langle \varepsilon_{\cyc} \rangle: G_\Q \lra1+p\Z_p$$ are 
   obtained by composing   $\varepsilon_{\cyc}$ with the   
 projection onto the first and second factors in the
 canonical decomposition $\Z_p^\times \simeq \mu_{p-1} \times (1+p\Z_p)$. 
 If $\cM$ is a $\Z_p[G_\Q]$-module and $j$ is an integer, write 
  $\cM(j)=\cM\otimes \varepsilon_{\cyc}^j$ for  the $j$-th Tate twist of $\cM$. 
  
  Let 
  $$  \groupringr := \Z_p[(\Z/p^r\Z)^\times], \qquad \groupring := \Z_p[[\Z_p^\times]] := \invlim_r \groupringr $$
  denote the group ring and completed group ring attached to the profinite group $\Z_p^\times$.
  The ring $\groupring$  is equipped with $p-1$ distinct algebra homomorphisms $\omega^i:\groupring \rightarrow \Lambda$
  (for $0\le i \le p-2$) 
  to the local ring 
   $$  \Lambda=\Z_p[[1+p\Z_p]] = \invlim \Z_p[1+p \Z/p^r\Z] \simeq \Z_p[[T]],$$ 
  where  $\omega^i$    sends a group-like element $a\in \Z_p^\times$ to 
  $\omega^i(a) \langle a\rangle \in \Lambda$.
   These homomorphisms identify $\groupring$ with the direct sum  
  $$ \groupring = \bigoplus_{i=0}^{p-2}  \Lambda. $$
 The local ring $\Lambda$ is called the one variable 
  Iwasawa algebra. 
More generally, for any integer $t\geq 1$, let 
 $$ \groupring^{\otimes t} :=  \groupring \hat\otimes_{\Z_p}  \stackrel{t}{\ldots} \hat\otimes_{\Z_p}\groupring, \qquad 
  \Lambda^{\otimes t} = \Lambda \hat\otimes_{\Z_p}  \stackrel{t}{\ldots} \hat\otimes_{\Z_p}\Lambda\simeq \Z_p[[T_1,\ldots T_t]].$$ 
The latter ring is called the  Iwasawa algebra in $t$ variables, and  is isomorphic to the power series ring in $t$ variables over $\Z_p$,
while 
$$ \groupring^{\otimes t} = \bigoplus_{\alpha} \Lambda^{\otimes t},$$
the sum running over the $(p-1)^t$  distinct $\Z_p^\times$ valued characters of $(\Z/p\Z)^{\times t}$.

\subsection{Modular forms and Galois representations}
\label{subsec:forms}

Let 
 $$\phi = q+\sum_{n\geq 2} a_n(\phi)q^n \in S_{k}(M,\chi)$$
be a cuspidal modular form of weight $k \geq 1$, level $M$ and character  $\chi:(\Z/M\Z)^\times \ra \C^\times$, and assume that $\phi$ is an eigenform with respect to all good Hecke operators $T_\ell$, $\ell \nmid M$. 

Fix an odd prime number $p$ (that in this section may or may not divide $M$).
Let $\cO_\phi$ denote the valuation ring of the finite extension 
of $\Q_p$  generated by the fourier coefficients of $\phi$, and let
  $\mathbb T$ denote the Hecke algebra generated over $\Z_p$ by the good Hecke operators $T_\ell$ with 
 $\ell \nmid M$ and by the diamond operators acting on $S_k(M,\chi)$. 
 The eigenform $\phi$ gives rise to an algebra homomorphism
 $$ \xi_\phi: \mathbb T \lra \cO_\phi$$
 sending $T_\ell$ to $a_\ell(\phi)$ and  the diamond operator $\langle \ell\rangle$ to 
 $\chi(\ell)$. 

A fundamental construction of Shimura, Deligne, and Serre-Deligne attaches to 
$\phi$  an irreducible   Galois representation 
$$
\varrho_\phi: G_\Q \lra  \Aut(V_\phi) \simeq \GL_2(\cO_\phi)
$$
of rank $2$, unramified at all primes  $\ell\nmid M p$, and for which 
\begin{equation}
 \label{eqn:deligne-rep}
 \det(1-\varrho_\phi(\Fr_\ell) x) = 
1-a_\ell(\phi)x+\chi(\ell)\ell^{k-1} x^2,
\end{equation}
where $\Fr_\ell$ denotes 
the arithmetic Frobenius element at $\ell$.
     This property characterizes the semi-simplification of   $\varrho_\phi$ up to isomorphism.

When $k:= \kk+2 \ge 2$, the representation $V_\phi$ can be realised in the $p$-adic  \'etale cohomology of an 
appropriate Kuga-Sato variety. Since this realisation is important for the construction of generalised Kato
classes, we now briefly recall its salient features.
 Let $Y=Y_1(M)$ and
$X=X_1(M)$ denote the open and closed modular curve representing the fine 
moduli functor of isomorphism classes of pairs $(A,P)$ formed by
 a (generalised) elliptic curve $A$  together with a torsion 
point $P$ on $A$ of exact order $M$. Let
\begin{equation}\label{piA}
\pi: \mathcal A_{\circ} \,\lra \,Y
\end{equation}
denote the universal   elliptic curve over $Y$.

The  $\kk$-th open Kuga-Sato variety  
over $Y$  
is the    $\kk$-fold   fiber product 
\begin{equation}\label{KugaSato}
\cA^\kk_{\circ}  := \cA_{\circ} \times_Y \stackrel{(\kk)}{\dots} \times_Y \cA_{\circ}
\end{equation}
 of $\cA_{\circ}$ over $Y$. 
 The variety $\cA^\kk_\circ$  admits a smooth compactification 
$\cA^\kk$ which is fibered over $X$ and  is called
the $\kk$-th Kuga-Sato variety over $X$; we refer to Conrad's appendix in \cite{BDP} for more details.
The geometric points  in $\cA^\kk$ that lie above $Y$ are in bijection with
isomorphism classes of 
 tuples $[(A,P),P_1,\dots,P_\kk]$, where $(A,P)$ is  associated 
 to a point of $Y$ as in the previous paragraph and $P_1,...,P_\kk$ are  points on $A$.

The representation $V_\phi$ is realised (up to a suitable Tate twist) 
in  the  middle degree \'etale cohomology 
$H^{\kk+1}_{\et}(\bar\cA^\kk, \Z_p)$.
More precisely,  let
$$
\mathcal H_r := R^1\pi_*\,\Z/p^r\Z (1), \qquad \quad
  \mathcal H := R^1\pi_*\,\Z_p(1),
$$
and for 
any  $\kk\geq 0$, define 
\begin{equation}
\label{eqn:sheaves-of-symmetric-tensors}
\cH_r^\kk := \mathrm{TSym}^\kk(\mathcal H_r), \qquad 
\cH^\kk := \mathrm{TSym}^\kk(\mathcal H)
\end{equation}
 to be the sheaves of symmetric $\kk$-tensors of $\mathcal H_r$ and 
 $\mathcal H$, respectively.
As defined in e.g.\,\cite[(2.1.2)]{BDP}, there
 is an idempotent $\epsilon_{\kk}$ in the ring of rational 
 correspondences of $\cA^{\kk}$ whose induced
  projector on the \'etale cohomology groups of this variety satisfy:
  \begin{equation}
\label{idempotent}
\epsilon_{\kk} \big( H^{\kk+1}_{\et}(\bar\cA^{\kk},\Z_p(\kk))\big)  =  H^1_{\et}(\bar X,\cH^{\kk}).
\end{equation}

Define the $\cO_\phi$-module 
 \begin{equation}
\label{Vphi-class}
V_\phi(M) := 
H^1_{\et}(\bar X,\cH^{\kk}(1))\otimes_{\mathbb T, \xi_\phi} \cO_\phi,
\end{equation} 
and write 
\begin{equation}
\label{vpi}
\varpi_\phi: H^1_{\et}(\bar X,\cH^{\kk}(1)) \lra  V_\phi(M)
\end{equation}
for the canonical projection of ${\mathbb  T}[G_\Q]$-modules arising from \eqref{Vphi-class}. 
Deligne's results and the theory of newforms show that the 
 module $V_\phi(M)$ is the direct sum of several copies of a locally free module $V_\phi$ of rank $2$ over $\cO_\phi$ that satisfies 
 \eqref{eqn:deligne-rep}.

Let $\alpha_\phi$ and $\beta_\phi$ the two roots of the $p$-th Hecke 
polynomial $T^2-a_p(\phi)T+\chi(p)p^{k-1}$, 
ordered in such a way that $\ordi_p(\alpha_\phi) \leq \ordi_p(\beta_\phi)$. (If $\alpha_\phi$ and $\beta_\phi$ have the same $p$-adic valuation, simply fix an arbitrary ordering of  the two roots.)
We set $\chi(p)=0$ whenever $p$ divides the primitive level of $\phi$ and thus $\alpha_\phi = a_p(\phi)$ and $\beta_\phi = 0$ in this case. 
The eigenform $\phi$ is said to be {\em ordinary}
 at $p$ 
 when   $\ordi_p(\alpha_\phi) = 0$.
  In that case,   there is an exact sequence of $G_{\Q_p}$-modules
\begin{equation}\label{dec}
0 \ra V_\phi^+ \lra V_\phi \lra V_\phi^- \ra 0, 
\qquad V_\phi^+ \simeq \cO_\phi(\varepsilon_{\cyc}^{k-1} \chi  \psi^{-1}_\phi), \quad  V_\phi^-\simeq \cO_\phi(\psi_\phi),
\end{equation}
where $\psi_\phi$ is the unramified character of $G_{\Q_p}$ sending $\Fr_p$ to $\alpha_\phi$.

\subsection{Hida families and $\Lambda$-adic Galois representations}
\label{subsec:hida}

Fix a prime $p\geq 3$. The formal spectrum $$\mathcal W := \mathrm{Spf}(\Lambda)$$ of
 the Iwasawa algebra $\Lambda=\Z_p[[1+p\Z_p]]$ is  called the {\em weight space} 
attached to $\Lambda$. 
The  
$A$-valued points  of $\mathcal W$ over a $p$-adic ring $A$ are given by
\begin{equation*}
\mathcal W(A) = \mathrm{Hom}_{\rm alg}(\Lambda, A) =  \mathrm{Hom}_{\rm grp}(1+p\Z_p,A^\times),
\end{equation*}
 where the Hom's in this definition denote continuous homomorphisms of $p$-adic rings
   and profinite groups respectively. 
 Weight space is equipped with a distinguished collection of   {\em arithmetic points}
   $\nu_{\kk,\varepsilon}$ , indexed by 
 integers $\kk \geq 0$ and    Dirichlet characters  $\varepsilon: (1+ p\Z/p^r\Z)  \rightarrow \Q_p(\zeta_{r-1})^\times$
 of $p$-power conductor. 
 The  point 
$\nu_{\kk,\varepsilon} \in \mathcal W (\Z_p[\zeta_r])$  is  defined by 
$$\nu_{\kk,\varepsilon}(n) = \varepsilon(n) n^{\kk}, $$
and the notational shorthand $\nu_\kk:=\nu_{\kk,1}$ is adopted throughout.
More generally, if $\tilde\Lambda$ is any finite flat $\Lambda$-algebra, a point $x \in
\tilde \cW:= \mathrm{Spf}(\tilde\Lambda)$ is said to be arithmetic if its restriction to $\Lambda$ 
agrees with $\nu_{\kk,\varepsilon}$ for some $\kk$ and $\epsilon$. The integer $k=\kk +2$ is called
the {\em weight } of $x$ and denoted $\wt(x)$.

Let 
\begin{equation}
\label{ecyc}
\underline{\varepsilon}_{\cyc}: G_\Q   \lra  \Lambda^\times
\end{equation}
denote the $\Lambda$-adic cyclotomic character which sends a Galois element $\sigma$ to the group-like element $[\langle\varepsilon_{\cyc}(\sigma)\rangle]$. This character interpolates 
the powers of the cyclotomic character, in the sense that 
\begin{equation}\label{ecycint}
\nu_{\kk,\varepsilon}\circ \underline{\varepsilon}_{\cyc} = 
\varepsilon  \cdot \langle \varepsilon_{\cyc} \rangle^{\kk} = \varepsilon \cdot \varepsilon_{\cyc}^{\kk}  \cdot \omega^{-\kk}.
\end{equation}

Let $M\geq 1$ be an integer not divisible by $p$.

\begin{definition}\label{def-Hida-fam} A {\em Hida family of tame level $M$ and tame character $\chi: (\Z/M\Z)^\times \,\ra \,\bar\Q_p^\times$}  is 
a formal $q$-expansion
$$
\hphi = \sum_{n\geq 1} a_n(\hphi) q^n \in \Lambda_{\hphi}[[q]]
$$
with coefficients in 
a finite flat $\Lambda$-algebra $\Lambda_{\hphi}$, such that for any
arithmetic point $x\in  \cW_{\hphi}:= \mathrm{Spf}(\Lambda_{\hphi})$
above $\nu_{\kk,\epsilon}$, where $\kk\ge 0$ and  
$\varepsilon$ is a character of conductor $p^r$,
the  series
$$
\hphi_x := \sum_{n\geq 1} x(a_n(\hphi)) q^n \in \bar\Q_p[[q]]  
$$
is the $q$-expansion of a classical $p$-ordinary eigenform  in the space
 $S_{k}(Mp^r,\chi \varepsilon \omega^{-\kk})$ of cusp forms
of weight $k = \kk+2$, level $Mp^r$ and nebentype $\chi \varepsilon \omega^{-\kk}$. 
\end{definition}

By enlarging $\Lambda_{\hphi}$ if necessary, we shall assume throughout that $\Lambda_{\hphi}$ contains the $M$-th roots of unity.

\begin{definition}\label{tame-cris} Let $x\in \cW_{\hphi}$ 
be an arithmetic point lying above the point  $\nu_{\kk,\epsilon}$ of weight space.
The point $x$ is said to be
\begin{itemize}
\item {\em  tame} if the  character $\epsilon$ is  tamely ramified, i.e., factors through $(\Z/p\Z)^\times$.
\item {\em crystalline} if  $\epsilon\omega^{-\kk}  =1$, i.e., if the weight $k$ specialisation of $\hphi$ at $x$ has trivial nebentypus character at $p$.
\end{itemize}
We let $\cW_{\hphi}^\circ$ denote the set of crystalline arithmetic points of $\cW_{\hphi}$. 
\end{definition}
Note that a crystalline point is necessarily tame but of course there are tame points that are not crystalline. The justification for this terminology is that  the Galois representation
$ V_{\phi_x}$ is crystalline at $p$ when $x$ is crystalline.

If $x$ is a crystalline point, then the classical form $\hphi_x$ is always old at $p$ if $k>2$. In that case there exists an eigenform $\phi_x^\circ$ of level $M$ such that $\hphi_x$ is the ordinary $p$-stabilization of $\hphi^\circ_x$. If the weight is $k=1$ or $2$, $\hphi_x$ may be either old or new at $p$; if it is new at $p$ then we set $\hphi_x^\circ=\hphi_x$ in order to have uniform notations.

 We say $\hphi$ is residually irreducible if the mod $p$ Galois representation associated to the Deligne representations associated to $\phi_x^{\circ}$ for any crystalline classical point is irreducible.

Finally, the Hida family $\hphi$ is said to be {\em primitive} of tame level $M_{\hphi}\mid M$
 if for all but finitely many
  arithmetic points   $x\in \cW_{\hphi}$ of weight $k \ge 2$, the modular form $\hphi_x$
   arises from a newform of level $M_{\hphi}$. 

 The following theorem of Hida and Wiles
associates a two-dimensional Galois representation 
 to a Hida family $\hphi$ (cf.\,e.g.\,\cite[Th\'eor\`{e}me 7]{MT}). 
 
 \begin{theorem}
 \label{Vphi-thm} 
 Assume $\hphi$ is residually irreducible. Then there is a rank two  $\Lambda_{\hphi}$-module $\mathbb{V}_{\hphi}$  equipped with a Galois 
action
\begin{equation}
\label{Vphi}
\varrho_{\hphi}: G_{\Q}   \lra  \Aut_{\Lambda_{\hphi}}(\mathbb{V}_{\hphi}) \simeq  \GL_2(\Lambda_{\hphi}),
\end{equation}
such that, for all arithmetic points $x:\Lambda_{\hphi} \lra \bar\Q_p$,
$$\mathbb{V}_{\hphi}\otimes_{x,\Lambda_{\hphi}} \bar \Q_p \simeq V_{\hphi_x} \otimes  \bar \Q_p.$$

Let
$$
\psi_{\hphi}: G_{\Q_p}\lra \Lambda_{\hphi}^\times 
$$
denote the unramified character sending a Frobenius element $\Fr_p$  to $\mathbf{a}_p(\hphi)$. The restriction of $\mathbb{V}_{\hphi}$ to $G_{\Q_p}$ admits a filtration
\begin{equation}\label{Wiles-ord}
0 \, \ra \, \mathbb{V}_{\hphi}^+ \, \ra \, {\mathbb{V}_{\hphi}}  \, \ra \, \mathbb{V}_{\hphi}^- \, \ra \, 0 \quad \mbox{ where } \,
\mathbb{V}_{\hphi}^+ \simeq \Lambda_{\hphi}(\psi_{\hphi}^{-1} \chi \varepsilon_{\cyc}^{-1}\uepsilon ) \mbox{\ \ and\ \ }
\mathbb{V}_{\hphi}^- \simeq \Lambda_{\hphi}(\psi_{\hphi}).
\end{equation}

\end{theorem}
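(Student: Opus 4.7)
The plan is to follow the construction of Hida and Wiles, in the form made canonical by Mazur--Tilouine \cite{MT}. The argument splits into three parts: (i) construct $\mathbb{V}_{\hphi}$ together with its global Galois action; (ii) verify the interpolation property at arithmetic points; (iii) produce the local filtration at $p$.

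For (i), I would form Hida's big ordinary cohomology
$$ \mathbb{H}^1_{\mathrm{ord}} := \varprojlim_r e_{\mathrm{ord}} \, H^1_{\et}(\bar X_1(Mp^r), \Z_p), $$
where $e_{\mathrm{ord}} = \lim_n U_p^{n!}$ is Hida's ordinary idempotent. This module is finite and flat over $\Lambda$ by Hida's control theorem, carries commuting actions of $G_\Q$ and of the big ordinary Hecke algebra $\mathbb{T}_M^{\mathrm{ord}}$, and the family $\hphi$ corresponds to a $\Lambda$-algebra homomorphism $\xi_{\hphi}: \mathbb{T}_M^{\mathrm{ord}} \to \Lambda_{\hphi}$ sending $T_\ell \mapsto a_\ell(\hphi)$ and $\langle\ell\rangle \mapsto \chi(\ell) \langle \ell \rangle$. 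Defining $\mathbb{V}_{\hphi}$ as the $\hphi$-isotypic summand then extracts a $\Lambda_{\hphi}[G_\Q]$-module; the residual irreducibility hypothesis guarantees that this summand is locally free of rank two, as one may equivalently verify by invoking the Nyssen--Rouquier lifting theorem applied to the pseudo-representation determined by $\Fr_\ell \mapsto a_\ell(\hphi)$ for $\ell\nmid Mp$.

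For (ii), Hida's control theorem identifies the specialization of $\mathbb{H}^1_{\mathrm{ord}}$ at the arithmetic point $\nu_{\kk,\varepsilon}$ with the ordinary part of $H^1_{\et}(\bar X_1(Mp^r), \cH^{\kk}(1))$, up to the appropriate Tate twist. Comparing with \eqref{Vphi-class} and using the equality of Frobenius traces $x(a_\ell(\hphi)) = a_\ell(\hphi_x)$ for all $\ell \nmid Mp$, together with Brauer--Nesbitt and the residual irreducibility of $\varrho_{\hphi_x}$, yields the required isomorphism $\mathbb{V}_{\hphi} \otimes_{x,\Lambda_{\hphi}} \bar\Q_p \simeq V_{\hphi_x} \otimes \bar\Q_p$.

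For (iii), the starting point is that $\mathbf{a}_p(\hphi)$ is a unit in $\Lambda_{\hphi}$, since every arithmetic specialization of $\hphi$ is $p$-ordinary with unit $U_p$-eigenvalue. By \eqref{dec}, the specialization of $\mathbb{V}_{\hphi}$ at each crystalline arithmetic point admits a $G_{\Q_p}$-stable line whose quotient is unramified with Frobenius acting by $\alpha_{\hphi_x} = x(\mathbf{a}_p(\hphi))$. Following Wiles's argument, one exhibits a $\Lambda_{\hphi}$-basis of $\mathbb{V}_{\hphi}$ in which $\varrho_{\hphi}|_{G_{\Q_p}}$ becomes upper triangular; Zariski density of ordinary specializations forces the diagonal quotient character to be the unramified character $\psi_{\hphi}$ sending $\Fr_p$ to $\mathbf{a}_p(\hphi)$, and the sub-character is then pinned down as $\det(\varrho_{\hphi}) \cdot \psi_{\hphi}^{-1}$, which is read off from the Hecke eigenvalues of $\hphi$ to equal $\psi_{\hphi}^{-1} \chi \varepsilon_{\cyc}^{-1} \uepsilon$. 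The main obstacle is precisely this last step: establishing the ordinary filtration integrally over $\Lambda_{\hphi}$ rather than merely after inverting $p$ or on a dense set of classical fibers. Residual irreducibility combined with the ordinariness of \emph{every} arithmetic specialization is what rules out any degeneration of the filtration along the weight axis and secures the locally free rank-one pieces $\mathbb{V}_{\hphi}^{\pm}$.
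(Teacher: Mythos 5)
The paper does not prove Theorem~\ref{Vphi-thm}: it is stated as a known result of Hida and Wiles, with the citation ``cf.\,e.g.\,\cite[Th\'eor\`{e}me~7]{MT}'', and the subsequent subsection merely records the construction of $\mathbb{V}_{\hphi}(M)$ as the $\hphi$-isotypic quotient of $e^\ast H^1_{\et}(\bar X_\infty^*,\Z_p(1))$. Your parts (i) and (ii) match that construction and the cited literature exactly (the choice of $e_{\mathrm{ord}}$ vs.\ the paper's $e^*$, and of transition maps $\varpi_{1*}$ vs.\ $\varpi_{2*}$, is immaterial), and the appeal to Nyssen--Rouquier under residual irreducibility is a standard and correct way to extract the rank-two piece.

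The one place where your sketch does not yet constitute an argument is part (iii). You acknowledge the issue yourself, but merely asserting that ``residual irreducibility combined with the ordinariness of every arithmetic specialization'' rules out degeneration is not a proof: density of ordinary classical points gives the filtration only after inverting a nonzero element of $\Lambda_{\hphi}$, since a priori the putative sub- and quotient modules could fail to be saturated or locally free over $\Lambda_{\hphi}$. The actual argument of Wiles, Mazur--Tilouine and Ohta builds the two-step filtration \emph{directly on the big module} $e^\ast H^1_{\et}(\bar X_\infty^*,\Z_p)$, using the geometry of the tower $\{X_r\}$ over $\Z_p$ (the fixed part under inertia, or the cohomology of the ordinary locus / Igusa tower of the special fiber), and then passes to the $\hphi$-quotient. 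Residual irreducibility is used to identify the quotient with the rank-two $\mathbb{V}_{\hphi}$, not to manufacture the filtration. If you want a self-contained proof rather than a sketch, you should replace the density step by this structural description of the local filtration on the inverse limit; once that is in place, your identification of the quotient character as $\psi_{\hphi}$ and of the subcharacter as $\det\varrho_{\hphi}\cdot\psi_{\hphi}^{-1}$ is correct (modulo matching the paper's normalization of $\uepsilon$ in \eqref{ecycint}).
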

The explicit construction of the Galois representation $\mathbb{V}_{\hphi}$ plays an important role in defining
the generalised Kato classes, 
and we now recall its main features.

For all   $0\le r <s$,  let 
$$ X_r := X_1(Mp^r), \qquad X_{r,s} := X_1(Mp^r) \times_{X_0(Mp^r)} X_0(Mp^s),$$
where the fiber product is taken relative to the natural projection maps. In particular,
   \begin{itemize}
   \item the curve $X:= X_0 := X_1(M)$ represents the functor of 
    elliptic curves $A$ with  $\Gamma_1(M)$-level
    structure, i.e., with a marked point of order $M$;
  \item the curve $X_r$ represents the functor classifying  pairs  
   $(A, P)$ consisting of a generalized elliptic curve $A$ with  $\Gamma_1(M)$-level
    structure
 and a point $P$  of order 
$p^r$  on $A$; 
   \item  the curve $X_{0,s} = X_1(M)\times_{X_0(M)} X_0(Mp^s)$  classifies
pairs $(A, C)$ consisting of a generalized elliptic curve $A$ with $\Gamma_1(M)$ structure
 and a cyclic  subgroup scheme 
$C$ of order 
$p^s$  on $A$;
\item the curve $X_{r,s}$  classifies
pairs $(A, P, C)$ consisting of a generalized elliptic curve $A$ with $\Gamma_1(M)$ structure, 
a point $P$ of order $r$ on $A$ and 
 and a cyclic  subgroup scheme 
$C$ of order 
$p^s$  on $A$ containing $P$.
\end{itemize}
 
The curves $X_r$ and  $X_{0,r}$  are smooth geometrically connected curves over $\Q$. The 
natural covering map $X_r\lra X_{0,r}$ 
  is   Galois with Galois group
 $(\Z/p^r\Z)^\times$   acting  on the left via the diamond operators defined by
\begin{equation}
\label{eqn:galois-auts-diamond}
  \langle a\rangle (A,P) = (A,aP).
 \end{equation}
 Let 
 \begin{equation}
 \label{eqn:def-varpi1}
 \varpi_{1}: X_{r+1}   \lra  X_r 
 \end{equation}
  denote the natural projection from level $r+1$ to level $r$ which corresponds to the map 
  $(A,P) \mapsto (A, pP)$, and to the map 
  $\tau\mapsto \tau$ on upper half planes. Let 
 $$\varpi_2: X_{r+1} \lra X_r$$ denote the other projection, corresponding to the map 
 $(A,P) \mapsto (A/ \langle p^r P\rangle, P + \langle p^r P\rangle)$, which 
 on the upper half plane sends $\tau$ to $p\tau$.
 These maps can be factored as 
\begin{equation}
\label{eqn:def-varpi-12}
 \xymatrix{  
X_{r+1} \ar[d]^-{\mu} \ar[dr]^-{\varpi_1}  &  \\
 X_{r,r+1}   \ar[r]_-{\pi_1}  & X_r,   } \qquad \qquad
 \xymatrix{  
X_{r+1} \ar[d]^-{\mu} \ar[dr]^-{\varpi_2}  &  \\
X_{r,r+1} \ar[r]_-{\pi_2}  & X_r.  }
\end{equation}
For all $r\ge 1$,  the vertical  map  $\mu$   is a   cyclic Galois covering
  of degree $p$, 
  while the  horizontal 
 maps $\pi_1$ and $\pi_2$ 
 are non-Galois coverings of degree $p$.
 When $r=0$,  the    map $\mu$ is a  cyclic Galois covering
  of degree $p-1$
 and
 $\pi_2$
are non-Galois coverings of degree $p+1$.

The $\Lambda$-adic representation $\mathbb{V}_{\hphi}$ shall be realised (up to twists)  in quotients of   the inverse 
limit of  \'etale cohomology groups arising from 
the tower
\begin{eqnarray*}
X_\infty^*  &:&    \cdots \stackrel{\varpi_1}{\lra} X_{r+1} \stackrel{\varpi_1} {\lra } X_r  
\stackrel{\varpi_1}{\lra} \cdots
  \stackrel{\varpi_1}{\lra}X_1  \stackrel{\varpi_1}{\lra}   X_0
  \end{eqnarray*}
\noindent
of modular curves.
Define the inverse limit 
\begin{equation}
\label{eqn:proj-lim-H1}
H^1_{\et}(\bar X_\infty^*, \Z_p) := \invlim_{\ \ \varpi_{1*}} H^1_{\et}(\bar X_r,\Z_p)
    \end{equation}
where the  transition maps  arise   from the  pushforward
induced by the morphism $\varpi_1$.
This  inverse limit is a module over the completed group rings $\Z_p[[\Z_p^\times]]$ 
arising from the action of the diamond operators, and is endowed with a plethora of extra structures that we now describe.

\medskip\noindent
{\em Hecke operators}.
The transition maps in \eqref{eqn:proj-lim-H1} are compatible with the action of the  Hecke operators $T_n$ 
for  all $n$ that are not divisible by $p$.
Of crucial importance for us in this article is
 Atkin's 
operator $U_p^*$, 
which operates  on $H^1_{\et}(\bar X_r,\Z_p)$    via the composition
$$   U_p^* := \pi_{1\ast} \pi_2^{\ast}$$
arising from the maps in \eqref{eqn:def-varpi-12}. 


The operator  $U_p^*$ 
is compatible  with the transition maps defining $H^1_{\et}(\bar X_\infty^*,\Z_p)$, 

\medskip\noindent
{\em Inverse systems of \'etale sheaves}.
The cohomology group $H^1_{\et}(\bar X_\infty^*,\Z_p)$ 
can be identified with the first cohomology group
of the base curve $X_1$ with values in a certain inverse systems of \'etale sheaves.

For each $r\ge 1$, let 
\begin{equation}
\label{def:Lr-sheaf}
 \cL_{r}^* :=  \varpi_{1 \ast}^{r-1} \Z_p
 \end{equation} 
be the pushforward  of the constant sheaf on $X_r$ 
via the map 
$$\varpi_1^{r-1}: X_r \lra X_1
$$

The stalk of $\cL_{r}^*$  at a geometric point $x=(A,P)$ on $X_1$ is
given by 
$$
\cL_{r,x}^* = \Z_p[A[p^r]\langle P \rangle],   
$$
where 
$$A[p^r]\langle P \rangle:= \{ Q \in A[p^r] \mbox{ such that } p^{r-1}Q = P \}. $$
The multiplication by $p$ map on the fibers gives rise to natural homomorphisms of sheaves
\begin{equation}
\label{eqn:transition-on-Lr}
  [p]: \cL_{r+1}^*\lra \cL_r^*, 
  \end{equation}
and 
Shapiro's lemma gives canonical identifications 
$$ H^1_{\et}(\bar X_r,\Z_p) = H^1_{\et}(\bar X_1,\cL_r^*),$$
for which the following diagram commutes:
$$
\xymatrix{
H^1_{\et}(\bar X_{r+1},\Z_p) \ar[r]^{\varpi_{1\ast}}  \ar@{=}[d] & H^1_{\et}(\bar X_r, \Z_p) \ar@{=}[d] \\
H^1_{\et}(\bar X_1, \cL_{r+1}^*) \ar[r]^{[p]} & H^1_{\et}(\bar X_1, \cL_r^*).}
$$

Let  $\cL_{\infty}^* := \invlim_r \cL_r^*$ denote
the inverse system of \'etale sheaves 
relative to the maps $[p]$ arising  in \eqref{eqn:transition-on-Lr}. By passing to the limit, we obtain an identification
\begin{equation}
\label{eqn:crucial-relation-for-induced}
 H^1_{\et}(\bar X_\infty^*,\Z_p) = 
\invlim_{r\ge 1} H^1_{\et}( \bar X_1, \cL_{r}^*) = H^1_{\et}(\bar X_1, \cL_\infty^*).
\end{equation}

\medskip\noindent
{\em Weight $k$ specialisation maps}.
Recall the $p$-adic \'etale sheaves  $\cH^\kk$  introduced in 
\eqref{eqn:sheaves-of-symmetric-tensors},
whose cohomology gave rise to the Deligne representations attached to modular forms of
weight $k = \kk+2$
  via  \eqref{Vphi-class}. The natural $\kk$-th power symmetrisation function
$$ A[p^r]  \lra \cH_r^\kk, \qquad Q \mapsto Q^\kk,$$  
restricted to $A[p^r]\langle P\rangle $
and extended 
 to $\cL_{r,x}^*$ 
by $\Z_p$-linearity, 
induces   morphisms 
\begin{equation}
\label{eqn:mom}
\spec_{k,r}^*:  {\cL}_r^*   \lra \cH^\kk_r
\end{equation}
of sheaves over $X_1$ (which are thus  compatible with the action of $G_\Q$ on the fibers).
These specialisation morphisms are compatible with the  transition maps $[p]$ in the sense that the diagram
$$
\xymatrix{ 
\cL_{r+1}^* \ar[r]^{[p]}  \ar[d]^-{\spec_{k,r+1}^*} & \cL_r^*  \ar[d]^-{\spec_{k,r}^*}\\
\cH_{r+1}^\kk  \ar[r] & \cH_r^\kk}
$$ 
commutes, where the bottom horizontal arrow denotes the natural reduction map.
The maps  $\spec_{k,r}^*$ 
can thus be pieced together into morphisms 
\begin{equation}
\label{eqn:mom-bis} 
\spec_k^*: \cL_\infty^* \lra \cH^\kk.
\end{equation}
The induced morphism 
\begin{equation}
\label{eqn:wtk-specialisation}
  \spec_k^*: H^1_{\et}(\bar X_\infty^*,\Z_p)   \lra H^1_{\et}(\bar X_1, \cH^\kk),
  \end{equation}
arising from those   on $H^1_{\et}(\bar X_1,\cL_\infty^*)$ 
via
 \eqref{eqn:crucial-relation-for-induced}
  will be denoted by the same symbol by abuse of notation, and is referred to as the {\em weight $k = \kk+2$ specialisation map}.
  The existence of such maps having 
 finite cokernel  reveals that  the $\Lambda$-adic Galois representation 
 $H^1_{\et}(\bar X_\infty^*,\Z_p)$ 
is rich enough to 
  capture
the Deligne representations attached to modular forms on $X_1$ of {\em arbitrary } weight $k\ge 2$.

For each $a\in 1+p\Z_p$, the  diamond operator $\langle a\rangle$ 
 acts trivially on $X_1$ and as multiplication by $a^\kk$ on
the stalks of the sheaves $\cH^\kk_r$. It follows that the weight $k$
specialisation 
 map
 $ \spec_k^*$ 
factors  through
 the quotient $H^1_{\et}(\bar X_\infty^*,\Z_p)\otimes_{\Lambda,\nu_\kk} \Z_p$,
i.e., one obtains 
a map
\begin{eqnarray*}
 \spec_k^*: H^1_{\et}(\bar X_\infty^*,\Z_p) \otimes_{\Lambda,\nu_\kk} \Z_p 
& \lra  & H^1_{\et}(\bar X_1, \cH^\kk).
\end{eqnarray*}

\begin{remark}
The inverse limit $\cL_\infty^*$  of the sheaves 
  $\underline{\cL}_r^*$ on $X_1$ has been systematically studied 
 by G. Kings in \cite[\S 2.3-2.4]{Ki}, and is referred to as a  
 {\em sheaf of Iwasawa modules}. 
 Jannsen introduced in \cite{Ja} the \'etale cohomology groups of  such 
inverse systems of sheaves, and proved the existence of a Hoschild-Serre spectral 
sequence, Gysin excision exact sequences and cycle map in this context. 
\end{remark}

\medskip\medskip\noindent
{\em Ordinary projections}.
Let 
\begin{equation}\label{ord-proj}
e^\ast := \lim_{n\rightarrow \infty} U_p^{*n!}
\end{equation}
 denote 
  Hida's 
(anti-)ordinary projector. 
 Since $U_p^\ast$ commutes with the push-forward maps $\varpi_{1 \ast}$, this idempotent 
operates on  $H^1_{\et}(\bar X_\infty^*,\Z_p)$.
While the structure   of the $\Lambda$-module
   $H^1_{\et}(\bar X_\infty^*,\Z_p)$ 
seems     rather  complicated,  a dramatic simplification 
occurs after passing to the quotient
$ e^\ast H^1_{\et}(\bar X_\infty^*,\Z_p)$, as the following classical theorem of Hida shows.

\begin{theorem} \cite[Corollaries 3.3 and 3.7]{Hida2}
\label{thm:hida-rep}
The Galois representation $ e^\ast H^1_{\et}(\bar X_\infty^*,\Z_p(1))$
is a free $\Lambda$-module.
For each $\nu_{\kk}  \in \cW$ with $\kk \ge 0$,  the  weight $k = \kk+2$ specialisation map
induces   
maps with bounded cokernel (independent of $k$) 
\begin{eqnarray*}
\label{eqn:weight-k-sheaf-spec} 
 \spec_k^* &: & e^\ast H^1_{\et}(\bar X_\infty^*,\Z_p(1))  \otimes_{\nu_{\kk}}  \Z_p \lra  e^\ast H^1_{\et}(\bar X_1, \cH^{\kk}(1)).
  \end{eqnarray*}

\end{theorem}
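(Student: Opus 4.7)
The plan is to follow Hida's original strategy, combining Shapiro's lemma with cohomological control and a Nakayama-type rigidity argument; I would organize the proof into three steps.

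Using the identification \eqref{eqn:crucial-relation-for-induced}, I first rewrite the module as $e^\ast H^1_{\et}(\bar X_1, \cL_\infty^*(1))$, with the natural $\Lambda$-action coming from the diamond operators on $\cL_\infty^*$. The key cohomological input is that, after applying the anti-ordinary projector $e^\ast$, the degree $0$ and degree $2$ \'etale cohomology of $\bar X_1$ with coefficients in $\cL_\infty^*(1)$ vanishes or has uniformly bounded order: in degree zero because the diamond action makes $U_p^\ast$ act topologically nilpotently on any global section not supported on the trivial component, and in degree two because $U_p^\ast$ acts on $H^2$ as $p$ times a unit, so $e^\ast H^2 = 0$. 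These vanishings are what allow $H^1$ to commute with base change along any specialization character of $\Lambda$.

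For the specialization map, I would then study the short exact sequence of pro-sheaves on $\bar X_1$ arising from the ideal $\ker(\nu_\kk)\subset \Lambda$,
$$0 \,\to\, (\ker\nu_\kk)\cL_\infty^* \,\to\, \cL_\infty^* \,\to\, \cL_\infty^*/(\ker\nu_\kk)\cL_\infty^* \,\to\, 0,$$
together with the natural sheaf map $\cL_\infty^*/(\ker\nu_\kk)\cL_\infty^* \to \cH^\kk$ induced by the $\kk$-th power symmetrization $Q \mapsto Q^\kk$ from \eqref{eqn:mom-bis}. Taking the long exact cohomology sequence, applying $e^\ast$, and invoking the vanishings above produces an isomorphism, up to bounded torsion, between $e^\ast H^1_{\et}(\bar X_\infty^\ast, \Z_p(1))\otimes_{\nu_\kk}\Z_p$ and $e^\ast H^1_{\et}(\bar X_1, \cL_\infty^*/(\ker\nu_\kk)\cL_\infty^*(1))$. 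Composing with the map induced by symmetrization, whose kernel and cokernel as sheaves are finite of exponent dividing a power of $p$ bounded in terms of $M$ alone, yields the claimed bounded cokernel.

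Freeness of $e^\ast H^1_{\et}(\bar X_\infty^\ast, \Z_p(1))$ over $\Lambda$ then follows by Nakayama: the module is finitely generated, since its specialization at $\nu_0$ is a finitely generated $\Z_p$-module, and its generic rank at every arithmetic $\nu_\kk$ equals twice the dimension of the ordinary subspace of weight $k$ cuspforms of tame level $M$, which is a constant in $k$ by Hida's finite-level control theorem. Combined with the bounded-cokernel control and the local structure of $\Lambda$, this rules out any nontrivial torsion submodule. The main obstacle is the second step, namely making the bounded-cokernel estimate \emph{uniform} in $k$: one must show that the sheaf discrepancy between $\cL_\infty^*/(\ker\nu_\kk)\cL_\infty^*$ and $\cH^\kk$ is annihilated by a power of $p$ depending only on $M$ and not on $\kk$. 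This uniformity is the technical heart of Hida's $\Lambda$-adic control theorem and ultimately rests on an analysis of the $\kk$-th power symmetrization map on $p^r$-torsion combined with the decomposition of the group-ring sheaf under the diamond action.
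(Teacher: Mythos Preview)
The paper does not give its own proof of this theorem: it is stated with a citation to \cite[Corollaries 3.3 and 3.7]{Hida2} and used as a black box. So there is no in-paper argument to compare your proposal against.

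That said, your sketch is a reasonable outline of Hida's method. The vanishing of $e^\ast H^0$ and $e^\ast H^2$ (because $U_p^\ast$ acts on both top and bottom cohomology of the tower with eigenvalue divisible by $p$), the long exact sequence coming from reducing $\cL_\infty^\ast$ modulo $\ker(\nu_\kk)$, and the Nakayama argument for freeness are exactly the ingredients Hida uses. Your identification of the main technical point---uniformity in $\kk$ of the discrepancy between $\cL_\infty^\ast/(\ker\nu_\kk)\cL_\infty^\ast$ and $\cH^\kk$---is also correct. One small caution: your justification for $e^\ast H^0 = 0$ (``diamond action makes $U_p^\ast$ act topologically nilpotently on any global section not supported on the trivial component'') is a bit vague; the cleaner argument is simply that $U_p^\ast = \pi_{1\ast}\pi_2^\ast$ has degree $p$ on $H^0$ and $H^2$ at each finite level, so $e^\ast$ annihilates both. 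With that adjustment your outline is sound, but since the paper defers entirely to Hida there is nothing further to compare.
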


\medskip\noindent
{\em Galois representations attached to Hida families}. 
 The Galois representation $\mathbb{V}_{\hphi}$ of Theorem \ref{Vphi-thm} associated by Hida and Wiles to a Hida family $\hphi$ of  tame level $M$ and character $\chi$
 can be realised as  a quotient of 
the $\Lambda$-module $e^\ast H^1_{\et}(\bar X_\infty^*,\Z_p(1))$. 
   More precisely, let
    $$ \xi_{\hphi}: {\mathbb T}_\Lambda \lra \Lambda_{\hphi}$$
    be the $\Lambda$-algebra homomorphism from the $\Lambda$-adic 
    Hecke algebra ${\mathbb T}_\Lambda$ to the $\Lambda$-algebra $\Lambda_{\hphi}$ generated by the fourier coefficients of $\hphi$ sending $T_\ell$ to $a_\ell(\hphi)$.
   
Then we have,  much as in \eqref{vpi}, a quotient map of $\Lambda$-adic Galois representations 
\begin{equation}
\label{Vphi-hida}
\varpi^\ast_{\hphi}:  e^\ast H^1_{\et}(\bar X_\infty^*,\Z_p(1)) \lra  
e^\ast H^1_{\et}(\bar X_\infty^*,\Z_p(1))\otimes_{\mathbb T_\Lambda, \xi_{\hphi}} \Lambda_\phi =: \mathbb{V}_{\hphi}(M), \end{equation} 
for which 
the following 
     diagram of ${\mathbb T}_\Lambda[G_\Q]$-modules  is commutative:
\begin{equation}\label{fit}
\xymatrix{
e^\ast H^1_{\et}(\bar X_\infty^*,\Z_p(1))   \ar[rr]^{\qquad\varpi^\ast_{\hphi}} \ar[d]^{\spec_{k}^*} & &
\mathbb{V}_{\hphi}(M) \ar[d]^{x} \\
e^\ast H^1_{\et}(\bar X_1,\cH^{\kk}(1)) \ar[rr]^{\qquad  \varpi_{\hphi_x}} &  & V_{\hphi_x}(Mp), }
\end{equation}
for all  arithmetic points  $x$ of $\cW_{\hphi}$ of weight $k = \kk+2$ and trivial character.

As in  \eqref{vpi}, $\mathbb{V}_{\hphi}(M)$ is non-canonically isomorphic to a finite direct sum of 
 copies of a $\Lambda_{\hphi}[G_\Q]$-module $\mathbb{V}_{\hphi}$ of rank $2$ over $\Lambda_{\hphi}$, satisfying the properties stated in Theorem \ref{Vphi-thm}. 
 

 
One can of course work alternatively with the ordinary projection $e  := \lim_{n\rightarrow \infty} U_p^{n!}$ rather than the anti-ordinary one, in which case one similarly constructs a quotient map of $\Lambda$-adic Galois representations 
\begin{equation}
\label{Vphi-hida*}
\varpi_{\hphi}: e H^1_{\et}(\bar X_\infty,\Z_p(1)) := e  \invlim_{\ \ \varpi_{2*}}  H^1_{\et}(\bar X_r,\Z_p(1)) \lra  
\mathbb{V}_{\hphi}(M). \end{equation}

\subsection{Families of Dieudonn\'e modules}
\label{subsec:Dieu}

Let $\mathbf{B}_{\dR}$ denote Fontaine's field of de Rham periods, $\mathbf{B}_{\dR}^+$ be its ring of integers and $\log[\zeta_{p^{\infty}}]$ denote the uniformizer of $\mathbf{B}_{\dR}^+$ associated to a norm-compatible system $\zeta_{p^{\infty}} = \{ \zeta_{p^n}\}_{n\geq 0}$ of $p^n$-th roots of unity.
 (cf.\,e.g.\cite[\S 1]{BK}). 
For any finite-dimensional de Rham Galois representation $V$ of $G_{\Q_p}$ with coefficients in a finite extension $L_p/\Q_p$, define the de Rham Dieudonn\'e module
$$
D(V) = (V\otimes \mathbf{B}_{\dR})^{G_{\Q_p}}.
$$
It is an $L_p$-vector space of the same dimension as $V$,
 equipped  with a descending exhaustive filtration $\Fil^j D(V) = (V\otimes \log^j[\zeta_{p^{\infty}}]\mathbf{B}^+_{\dR})^{G_{\Q_p}}$ by $L_p$-vector subspaces.

Let $\mathbf{B}_{\cris} \subset \mathbf{B}_{\dR}$ denote Fontaine's ring of crystalline $p$-adic periods. 
If $V$ is crystalline (which is always the case if it arises as a subquotient of the \'etale cohomology of an algebraic variety with good reduction at $p$), then there is a canonical isomorphism
$$
D(V) \simeq  (V\otimes \mathbf{B}_{\cris})^{G_{\Q_p}},
$$
which furnishes $D(V)$ with a linear action of a Frobenius endomorphism $\Phi$. 

In \cite{BK} Bloch and Kato introduced a collection of subspaces of the local Galois cohomology group $H^1(\Q_p,V)$, denoted respectively 
$$
H^1_{\mathrm{e}}(\Q_p,V) \subseteq H^1_{\fin}(\Q_p,V) \subseteq H^1_{\mathrm{g}}(\Q_p,V) \subseteq H^1(\Q_p,V), 
$$ 
and constructed homomorphisms 
\begin{equation}\label{logBK}
 \log_{\mathrm{BK}}: H^1_{\mathrm{e}}(\Q_p,V) \stackrel{\sim}{\lra} D(V)/\big(\Fil^0D(V) + D(V)^{\Phi=1}\big)
 \end{equation}
and
\begin{equation}\label{expBK}
 \exp^*_{\mathrm{BK}}: H^1(\Q_p,V)/H^1_{\mathrm{g}}(\Q_p,V) \stackrel{\sim}{\lra} \Fil^0 D(V)
 \end{equation}
that are usually referred to as the Bloch-Kato logarithm and dual exponential map.

We illustrate the above Bloch-Kato homomorphisms with a few basic examples that shall be used several times in the remainder of this article.

\begin{example}\label{H1f}
As shown e.g.\,in \cite{BK}, \cite[\S 2.2]{Be}, for any unramified character $\psi$ of $G_{\Q_p}$ and all $n\in \Z$ we have:
\begin{enumerate}
\item[(a)] If $n \geq 2$, or $n=1$ and $\psi\ne 1$, then $H_{\mathrm{e}}^1(\Q_p,L_p(\psi \varepsilon_{\cyc}^n))  = H^1(\Q_p,L_p(\psi \varepsilon_{\cyc}^n))$ is one-dimensional over $L_p$ and the Bloch-Kato logarithm induces an isomorphism
$$
 \log_{\mathrm{BK}}: H^1(\Q_p,L_p(\psi \varepsilon_{\cyc}^n)) \stackrel{\sim}{\lra} D(L_p(\psi \varepsilon_{\cyc}^n)).
$$

\item[(b)] If $n < 0$, or $n=0$ and $\psi\ne 1$, then $H_{\mathrm{g}}^1(\Q_p,L_p(\psi \varepsilon_{\cyc}^n))=0$ and $H^1(\Q_p,L_p(\psi \varepsilon_{\cyc}^n))$ is one-dimensional. The dual exponential gives rise to an isomorphism
$$
\exp^*_{\mathrm{BK}}: H^1(\Q_p,L_p(\psi \varepsilon_{\cyc}^n))  \stackrel{\sim}{\lra} \Fil^0 D(L_p(\psi \varepsilon_{\cyc}^n)) = D(L_p(\psi \varepsilon_{\cyc}^n)).
$$

\item[(c)] Assume $\psi=1$. If $n=0$, then $H^1(\Q_p,L_p)$ has dimension $2$ over $L_p$, $H^1_{\fin}(\Q_p,L_p)=H^1_{\mathrm{g}}(\Q_p,L_p)$ has dimension $1$  and $H^1_{\mathrm{e}}(\Q_p,L_p)$ has dimension $0$ over $L_p$. The Bloch-Kato dual exponential map induces an isomorphism 
$$
 \exp^*_{\mathrm{BK}}: H^1(\Q_p,L_p)/H^1_{\mathrm{f}}(\Q_p,L_p) \stackrel{\sim}{\lra} \Fil^0 D(L_p) = D(L_p) = L_p.
$$
Class field theory identifies $H^1(\Q_p,L_p)$ with $\mathrm{Hom}_{\mathrm{cont}}(\Q_p^\times,\Q_p)\otimes L_p$, which is spanned by the homomorphisms $\ordi_p$ and $\log_p$. 


If $n=1$, then $H^1(\Q_p,L_p(1))=H^1_{\mathrm{g}}(\Q_p,L_p(1)) $ is $2$-dimensional and $H^1_{\fin}(\Q_p,L_p(1))=H^1_{\mathrm{e}}(\Q_p,L_p(1))$ has dimension $1$  over $L_p$. As proved e.g.\,in \cite[Prop. 2.9]{Be}, Kummer theory identifies the spaces $H_{\fin}^1(\Q_p,L_p(1)) \subset H^1(\Q_p,L_p(1))$ with $\Z_p^\times  \hat\otimes L_p$ sitting inside $\Q_p^\times \hat\otimes L_p$. Under this identification, the Bloch-Kato logarithm is the usual $p$-adic logarithm on $\Z_p^\times$. 

\end{enumerate}
\end{example}

Let $\hat\Z_p^{\nr}$ denote the ring of integers of the completion of the maximal unramified extension of $\Q_p$. If $V$ is unramified then there is a further canonical isomorphism
\begin{equation}\label{DV}
D(V) \simeq  (V\otimes \hat\Z_p^{\nr})^{G_{\Q_p}}.
\end{equation}

Let  $\phi$ be an eigenform (with respect to the good Hecke operators) of weight $k = \kk+2\geq 2$, level $M$ and character $\chi$, with fourier coefficients in a finite extension $L_p$ of $\Q_p$.
The comparison theorem \cite{Fal} of Faltings-Tsuji  combined with \eqref{Vphi-class} asserts that there is a natural isomorphism
$$
D(V_{\phi}(M)) \simeq H^1_{\dR}(X_1(M),\cH^{\kk}(1))[\phi]
$$
of Dieudonn\'e modules over $L_p$. Note that $D(V_{\phi}(M))$ is the direct sum of several copies of the two-dimensional Dieudonn\'e module $D(V_{\phi})$.

Assume that $p\nmid M$ and $\phi$ is ordinary at $p$. Then $V_\phi(M)$ is crystalline and $\Phi$ acts on $D(V_{\phi}(M))$ as 
\begin{equation}\label{Phi=Up}
\Phi 
 = \chi(p) p^{\kk+1} U_p^{-1}.
\end{equation}
In particular the eigenvalues of $\Phi$ on $D(V_{\phi}(M))$ are $\chi(p) p^{\kk+1} \alpha^{-1}_\phi = \beta_\phi$ and $\chi(p) p^{\kk+1} \beta^{-1}_\phi = \alpha_\phi$, the two roots of the Hecke polynomial of $\phi$ at $p$. For future reference, recall from \cite[Theorem 1.3]{DR1} the Euler factors
 \begin{equation}\label{E0E1}
\cE_0(\phi)  :=  1- \chi^{-1}(p)  \beta_{\phi}^2  p^{1-k} = 1-\frac{\beta_{\hphi_x^\circ}}{\alpha_{\hphi_x^\circ}}, \quad
\cE_1(\phi)  :=  1-  \chi(p)  \alpha_{\phi}^{-2} p^{k-2}.
\end{equation}

Let $\phi^*=\phi\otimes \bar\chi\in S_k(M,\bar\chi)$ denote the twist of $\phi$ by the inverse of its nebentype character.
Poincar\'e duality induces a perfect pairing
$$
\langle \,,\rangle: D(V_{\phi}(M)) \times D(V_{\phi^*}(M)) \lra D(L_p)=L_p.
$$ 

The exact sequence  \eqref{dec} induces in this setting an exact sequence of Dieudonn\'e modules
\begin{equation}\label{dec2}
0 \lra D(V_\phi^+(M)) \stackrel{i}{\lra} D(V_\phi(M)) \stackrel{\pi}{\lra} D(V_\phi^-(M)) \lra 0.
\end{equation}
Since $V_\phi^-(M$ is unramified, we have $D(V_\phi^-(M)) \simeq  (V_\phi^-(M)\otimes \hat\Z_p^{\nr})^{G_{\Q_p}}$. This submodule  may also be characterized as the eigenspace $D(V_\phi^-(M)) = D(V_\phi(M))^{\Phi=\alpha_\phi}$ of eigenvalue $\alpha_\phi$ for the action of frobenius.

The rule $\testphi \mapsto \omega_{\testphi}$ that attaches to a modular form its associated differential form gives rise to an isomorphism $S_k(M,\chi)_{L_p}[\phi] \stackrel{\sim}{\lra} \Fil^0(D(V_\phi(M))) \subset D(V_\phi(M))$. Moreover, the map $\pi$ of \eqref{dec2} induces an isomorphism 
\begin{equation}\label{isoFil0}
S_k(M,\chi)_{L_p}[\phi] \stackrel{\sim}{\lra} \Fil^0(D(V_\phi(M))) \stackrel{\pi}{\lra} D(V_\phi^-(M)).
\end{equation}
Any element $\omega\in D(V_{\phi^*}^-(M))$ gives rise to a linear map
$$
\omega:  D(V_{\phi}^+(M))  \, \lra \, L_p, \quad \eta \mapsto  \langle \eta,\pi^{-1}(\omega) \rangle.
$$
Similarly, any $\eta \in D(V_{\phi^*}^+(M))$ may be identified with a linear functional
$$
\eta:  D(V_{\phi}^-(M)) \, \lra \, L_p, \quad  \omega \mapsto \langle \pi^{-1}(\omega),\,\eta \rangle,
$$
and given $\testphi \in S_k(M,\chi)_{L_p}[\phi]$ we set $\eta_{\testphi}: D(V_{\phi^*}^-(M)) \ra L_p,$ $\varphi \mapsto \eta_{\testphi}(\varphi) = \frac{\langle \testphi,\varphi\rangle}{\langle \testphi,\testphi^*\rangle}$.

Let now $\tilde\Lambda$ be a finite flat extension of the Iwasawa algebra $\Lambda$ and let $\hU$ denote a free $\tilde\Lambda$-module of finite rank equipped with an {\em unramified} $\tilde\Lambda$-linear action of $G_{\Q_p}$. Define the $\Lambda$-adic Dieudonn\'e module
$$ \hD(\hU) :=  (\hU \hat\otimes \hat\Z_p^{\nr})^{G_{\Q_p}}.$$
As shown in e.g.\,\cite[Lemma 3.3]{Oc1}, $\hD(\hU)$ is a free module over $\tilde\Lambda$ of the same rank as $\hU$.

Examples of such $\Lambda$-adic Dieudonn\'e modules arise naturally in the context of families of modular forms thanks to Theorem \ref{Vphi-thm}. Indeed, let $\hphi$ be  a Hida family of tame level $M$ and character $\chi$, and let $\hphi^*$ denote the $\Lambda$-adic modular form obtained by twisting $\hphi$ by $\bar\chi$. 

Let $\hV_{\hphi}$ and $\hV_{\hphi}(M)$ denote the global $\Lambda$-adic Galois representations described in \eqref{Vphi-hida}.  It follows from \eqref{Wiles-ord} that to the restriction  of  $\hV_{\hphi}$ to $G_{\Q_p}$ one might associate two natural unramified $\Lambda[G_{\Q_p}]$-modules of rank one, namely
$$
\hV_{\hphi}^-\simeq \Lambda_{\hphi}(\psi_{\hphi}) \quad \mbox{and} \quad \mathbb{U}^+_{\hphi}=\mathbb{V}^+_{\hphi}(\chi^{-1} \varepsilon_{\cyc} \underline{\varepsilon}_{\cyc}^{-1}).
$$
Define similarly the unramified modules $\hV_{\hphi}^-(M)$ and $\mathbb{U}^+_{\hphi}(M)$.

Let
\begin{equation}
\label{eqn:hf-isotypic}
S^{\ordi}_{\Lambda}(M,\chi)[\hphi] := \left\{ \testhphi\in S^{\ordi}_{\Lambda}(M,\chi) \quad \mbox{s.t.} \quad 
\left|\begin{array}{ll}
T_\ell \testhphi = a_\ell(\hphi) \testhphi,  & \forall \ell\nmid Mp, \\
U_p \testhphi = a_p(\hphi) \testhphi & 
\end{array} \right. \right\}, 
\end{equation}

For any crystalline arithmetic point $x\in \cW_{\hphi}^\circ$ of weight $k$, the specialization of a $\Lambda$-adic test vector $\testphi\in S^{\ordi}_{\Lambda}(M,\chi)[\hphi]$ at $x$ is a classical eigenform $\testphi_x\in S_k(Mp,\chi)$ with coefficients in $L_p=x(\Lambda_{\hphi})\otimes \Q_p$ and the same eigenvalues as $\phi_x$ for the good Hecke operators. 

Likewise, define
\begin{equation*}
\label{eqn:hf-isotypic2}
S^{\ordi}_{\Lambda}(M,\bar\chi)^\vee[\hphi]=\left\{ \mathbf{\eta}:  S^{\ordi}_{\Lambda}(M,\bar\chi) \ra \Lambda_{\hphi}  \left|\begin{array}{ll}
 \mathbf{\eta} \circ T_\ell^*  = a_\ell(\hphi)  \mathbf{\eta},  & \forall \ell\nmid Mp, \\
 \mathbf{\eta} \circ U_p^*  = a_p(\hphi)   \mathbf{\eta} &   
\end{array} \right. \right\}
\end{equation*}

Let $\mathcal{Q}_{\hphi}$ denote the field of fractions of $\Lambda_{\hphi}$. Associated to any test vector $\testhphi\in S^{\ordi}_{\Lambda}(M,\chi)[\hphi]$, \cite[Lemma 2.19]{DR1} describes a $\mathcal{Q}_{\hphi}$-linear dual test vector
\begin{equation}\label{dualtest}
\testhphi^\vee \in S^{\ordi}_{\Lambda}(M,\bar{\chi})^\vee[\hphi]\,\hat\otimes \,\mathcal{Q}_{\hphi}
\end{equation}
such that for any $\boldsymbol{\varphi}\in  S^{\ordi}_{\Lambda}(M,\bar\chi)$ and any point $x\in \cW_{\hf}^\circ$, 
$$
x(\testhphi^\vee(\boldsymbol{\varphi})) =  \frac{\langle \testhphi_x, \boldsymbol{\varphi}_x\rangle} {\langle \testhphi_x,\testhphi^*_x \rangle}
$$ 
where $\langle \, ,\rangle$ denotes the pairing induced by Poincar\'e duality on the modular curve associated to the congruence subgroup $\Gamma_1(M)\cap \Gamma_0(p)$.  This way, the specialization of a $\Lambda$-adic dual test vector $\testhphi^\vee\in S^{\ordi}_{\Lambda}(M,\bar\chi)^\vee[\hphi]$ at $x$ gives rise to a linear functional $$\eta_{\testhphi_x}: S_k(Mp,\bar\chi)[\phi^*_x] \lra L_p.$$

A natural $\mathcal{Q}_{\hf}$-basis of $S^{\ordi}_{\Lambda}(M,\chi)[\hphi]\,\hat\otimes \, \mathcal{Q}_{\hphi}$ is given by the $\Lambda$-adic modular forms $\hphi(q^d)$ as $d$ ranges over the positive divisors of $M/M_{\hphi}$ and it is also obvious that $\{ \hphi(q^d)^\vee:  d\mid \frac{M}{M_{\hphi}} \}$ provides a $\mathcal{Q}_{\hphi}$-basis of $S^{\ordi}_{\Lambda}(M,\bar{\chi})^\vee[\hphi] \,\hat\otimes \, \mathcal{Q}_{\hphi}$.

The following statement shows that the linear maps described above can be made to vary in families. 

\begin{proposition}\label{klz-prop}  For any $\Lambda$-adic test vector $\testhphi\in S_{\Lambda}^{\ordi}(M,\chi)[\hphi]$ there exist homomorphisms of $\Lambda_{\hphi}$-modules
$$
\omega_{\testhphi}: D(\mathbb{U}^+_{\hphi^*}(M)) \, \lra \, \Lambda_{\hphi}, \quad \eta_{\testhphi}: D(\mathbb{V}_{\hphi^*}^-(M)) \lra \mathcal{Q}_{\hphi},
$$
whose specialization at a classical point $x\in \cW_{\hphi}^\circ$ such that $\phi_x$ is the ordinary stabilization of an eigenform $\hphi^\circ_x$ of level $M$ are, respectively

\begin{enumerate}

\item $x\circ \omega_{\testhphi} \, = \, \cE_0(\hphi^\circ_x)  \,e \, \varpi_1^*(\omega_{\testhphi^\circ_x})$ as functionals on $D(U^+_{\hphi^*_x}(Mp))$.

\item $
x\circ \eta_{\testhphi} 
=  \frac{1}{\cE_1(\hphi^\circ_x)} \cdot e \, \varpi_1^* (\eta_{\testhphi_x^{\circ}})$ 
as functionals on $D(V^-_{\hphi^*_x}(Mp))$.

\end{enumerate}
\end{proposition}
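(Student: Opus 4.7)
The plan is to construct the two $\Lambda$-adic functionals by interpolating their classical level-$M$ counterparts across the Hida family, using the $\Lambda$-adic Dieudonn\'e module formalism of \cite[Lemma 3.3]{Oc1} together with the quotient map \eqref{Vphi-hida}, and then to verify the specialization identities by a direct calculation of how the ordinary projector $e$ interacts with the pullback $\varpi_1^\ast$ from level $M$ to level $Mp$. The Euler factors $\cE_0(\hphi_x^\circ)$ and $\cE_1(\hphi_x^\circ)^{-1}$ will arise precisely as the discrepancy between a level-$M$ newform and its ordinary $p$-stabilization at level $Mp$.

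For the construction of $\omega_{\testhphi}$, since $\mathbb{U}^+_{\hphi^\ast}(M)$ is a free unramified $\Lambda_{\hphi}$-module of finite rank, its $\Lambda$-adic Dieudonn\'e module is a free $\Lambda_{\hphi}$-module of the same rank. Under the $\Lambda$-adic comparison between ordinary $\Lambda$-adic \'etale cohomology and $\Lambda$-adic de Rham cohomology, the test vector $\testhphi\in S^{\ordi}_{\Lambda}(M,\chi)[\hphi]$ determines a canonical class $\boldsymbol\omega_{\testhphi}$ in the $\Fil^0$ part of the $\Lambda$-adic Dieudonn\'e module of $\hV_{\hphi}(M)$, whose specialization at a crystalline point $x$ recovers the classical de Rham class of $\testhphi_x$ via \eqref{isoFil0}. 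Composing the projection $\hV_{\hphi}(M) \twoheadrightarrow \hV_{\hphi}^-(M)$ from \eqref{Wiles-ord} with the $\Lambda$-adic Poincar\'e duality pairing between $\hD(\mathbb{U}^+_{\hphi^\ast}(M))$ and $\hD(\hV_{\hphi}^-(M))$ produces the required $\omega_{\testhphi}$. The functional $\eta_{\testhphi}$ is constructed by applying the same template to the dual test vector $\testhphi^\vee$ of \eqref{dualtest}: one obtains a class in $\Fil^0\hD(\hV_{\hphi^\ast}(M))\hat\otimes \mathcal{Q}_{\hphi}$ and pairs it against $\hD(\hV_{\hphi^\ast}^-(M))$.

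The specialization is then verified by combining the commutative diagram \eqref{fit} with compatibility of the $\Lambda$-adic de Rham comparison with its classical counterpart in \eqref{dec2}. At a crystalline point $x$, the class $x\circ\boldsymbol\omega_{\testhphi}$ is represented by the specialization $\testhphi_x$, which is the $p$-ordinary stabilization of the newform $\testhphi_x^\circ$ of level $M$. The relation with the level-$M$ classical object $\omega_{\testhphi_x^\circ}$ is then governed by the standard identity
\[
e\,\varpi_1^\ast \omega_{\testhphi_x^\circ} \;=\; \bigl(1-\beta_{\hphi_x^\circ}/\alpha_{\hphi_x^\circ}\bigr)\,\omega_{\testhphi_x} \;=\; \cE_0(\hphi_x^\circ)\,\omega_{\testhphi_x},
\]
which arises from the eigenvalues $\alpha_{\hphi_x^\circ}, \beta_{\hphi_x^\circ}$ of $U_p$ acting on $\varpi_1^\ast\omega_{\hphi_x^\circ}$ together with the definition of $e$. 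For $\eta_{\testhphi}$, the characterization of the dual test vector in \cite[Lemma 2.19]{DR1} together with the compatibility of Poincar\'e duality at levels $M$ and $Mp$ yields the reciprocal Euler factor $\cE_1(\hphi_x^\circ)^{-1}$.

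The main technical obstacle is the precise bookkeeping needed to establish the $\Lambda$-adic de Rham comparison with all its auxiliary structures, namely the ordinary filtration and Poincar\'e duality, integrally and in a form compatible with the classical Faltings-Tsuji isomorphism at every crystalline specialization, and then to verify that the Euler factor produced in the $\eta$ case is exactly $\cE_1(\hphi_x^\circ)^{-1}$ rather than some variant combination of $\alpha_{\hphi_x^\circ}$ and $\beta_{\hphi_x^\circ}$. This latter verification relies critically on the interpretation of $\testhphi^\vee$ given in \cite[Lemma 2.19]{DR1}, which itself encodes the Petersson-type normalization $\langle\testhphi_x,\testhphi_x^\ast\rangle$ determining the classical functional $\eta_{\testhphi_x^\circ}$.
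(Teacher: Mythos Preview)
Your treatment of $\omega_{\testhphi}$ matches the paper's argument: both construct the $\Lambda$-adic differential via the $\Lambda$-adic de Rham/\'etale comparison (the paper cites \cite[Prop.~10.1.2]{KLZ} and \cite{Oh} for this) and both verify the specialization via the identity $e\,\varpi_1^\ast\omega_{\testhphi_x^\circ} = (1-\beta/\alpha)\,\omega_{\testhphi_x} = \cE_0(\hphi_x^\circ)\,\omega_{\testhphi_x}$, obtained by decomposing $\varpi_1^\ast \testhphi_x^\circ$ into its ordinary and non-ordinary $p$-stabilizations.

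For $\eta_{\testhphi}$ the paper's route is different from yours, and your proposal has a gap. The paper does not build $\eta_{\testhphi}$ from the dual test vector $\testhphi^\vee$; instead it invokes \cite[Prop.~10.1.2]{KLZ} to obtain a $\Lambda$-adic functional $\tilde\eta_{\testhphi}$ whose specialization at $x$ is $\mathrm{Pr}^{\alpha\ast}\eta_{\testhphi_x^\circ}\big/\bigl(\lambda(\hphi_x^\circ)\,\cE_0(\hphi_x^\circ)\,\cE_1(\hphi_x^\circ)\bigr)$, where $\lambda(\hphi_x^\circ)$ is the Atkin--Lehner pseudo-eigenvalue \eqref{pseudo}. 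The crucial extra input is \cite[Prop.~10.1.1]{KLZ}, which shows (using that $\Lambda_{\hphi}$ contains the $M$-th roots of unity) that these pseudo-eigenvalues interpolate to an element $\lambda(\hphi)\in\Lambda_{\hphi}$; one then sets $\eta_{\testhphi}:=\lambda(\hphi)\,\tilde\eta_{\testhphi}$ and the factor $\cE_1^{-1}$ drops out after the same $\cE_0$ cancellation as for $\omega$. Your proposal never mentions the pseudo-eigenvalue, and your claim that ``one obtains a class in $\Fil^0\hD(\hV_{\hphi^\ast}(M))$'' is misstated: $\testhphi^\vee$ is a linear functional on the $\hphi^\ast$-isotypic space of cusp forms, hence under the de Rham comparison it yields a functional \emph{on} $\Fil^0$ (equivalently on $\hD(\hV_{\hphi^\ast}^-)$), not a class \emph{in} it. More seriously, the assertion that ``compatibility of Poincar\'e duality at levels $M$ and $Mp$ yields $\cE_1^{-1}$'' is exactly where the substantive work lies---this is the comparison of Petersson norms across levels that forces the Atkin--Lehner operator, and hence the pseudo-eigenvalue, into the picture. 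Without either invoking \cite{KLZ} or reproducing that computation, your argument for part (2) is incomplete.
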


\begin{proof} This is essentially a reformulation of \cite[Propositions 10.1.1 and 10.1.2]{KLZ}, which in turn builds on \cite{Oh}. Namely, the first claim in Prop.\,10.1.2 of loc.\,cit.\,asserts that $\omega_{\testhphi}$ exists such that at any $x\in \cW_{\hphi}^\circ$ as above, $x\circ \omega_{\testhphi} \, = \omega_{\testhphi_x} = \mathrm{Pr}^{\alpha *}(\omega_{\testphi_x^\circ})$ where $\mathrm{Pr}^{\alpha *}$ is the map defined in \cite[10.1.3]{KLZ} sending $\testphi_x^\circ$ to its ordinary $p$-stablilization $\testphi_x$. Note that $\varpi_1^*(\phi_x^\circ) = \frac{\alpha_{\phi_x^\circ} \testphi_x}{ \alpha_{\phi_x^\circ} - \beta_{\phi_x^\circ}}  -  \frac{\beta_{\phi_x^\circ} \testphi'_x}{\alpha_{\phi_x^\circ}- \beta_{\phi_x^\circ}} $, where $\testphi'_x$ denotes the non-ordinary specialization of $\testphi_x^\circ$. Since $e  \omega_{\testphi'_x} = 0$ and $\cE_0(\hphi^\circ_x)  = \frac{ \alpha_{\phi_x^\circ} - \beta_{\phi_x^\circ}}{\alpha_{\phi_x^\circ}}$ the claim follows.

The second part of \cite[Proposition 10.1.2]{KLZ} asserts that there exists a $\Lambda$-adic functional $\tilde\eta_{\testhphi}$ such that for all $x$ as above:
$$
x\circ  \tilde\eta_{\testhphi} = \frac{\mathrm{Pr}^{\alpha *} \eta_{\testhphi^\circ_x}}{ \lambda(\hphi^\circ_x) \cE_0(\hphi^\circ_x) \cE_1(\hphi^\circ_x)}
$$
as $L_p$-linear functionals on $D(V^-_{\hphi^*_x}(Mp))$. Here  $\lambda(\hphi^\circ_x)\in \bar\Q^\times$ denotes the pseudo-eigenvalue of $\hphi^\circ_x$, which we recall is the scalar given by
\begin{equation}\label{pseudo}
W_M(\hphi^\circ_x) = \lambda(\hphi^\circ_x) \cdot \hphi^{\circ*}_x,
\end{equation}
where $W_M: S_{k}(M,\chi) \ra S_{k}(M,\chi^{-1})$ stands for the Atkin-Lehner operator. Since we are assuming that $\Lambda_{\hphi}$ contains the $M$-th roots of unity (cf.\,the remark right after Definition \ref{def-Hida-fam}), Prop. 10.1.1\,of loc.\,cit.\,shows that there exists an element $\lambda(\hphi)\in \Lambda_{\hphi}$ interpolating the pseudo-eigenvalues of the classical $p$-stabilized specializations of $\hphi$. The claim follows by taking $\eta_{\testhphi} = \lambda(\hphi) \tilde\eta_{\testhphi}$. The same argument as above yields that for all $x$ as above,  $x\circ \eta_{\testhphi} = \cE_0(\hphi^\circ_x) \frac{ e \varpi_1^*\eta_{\testhphi^\circ_x}}{\cE_0(\hphi^\circ_x) \cE_1(\hphi^\circ_x)}$, which amounts to the statement of the proposition.
 \end{proof}


\section{Generalised Kato classes}
\label{sec:GKC}

\subsection{A compatible collection of cycles}
This section  defines a collection of codimension two cycles in
 $X_1(Mp^r)^3$ indexed by elements of $(\Z/p^r\Z)^{\times 3}$ and records some of their 
 properties.
 
 We retain the notations that were in force in Section \ref{subsec:hida} regarding the meanings of the curves $X=X_1(M)$,
 $X_r= X_1(Mp^r)$ and  $X_{r,s}$.
 In addition,  let 
 $$\YY(p^r) := Y\times_{X(1)} Y(p^r), \quad \XX(p^r) := X\times_{X(1)} X(p^r)$$ 
  denote the (affine and projective, respectively) modular curve over $\Q(\zeta_r)$ with full level $p^r$ structure.
 The curve $\YY(p^r)$ classifies 
triples  $(A, P,Q)$  in which $A$ is an elliptic curve with $\Gamma_1(M)$ level structure and
  $(P,Q)$ is a basis 
for $A[p^r]$  satisfying
$\langle P,Q \rangle = \zeta_r$, 
where  $\langle \ , \rangle $ denotes the Weil pairing  and   $\zeta_r$ is a fixed primitive $p^r$-th root of unity.
 The curve  $\XX(p^r)$ is geometrically connected   but does not descend to a curve over $\Q$,
 as can be seen by noting that the description of its moduli problem depends on the choice of $\zeta_r$. The covering 
  $\XX(p^r)/X$  
is  Galois with Galois group
$\SL_2(\Z/p^r\Z)$, acting  on the left by the rule
\begin{equation}
\label{eqn:galois-auts}
  \left(\begin{array}{cc} a & b \\ c & d\end{array}\right)  (A, P, Q) = 
 (A, aP+b Q,  cP+d Q).
 \end{equation} 
Consider the natural projection map
\begin{equation}
\label{eqn:varpi1r3}
\varpi_1^r \times \varpi_1^r \times \varpi_1^r:   X_r^3 \lra X^3 
\end{equation}
  induced on triple products
  by the map $\varpi_1^r$ 
of \eqref{eqn:def-varpi1}.
Write $\Delta\subset X^3$ for the usual diagonal cycle, namely the image of $X$ under the diagonal embedding $x \mapsto (x,x,x)$.
Let 
$ \Delta_r $ be the fiber product $\Delta\times_{X^3} X_r^3$ via the natural inclusion and the map 
of \eqref{eqn:varpi1r3}, 
which fits into the cartesian diagram
$$  \xymatrix{
 \Delta_r \ar@{^{(}->}[r]  \ar@{>>}[d] & X_r^3 \ar@{>>}[d] \\
 \Delta \ar@{^{(}->}[r] & X^3. }
 $$ 
 An element of a  $\Z_p$-module  $\Omega$ is said to be
 {\em primitive} if it does not belong to 
 $p\Omega$, and the set of such primitive elements is denoted $\Omega'$.
 Let 
 $$\Sigma_r := ((\Z/p^r\Z \times \Z/p^r\Z)')^3  \subset  ((\Z/p^r\Z)^2)^3 $$ 
 be   the set of  triples of  primitive row 
  vectors of length $2$
 with entries in 
$\Z/p^r\Z$, equipped with the    action of $\GL_2(\Z/p^r\Z)$ acting diagonally
by right multiplication.

  \begin{lemma}
  \label{lemma:geometric-components}
 The geometrically irreducible components of $\Delta_r$ are defined over $\Q(\zeta_r)$ 
 and are in canonical   bijection with the  set of left orbits
 $$  \Sigma_r/\SL_2(\Z/p^r\Z).$$
 \end{lemma}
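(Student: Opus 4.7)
The plan is to base-change the diagonal $\Delta_r$ along the total level-$p^r$ cover $\XX(p^r)\to X$ so as to trivialize $A[p^r]$, and then read off the geometric components of $\Delta_r$ from the resulting Galois action of $\SL_2(\Z/p^r\Z)$.

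First I would identify a geometric point of $\Delta_r=\Delta\times_{X^3}X_r^3$ with a tuple $(A,P_M,P_1,P_2,P_3)$, where $(A,P_M)$ is a generalized elliptic curve with $\Gamma_1(M)$-structure and each $P_i$ is a point of exact order $p^r$ on $A$. For every primitive vector $v=(a,b)\in((\Z/p^r\Z)^2)'$, the formula $(A,P_M,P,Q)\mapsto (A,P_M,aP+bQ)$ gives a morphism $\XX(p^r)\to X_r$ over $X$, and taking three such constructs, for each $v=(v_1,v_2,v_3)\in\Sigma_r$, a $\Q(\zeta_r)$-morphism $\iota_v:\XX(p^r)\to\Delta_r$. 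This produces a natural candidate for every geometric component of $\Delta_r$.

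The key step is then to assemble these $\iota_v$ into a Galois description. Base-changing $\Delta_r$ along $\XX(p^r)\to X$ produces
$$
\tilde\Delta_r \;:=\; \XX(p^r)\times_X\Delta_r \;=\; \XX(p^r)\times_{X^3}X_r^3 \;\simeq\; \XX(p^r)\times\Sigma_r,
$$
the last identification coming from the universal basis of $A[p^r]$ over $\XX(p^r)$, which identifies each factor $\XX(p^r)\times_X X_r$ with $\XX(p^r)\times((\Z/p^r\Z)^2)'$. Since $\XX(p^r)\to X$ is Galois with group $\SL_2(\Z/p^r\Z)$, so is $\tilde\Delta_r\to\Delta_r$ (generically), giving $\Delta_r\simeq(\XX(p^r)\times\Sigma_r)/\SL_2(\Z/p^r\Z)$. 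A direct computation using the action formula \eqref{eqn:galois-auts} yields $\iota_v(gx)=\iota_{vg}(x)$, which identifies the Galois action on the source with the diagonal action $g\cdot(x,v)=(gx,vg^{-1})$, matching the right action of $\SL_2(\Z/p^r\Z)$ on $\Sigma_r$ that appears in the statement.

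From here the conclusion would follow. Because $\XX(p^r)$ is geometrically irreducible over $\Q(\zeta_r)$, the geometric components of $\tilde\Delta_r$ are the slices $\XX(p^r)\times\{v\}$ with $v\in\Sigma_r$, all defined over $\Q(\zeta_r)$. Passing to the quotient by $\SL_2(\Z/p^r\Z)$ groups them into components of $\Delta_r$ indexed canonically by $\Sigma_r/\SL_2(\Z/p^r\Z)$; each such component is isomorphic to $\XX(p^r)/\mathrm{Stab}(v)$, which is geometrically irreducible (quotient of a geometrically irreducible curve by a finite group) and defined over $\Q(\zeta_r)$ (since $\mathrm{Stab}(v)\subset\SL_2(\Z/p^r\Z)$ acts algebraically over $\Q(\zeta_r)$). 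The main point to handle with care is pinning down the left/right conventions for the $\SL_2$-action so as to match the right action on $\Sigma_r$ described in the statement; once the conventions are fixed, the geometric irreducibility and field of definition of each component are immediate from those of $\XX(p^r)$.
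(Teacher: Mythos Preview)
Your proposal is correct and follows essentially the same approach as the paper: both construct, for each $v\in\Sigma_r$, a morphism $\XX(p^r)\to\Delta_r$ over $\Q(\zeta_r)$ and verify the equivariance $\iota_{vg}=\iota_v\circ g$ under $\SL_2(\Z/p^r\Z)$. The only difference is packaging: the paper argues directly that the images depend only on the orbit and then checks by hand (using transitivity of $\SL_2$ on primitive vectors) that these images are disjoint and cover $\Delta_r$, whereas you phrase this as a base-change identification $\tilde\Delta_r\simeq\XX(p^r)\times\Sigma_r$ followed by a Galois quotient, which automatically yields the bijection with orbits and the description of each component as $\XX(p^r)/\mathrm{Stab}(v)$.
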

\begin{proof}
  Each triple 
$$(v_1,v_2,v_3) = \left( (x_1,y_1), (x_2,y_2), (x_3,y_3) \right)
\in \Sigma_r$$
 determines a   morphism 
$$ \varphi_{(v_1,v_2,v_3)}: \XX(p^r) \lra  \Delta_r\subset X_r^3 $$
of curves   over $\Q(\zeta_r)$,
defined in terms of the moduli descriptions on $\YY(p^r)$ by 
$$ 
(A,P,Q) \ \  \mapsto \ \  (\  (A,  x_1 P + y_1 Q), \  (A, x_2 P + y_2 Q), \ (A, x_3 P + y_3 Q) \ ).$$
It is easy to see that if  two elements $(v_1,v_2,v_3)$ and $(v_1',v_2',v_3') \in \Sigma_r$ 
satisfy 
$$ (v_1',v_2',v_3') =  (v_1,v_2,v_3) \gamma , \quad \mbox{ with } \gamma \in \SL_2(\Z/p^r\Z),$$
then 
$$ \varphi_{(v_1',v_2',v_3')} = \varphi_{(v_1,v_2,v_3)} \circ \gamma,$$
where $\ \gamma$ is being viewed as an   automorphism of $\XX(p^r)$  as in 
\eqref{eqn:galois-auts}.
It follows that the  geometrically irreducible cycle
$$ \Delta_r(v_1,v_2,v_3) := \varphi_{(v_1,v_2,v_3)*}(\XX(p^r)) $$
depends only on the $\SL_2(\Z/p^r\Z)$-orbit of $(v_1,v_2,v_3)$. 

Since $\SL_2(\Z/p^r\Z)$ acts transitively on $(\Z/p^r\Z \times \Z/p^r\Z)'$, one further checks that the collection of cycles $ \Delta_r(v_1,v_2,v_3)$ for $(v_1,v_2,v_3)\in \Sigma_r/\SL_2(\Z/p^r\Z)$ do not overlap on $Y_r^3$
and cover $ \Delta_r$. Hence the irreducible components of $\Delta_r$ are precisely  $\Delta_r(v_1,v_2,v_3)$ for $(v_1,v_2,v_3)\in \Sigma_r/\SL_2(\Z/p^r\Z)$.
\end{proof}

The quotient $\Sigma_r/\SL_2(\Z/p^r\Z)$
is equipped with a natural determinant map 
$$ D: \Sigma_r/\SL_2(\Z/p^r\Z)  \lra   (\Z/p^r\Z)^3$$
defined by
$$  D\left( (x_1 y_1), 
(x_2, y_2),
(x_3, y_3) \right) :=  
\left( \left| \begin{array}{cc} x_2 & y_2 \\ x_3 & y_3 \end{array}\right|,
 \left|\begin{array}{cc} x_3 & y_3 \\ x_1 & y_1 \end{array}\right|,
    \left|\begin{array}{cc} x_1 & y_1 \\ x_2 & y_2 \end{array}\right| \right).$$
For each   $[d_1,d_2,d_3]\in (\Z/p^r\Z)^3$, we can then write 
$$ \Sigma_r[d_1,d_2,d_3] := \left \{ (v_1,v_2,v_3)\in \Sigma_r \mbox{ with } D(v_1,v_2,v_3) = (d_1,d_2,d_3) \right\}.$$
The group  $\SL_2(\Z/p^r\Z)$ operates {\em simply transitively} on $\Sigma_r[d_1,d_2,d_3]$ if (and only if) 
\begin{equation}
\label{eqn:def-Gr-tilde}
 [d_1,d_2,d_3] \in  I_r := (\Z/p^r\Z)^{\times3}.
 \end{equation}
In particular, if $(v_1,v_2,v_3)$ belongs to $\Sigma_r[d_1,d_2,d_3]$, then the cycle 
$\Delta_r(v_1,v_2,v_3)$ depends only on $[d_1,d_2,d_3] \in I_r$ and will henceforth be denoted
\begin{equation}\label{Dabc} 
\Delta_r[d_1,d_2,d_3] \in \CH^2( X_r^3).
\end{equation}
A somewhat more intrinsic definition of $\Delta_r[d_1,d_2,d_3]$ as
a curve embedded in $X_r^3$ is that it corresponds to the schematic closure
of the locus of points $((A,P_1),(A,P_2),(A,P_3))$ satisfying
\begin{equation}
\label{eqn:intrinsic-cycle}
 \langle P_2,P_3\rangle = \zeta_r^{d_1}, \qquad
\langle P_3,P_1\rangle = \zeta_r^{d_2}, \qquad
\langle P_1, P_2\rangle = \zeta_r^{d_3}.
\end{equation}
This description also makes it apparent that the cycle $\Delta_r[d_1,d_2,d_3]$ is defined over 
$\Q(\zeta_r)$ but not over $\Q$. 
Let $\sigma_m\in \Gal(\Q(\zeta_r)/\Q) $ be the automorphism associated to $m\in(\Z/p^r\Z)^\times$,  
sending $\zeta_r$ to $\zeta_r^m$. 
The threefold $X_r^3$ is also equipped with an action of the group
\begin{equation}
\label{eqn:def-Gr}
 \tilde G_r := ((\Z/p^r\Z)^\times)^3 = \{ \langle a_1, a_2,a_3\rangle, \quad a_1,a_2,a_3\in (\Z/p^r\Z)^\times 
\}
\end{equation}
 of diamond operators, where the 
automorphism associated to a triple
$(\langle a_1\rangle ,\langle a_2\rangle , \langle a_3\rangle )$ has simply been
  denoted $\langle a_1, a_2, a_3\rangle$. 
 \begin{lemma}
 \label{lemma:galois-diamond-actions-on-cycles}
For all diamond operators 
$\langle a_1, a_2,a_3\rangle \in \tilde G_r$ and all $[d_1,d_2,d_3]\in I_r$, 
\begin{equation}
\label{eqn:diamond-cycles}
\langle a_1,a_2,a_3\rangle
 \Delta_r[d_1,d_2,d_3] = \Delta_r[ a_2 a_3 \cdot d_1, a_1 a_3 \cdot d_2, a_1 a_2 \cdot d_3].
 \end{equation}
  For all $\sigma_m \in \Gal(\Q(\zeta_r)/\Q)$, 
\begin{equation}
\label{eqn:sigma-cycles}
 \sigma_m\Delta_r[d_1,d_2,d_3] = \Delta_r[m \cdot d_1,m \cdot d_2,m \cdot  d_3].
\end{equation}
\end{lemma}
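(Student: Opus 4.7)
The natural approach is to work directly with the intrinsic moduli-theoretic characterization \eqref{eqn:intrinsic-cycle}: on a dense open subset of $\Delta_r[d_1,d_2,d_3]$, a geometric point is of the form $((A,P_1),(A,P_2),(A,P_3))$ with $P_i\in A[p^r]$ satisfying the three Weil pairing equations
\[
\langle P_2,P_3\rangle = \zeta_r^{d_1},\quad
\langle P_3,P_1\rangle = \zeta_r^{d_2},\quad
\langle P_1,P_2\rangle = \zeta_r^{d_3}.
\]
Since each $\Delta_r[d_1,d_2,d_3]$ is irreducible and these conditions cut out exactly one such component among the finitely many components of $\Delta_r$, it suffices to check both formulas on a generic geometric point.

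For the diamond-operator assertion \eqref{eqn:diamond-cycles}, I would apply $\langle a_1,a_2,a_3\rangle$ to a generic point of $\Delta_r[d_1,d_2,d_3]$, obtaining the triple $((A,a_1P_1),(A,a_2P_2),(A,a_3P_3))$, and then exploit $\Z$-bilinearity of the Weil pairing on $A[p^r]$ to compute
\[
\langle a_2P_2,a_3P_3\rangle = \langle P_2,P_3\rangle^{a_2a_3} = \zeta_r^{a_2a_3 d_1},
\]
and the two cyclic analogues. The resulting point therefore lies on $\Delta_r[a_2a_3 d_1,\,a_1a_3 d_2,\,a_1a_2 d_3]$, and since the diamond action is an automorphism of $X_r^3$ it carries the whole component $\Delta_r[d_1,d_2,d_3]$ onto this component.

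For the Galois assertion \eqref{eqn:sigma-cycles}, the key input is that the Weil pairing is Galois-equivariant: for any $\sigma\in G_\Q$ and $P,Q\in A[p^r]$ one has $\langle P^\sigma,Q^\sigma\rangle = \langle P,Q\rangle^\sigma$. Applying $\sigma_m$ to a generic point of $\Delta_r[d_1,d_2,d_3]$ and using $\sigma_m(\zeta_r)=\zeta_r^m$ yields $\langle P_2^{\sigma_m},P_3^{\sigma_m}\rangle = \sigma_m(\zeta_r^{d_1}) = \zeta_r^{md_1}$, and similarly for the other two pairings, placing $\sigma_m(\text{point})$ on $\Delta_r[md_1,md_2,md_3]$.

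The only delicate point in this plan is ensuring that the moduli description \eqref{eqn:intrinsic-cycle} really does pick out a single geometric component and is not merely a necessary condition; this is guaranteed by Lemma \ref{lemma:geometric-components} together with the fact that $\SL_2(\Z/p^r\Z)$ acts simply transitively on $\Sigma_r[d_1,d_2,d_3]$ when $[d_1,d_2,d_3]\in I_r$, so the component is uniquely determined by the three Weil-pairing values. Once this uniqueness is in hand, both formulas are forced by the computations above on a single generic point, and pass from the generic point to the schematic closures on both sides.
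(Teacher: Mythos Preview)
Your proof is correct and matches the paper's argument for the Galois part \eqref{eqn:sigma-cycles} essentially verbatim. For the diamond-operator part \eqref{eqn:diamond-cycles} the paper instead phrases the same bilinearity computation in terms of the determinant map $D$ on row vectors---namely the identity $D(a_1v_1,a_2v_2,a_3v_3)=[a_2a_3,a_1a_3,a_1a_2]\,D(v_1,v_2,v_3)$---which is exactly your Weil-pairing calculation translated through the parametrization of Lemma~\ref{lemma:geometric-components}; the two formulations are interchangeable and neither offers a real advantage over the other.
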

\begin{proof} Equation \eqref{eqn:diamond-cycles} follows directly from the identity
$$ D( a_1 v_1, a_2 v_2, a_3 v_3) = [a_2 a_3, a_1 a_3, a_1 a_2] D(v_1,v_2,v_3).$$ 
The first equality in \eqref{eqn:sigma-cycles}
 is most readily seen from the equation
 \eqref{eqn:intrinsic-cycle} defining the cycle $\Delta_r[d_1,d_2,d_3]$,
since applying the automorphism $\sigma_m\in \Gal(\Q(\zeta_r)/\Q)$ has the effect of replacing $\zeta_r$ by 
$\zeta_r^m$. 
\end{proof}

\begin{remark}
Assume $m$ is a quadratic residue
in $(\Z/p^r\Z)^\times$, which is the case, for instance, when $\sigma_m$ belongs to
$\Gal(\Q(\zeta_r)/\Q(\zeta_1))$. Then it follows from \eqref{eqn:diamond-cycles} and \eqref{eqn:sigma-cycles} that
\begin{equation}\label{rootm}
 \sigma_m\Delta_r[d_1,d_2,d_3] = \langle m,m,m\rangle^{1/2} \Delta_r[d_1,d_2,d_3].
\end{equation}
\end{remark}

 Let us now turn to the compatibility properties of the cycles $\Delta_r[d_1, d_2,d_3]$   
  as the level $r$ varies. Recall  the modular curve  
    $X_{r,r+1}$  classifying generalised 
  elliptic curves together with a distinguished cyclic subgroup of order $p^{r+1}$  and a point of order $p^r$ in it.
 The maps $\mu$,  $\varpi_1$, $\pi_1$, $\varpi_2$ and $\pi_2$ of
\eqref{eqn:def-varpi-12} induce similar
maps on the  triple products:
\begin{equation}
\label{eqn:triangle-of-morphisms}
 \xymatrix{  
 X_{r+1}^3 \ar[d]^{\mu^3} \ar[dr]^{\varpi_1^3}  &  \\
 X_{r,r+1}^3 \ar[r]_-{\pi_1^3}  & X_r^3,   } \qquad \qquad \qquad
 \xymatrix{  
 X_{r+1}^3 \ar[d]^{\mu^3} \ar[dr]^{\varpi_2^3}  &  \\
 X_{r,r+1}^3 \ar[r]_-{\pi_2^3}  & X_r^3.  }
\end{equation}

 A finite morphism $j:V_1\lra V_2$ of varieties induces maps
 $$j_\ast:\CH^j(V_1) \lra \CH^j(V_2), \qquad j^*:\CH^j(V_2) \lra \CH^j(V_1)$$ 
 between Chow groups, and  $j_\ast j^\ast$ agrees with the multiplication by
 deg$(j)$ on $\CH^j(V_2)$. If $j$ is a Galois cover with Galois group $G$,
\begin{equation}
\label{eqn:galois-pushforward}
 j^* j_\ast (\Delta) = \sum_{\sigma\in G} \sigma \Delta.
 \end{equation}
 By abuse of notation we will denote the associated maps on cycles (rather than just on cycle classes) by the same symbols.
 \begin{lemma}
 \label{lemma:compat-cycles-proj}
 For all $r \ge 1$ and all $[d_1',d_2',d_3']\in I_{r+1}$ whose image in 
 $I_r$ is $[d_1,d_2,d_3]$,
 \begin{eqnarray*}
  (\varpi_1^3)_\ast \Delta_{r+1}[d_1',d_2',d_3'] &=& p^3 \Delta_{r}[d_1,d_2,d_3],  \\
 \qquad (\varpi_2^3)_\ast \Delta_{r+1}[d_1',d_2',d_3'] &=& (U_p)^{\otimes 3} \Delta_{r}[d_1,d_2,d_3].
 \end{eqnarray*}
  The  cycles $\Delta_r[d_1,d_2,d_3]$ also  satisfy the distribution relations
 $$ \sum_{[d_1',d_2',d_3']} \Delta_{r+1}[d_1',d_2',d_3'] = (\varpi_1^3)^\ast \Delta_{r}[d_1,d_2,d_3],$$
 where the sum is taken over all triples $[d_1',d_2',d_3'] \in I_{r+1} $ which map  to 
 $[d_1,d_2,d_3]$ in $I_r$. 
 \end{lemma}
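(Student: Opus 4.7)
The plan is to exploit the explicit parameterization $\varphi_{(v_1,v_2,v_3)}: \XX(p^r) \to X_r^3$ of these cycles constructed in the proof of Lemma~\ref{lemma:geometric-components}, together with the intrinsic moduli description \eqref{eqn:intrinsic-cycle} in terms of Weil pairings. For the first identity, I would observe that the composition $\varpi_1^3 \circ \varphi_{(v_1',v_2',v_3')}$ factors through the natural degree-$p^3$ projection $\XX(p^{r+1}) \to \XX(p^r)$ followed by $\varphi_{(v_1,v_2,v_3)}$, which is birational onto its image (the $\SL_2(\Z/p^r\Z)$-action on $\Sigma_r[d_1,d_2,d_3]$ being simply transitive). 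For the distribution relation, the Weil-pairing compatibility $e_{p^{r+1}}(\tilde P, \tilde Q)^p = e_{p^r}(p\tilde P, p\tilde Q)$ shows that a lift $((A,\tilde P_i))_i$ of a point of $\Delta_r[d_1,d_2,d_3]$ satisfies the equations \eqref{eqn:intrinsic-cycle} defining $\Delta_{r+1}[d_1',d_2',d_3']$ precisely when $d_k' \equiv d_k \pmod{p^r}$; since $\varpi_1^3$ is \'etale away from cusps and elliptic points, its pullback is the reduced sum over the $p^3$ such lifts.

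The second identity is the most involved. I would factor $\varpi_2 = \pi_2\circ\mu$ as in \eqref{eqn:triangle-of-morphisms} and invoke the defining identity $U_p = \pi_{2*}\pi_1^*$, reducing the formula to the cycle-theoretic identity
\[
(\mu^3)_*\, \Delta_{r+1}[d_1',d_2',d_3'] \;=\; (\pi_1^3)^* \Delta_r[d_1,d_2,d_3]
\]
of cycles on $X_{r,r+1}^3$. The inclusion of supports comes from $\pi_1\circ\mu = \varpi_1$. The heart of the argument is to show that $\mu^3$ is birational when restricted to $\Delta_{r+1}[d_1',d_2',d_3']$: by a direct moduli analysis, two points $(A, P, Q), (A, P', Q') \in \XX(p^{r+1})$ with identical image must satisfy $(P', Q') = \lambda(P, Q)$ with $\lambda \equiv 1 \pmod{p^r}$, and the Weil-pairing normalization $e(P,Q) = \zeta_{r+1}$ forces $\lambda^2 \equiv 1 \pmod{p^{r+1}}$; since $p$ is odd, only $\lambda = 1$ survives. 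Comparing degrees, $\pi_1^3$ restricted to the irreducible image $\mu^3(\Delta_{r+1}[d_1',d_2',d_3'])$ has degree $p^3$, which matches the total degree of $\pi_1^3$, so no additional components of the preimage can exist.

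The main obstacle is this last point: verifying that the single irreducible cycle $\mu^3(\Delta_{r+1}[d_1',d_2',d_3'])$ exhausts the entire preimage $(\pi_1^3)^{-1}(\Delta_r[d_1,d_2,d_3])$. Equivalently, the somewhat surprising fact is that the cycle $(\mu^3)_*\Delta_{r+1}[d_1',d_2',d_3']$ turns out to be independent of the chosen lift $(d_1',d_2',d_3')$ of $(d_1,d_2,d_3)$, even though $\Delta_{r+1}[d_1',d_2',d_3']$ itself genuinely depends on this lift. The degree-counting argument absorbs the $p^3$ different lifts into the degree of $\pi_1^3$ restricted to the image: the map $\mu^3\circ\varphi_{(v_1',v_2',v_3')}$ is generically injective, while $\varpi_1^3\circ\varphi_{(v_1',v_2',v_3')}$ has degree $p^3$.
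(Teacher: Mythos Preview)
Your approach is correct and essentially the same as the paper's: both arguments hinge on the cycle identity $(\mu^3)_\ast\Delta_{r+1}[d_1',d_2',d_3']=(\pi_1^3)^\ast\Delta_r[d_1,d_2,d_3]$ on $X_{r,r+1}^3$, from which the second pushforward formula follows by applying $(\pi_2^3)_\ast$ and using $U_p=\pi_{2\ast}\pi_1^\ast$. The only organizational difference is that the paper derives the distribution relation \emph{from} this same identity (applying $(\mu^3)^\ast$ and invoking the Galois structure of $\mu^3$, cf.\ \eqref{eqn:galois-pushforward}), whereas you prove it independently via Weil-pairing compatibility; both routes are fine. One small point: your assertion that two points of $\XX(p^{r+1})$ with the same image under $\mu^3\circ\varphi$ differ by a single scalar $\lambda$ is correct but not quite immediate---a priori the three coordinates give three scalars $\lambda_i\equiv 1\pmod{p^r}$, and one needs that the $v_i'$ reduced mod $p$ are three pairwise independent eigenvectors of a $2\times 2$ matrix over $\Z/p\Z$, forcing that matrix to be scalar (equivalently, use Lemma~\ref{lemma:galois-diamond-actions-on-cycles} to see that the $(1+p^r\Z/p^{r+1}\Z)^3$-orbit of $\Delta_{r+1}[d_1',d_2',d_3']$ meets itself only trivially). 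The paper simply asserts the degrees are $1$ and $p^3$ without further comment.
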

 \begin{proof}
 A direct verification based on the definitions shows that 
 the morphisms  $\mu^3$ and $\pi_1^3$ of
 \eqref{eqn:triangle-of-morphisms} induce morphisms
 $$
 \xymatrix{
  \Delta_{r+1}[d_1' d_2',d_3'] \ar[r]^{\mu^3} & \mu_{\ast}^3 \Delta_{r+1}[d_1',d_2',d_3'] \ar[r]^{\pi_1^3} & \Delta_r[d_1,d_2,d_3], }
  $$
  of degrees $1$ and  $p^3$ respectively.
 Hence 
 the restriction of
  $\varpi_1^3$ to $\Delta_{r+1}[d_1',d_2',d_3']$ induces a 
  map of degree $p^3$ from $\Delta_{r+1}[d_1',d_2',d_3']$ to $\Delta_r[d_1,d_2,d_3]$, 
  which implies the first assertion.
It also follows from this that
 \begin{equation}
 \label{eqn:compat-cycles-proj}
  \mu_{\ast}^3 \Delta_{r+1}[d_1',d_2',d_3'] = (\pi_1^3)^\ast \Delta_r[d_1,d_2,d_3].
  \end{equation}
  Applying $(\pi_2^3)_\ast$ to this identity implies that 
  $$ (\varpi_2^3)_\ast \Delta_{r+1}[d_1',d_2',d_3'] = (U_p)^{\otimes 3} \Delta_r[d_1,d_2,d_3].$$
  The second compatibility relation   follows.
  To prove the distribution relation, observe that the sum that occurs in it  is taken over the 
  $p^3$ translates of a fixed $\Delta_{r+1}[d_1',d_2',d_3']$ for the action of the Galois group of
   $X_{r+1}^3$ over $X_{r, r+1}^3$, 
  and hence, by \eqref{eqn:galois-pushforward}, that 
   $$  \sum_{[d_1',d_2',d_3']} \Delta_{r+1}[d_1',d_2',d_3']  = (\mu^*)^3 \mu_{*}^3 \Delta_{r+1}[d_1',d_2',d_3'].$$
   The result then follows from  \eqref{eqn:compat-cycles-proj}. 
  \end{proof}

  \subsection{Galois cohomology classes}
  The goal of this section is to parlay the cycles $\Delta_r[d_1,d_2,d_3]$   
   into   Galois cohomology classes with
  values in $H^1_{\et}(\bar X_r,\Z_p)^{\otimes 3}(2)$, essentially by considering their images  under the
  $p$-adic \'etale Abel-Jacobi map:
  \begin{equation}\label{AJ000}
   \AJ_{\et}: \CH^2(X_r^3)_0 \lra H^1(\Q,H^3_{\et}(\bar X_r^3, \Z_p(2))),
   \end{equation}
  where
  $$ \CH^2(X_r^3)_0 := \ker\left(\CH^2(X_r^3) \lra H^4_{\et}(\bar X_r^3,\Z_p(2)) \right)$$
  denotes the kernel of the \'etale  cycle class map, i.e., the group of null-homologous algebraic cycles
  defined over $\Q$. 
   There are two issues that need to be dealt with.
   Firstly, the cycles $\Delta_r[d_1,d_2,d_2]$  
   need not be null-homologous and have to be suitably modified so that they lie in the domain
   of the  Abel Jacobi map. 
   Secondly, these cycles are defined over $\Q(\zeta_r)$ and not over $\Q$, and it is desirable to  descend the  field of 
   definition of the associated extension classes.
   
   To deal with the first issue, let 
   $q$ be any prime not dividing $Mp$, and let $T_q$ denote the
    Hecke operator attached to this prime. It can be used to 
construct an
algebraic correspondence on $X_r^3$ 
by setting
  $$ \theta_{q} := (T_q - (q+1)) ^{\otimes 3}.$$
 \begin{lemma}
  \label{lemma:theta-annihilates}
  The element $\theta_q$ annihilates  the target  $H^4_{\et}(\bar X_r^3,\Z_p)$
  of the \'etale cycle class map on $\CH^2(X_r^3)$.
  \end{lemma}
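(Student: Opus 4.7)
My plan is to use the Künneth decomposition together with the action of $T_q - (q+1)$ on individual cohomology groups of the curve $\bar X_r$.

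First I would invoke the Künneth formula for \'etale cohomology with $\Z_p$-coefficients to decompose
\[
 H^4_{\et}(\bar X_r^3,\Z_p) = \bigoplus_{i+j+k=4} H^i_{\et}(\bar X_r,\Z_p)\otimes H^j_{\et}(\bar X_r,\Z_p)\otimes H^k_{\et}(\bar X_r,\Z_p),
\]
where each index $i,j,k$ lies in $\{0,1,2\}$ since $X_r$ is a curve. The operator $\theta_q=(T_q-(q+1))^{\otimes 3}$ respects this decomposition and acts on each summand as the tensor product of $T_q-(q+1)$ on the three factors.

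Next I would observe that $T_q-(q+1)$ annihilates $H^0$ and $H^2$ of $\bar X_r$. On $H^0_{\et}(\bar X_r,\Z_p)$ the operator $T_q$ acts as multiplication by the degree $q+1$ of the Hecke correspondence (each geometric component of $X_r$ is sent to $q+1$ components counted with multiplicity, whose classes all coincide in $H^0$). Dually, on $H^2_{\et}(\bar X_r,\Z_p(1))$, which is generated by classes of points, $T_q$ again acts as $q+1$, and the Tate twist does not affect the eigenvalue computation on this rank-one piece.

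Finally I would check that every triple $(i,j,k)$ with $i+j+k=4$ and $i,j,k\in\{0,1,2\}$ must contain at least one index equal to $0$ or $2$: indeed the only way to avoid this would be $(i,j,k)=(1,1,1)$, which has total degree $3\neq 4$. The possible triples are permutations of $(2,2,0)$ and of $(2,1,1)$, each of which has at least one coordinate in $\{0,2\}$. Hence on every Künneth summand at least one tensor factor of $\theta_q$ acts as zero, and consequently $\theta_q$ annihilates the whole of $H^4_{\et}(\bar X_r^3,\Z_p)$. The main (mild) subtlety is simply verifying the eigenvalue $q+1$ on $H^0$ and $H^2$; everything else is combinatorics on the Künneth indices.
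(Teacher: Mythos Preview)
Your proof is correct and takes essentially the same approach as the paper: both invoke the K\"unneth decomposition and the fact that $T_q$ acts as multiplication by $q+1$ on $H^2_{\et}(\bar X_r,\Z_p)$ (the paper notes only $H^2$, which already suffices since every admissible triple $(i,j,k)$ summing to $4$ contains a $2$; your additional remark about $H^0$ is true but not needed).
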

  \begin{proof}
The correspondence 
  $T_q$ acts as multiplication by $(q+1)$ on $H^2_{\et}(\bar X_r, \Z_p)$ and  
  $\theta_q$ therefore annihilates all the terms in the K\"unneth decomposition of 
  $H^4_{\et}(\bar X_r,\Z_p)$. 
  \end{proof}

The {\em modified diagonal cycles} in $\CH^2(X_r^3)$  
are  defined by
 the rule 
\begin{equation}\label{mdc}
 \Delta_r^\circ[d_1,d_2,d_3]  :=  \theta_q\Delta_r[d_1,d_2,d_3].
 \end{equation}
 Lemma \ref{lemma:theta-annihilates} 
  shows that they 
  are null-homologous and defined over $\Q(\zeta_r)$. Define
\begin{eqnarray*}
 \kappa_r[d_1,d_2,d_3]  & :=  &
 \AJ_{\et}(\Delta_r^\circ[d_1,d_2,d_3]) \in H^1(\Q(\zeta_r), H^1_{\et}(\bar X_r, \Z_p)^{\otimes 3}(2)).
 \end{eqnarray*}
To deal with the circumstance that the cycles $\Delta_r^\circ[d_1,d_2,d_3]$ are
   only defined over $\Q(\zeta_r)$, and hence that
 the associated cohomology classes   $\kappa_r[d_1,d_2,d_3]$
need not (and in fact, do not)   extend to
 $G_\Q$, it is necessary
  to replace the $\Z_p[\tilde G_r][G_\Q]$-module
   $H^1_{\et}(\bar X_r, \Z_p)^{\otimes 3}(2)$
 by an appropriate twist over $\Q(\zeta_r)$.  Let 
$G_r$   denote the Sylow $p$-subgroup  of  
  the group  $\tilde G_r$   of 
  \eqref{eqn:def-Gr}, and let 
 $G_\infty  := \invlim G_r.$
 Let 
 $$ \Lambda(G_r)  := \Z_p[G_r], \qquad  \Lambda(G_\infty) = \Z_p[[G_\infty]] $$
 be the finite group ring  attached to  $G_r$  
  and the associated Iwasawa algebra, respectively.
  
Let $ \Lambda(G_r)(\pm \half)$ 
denote the Galois module which is isomorphic to
$ \Lambda(G_r)$ as a $\Lambda(G_r)$-module, and on which 
  the Galois group  $G_{\Q(\zeta_1)}$ is made to act   via its quotient 
 $\Gal(\Q(\zeta_r)/\Q(\zeta_1)) = 1 + p \Z/p^r\Z$, the element $\sigma_m$ acting as multiplication by the group-like
 element $\langle m,m,m\rangle^{\mp 1/2}$. 
   Let $ \Lambda(G_\infty)(\pm \half)$ 
 denote the projective limit of the $ \Lambda(G_r)(\pm \half)$. 
 It follows from the definitions that if
 $$ \nu_{\kk,\llll,\mm}: \Lambda(G_r) \lra \Z/p^r\Z, \qquad \mbox{ or }  \qquad
 \nu_{\kk,\llll,\mm}: \Lambda(G_\infty) \lra \Z_p$$
 is the  homomorphism sending $\langle a_1,a_2,a_3\rangle$  to $a_1^\kk a_2^\llll a_3^\mm$,
then
\begin{equation}
\label{eqn:spec-klm-Lambdar-tilde}
 \Lambda(G_r)(\half) \otimes_{ \nu_{\kk,\llll,\mm} } \Z/p^r\Z = (\Z/p^r\Z)( \varepsilon_{\cyc}^{-(\kk + \llll +\mm )/2}), 
 \end{equation}
 where the tensor product is taken over $\Lambda(G_r)$, and similarly for $G_\infty$. 
In particular if $\kk+\llll+\mm = 2t$ is an even integer,
 \begin{equation}
\label{eqn:spec-klm-Lambda-tilde}
 \Lambda(G_\infty)(\half) \otimes_{ \nu_{\kk,\llll,\mm} } \Z_p =  \Z_p(-t)(\omega^t)
  \end{equation}
 as $G_\Q$-modules.
 More generally, if $\Omega$ is any $\Lambda(G_\infty)$ module, 
 write
 $$ \Omega(\half) := \Omega \otimes_{\Lambda(G_\infty) } \Lambda(G_\infty)(\half),
 \qquad 
  \Omega(\mhalf) := \Omega \otimes_{\Lambda(G_\infty) } \Lambda(G_\infty)(\mhalf),
 $$
 for the  relevant twists of $\Omega$, which are isomorphic to
 $\Omega$ as a $\Lambda(G_\infty)[G_{\Q(\mu_{p^\infty})}]$-module but are endowed with   different 
   actions of $G_\Q$.

 There is   a canonical
 Galois-equivariant $\Lambda(G_r)$-hermitian 
 bilinear, $ \Lambda(G_r)$-valued  pairing 
 \begin{equation}
 \label{eqn:pairing-LambdaG}
 \langle\! \langle \ , \ \rangle\!\rangle_r: H^1_{\et}(\bar X_r, \Z_p)^{\otimes 3}(2)(\half) \    \times    \ H^1_{\et}(\bar X_r,\Z_p)^{\otimes 3}(1)(\half) \ \  \lra  \ \  \Lambda(G_r),
 \end{equation}
 given by the formula
 $$ \langle\!\langle a,  b \rangle\!\rangle_r := \sum_{\sigma=\langle d_1,d_2,d_3\rangle \in G_r } \langle a^\sigma, b\rangle_{X_r} \cdot \langle d_1, d_2, d_3 \rangle,$$
 where 
 $$\langle\ ,\ \rangle_{X_r}: H^1_{\et}(\bar X_r, \Z_p)^{\otimes 3}(2) \times H^1_{\et}(\bar X_r,\Z_p)^{\otimes 3}(1) \lra  H^2_{\et}(\bar X_r, \Z_p(1))^{\otimes 3} = \Z_p$$
 arises from the  Poincar\'e duality between  $H^3_{\et}(\bar X_r^3,\Z_p)(2)$ and  $H^3_{\et}(\bar X_r^3,\Z_p)(1)$. This pairing enjoys the following properties:
 \begin{itemize}
  \item For all $\lambda \in \Lambda(G_r)$, 
 $$ \langle\!\langle \lambda a,  b\rangle\!\rangle_r = \lambda^* \langle\!\langle   a,   b\rangle\!\rangle_r, \qquad 
  \langle\!\langle   a, \lambda b\rangle\!\rangle_r = \lambda \langle\!\langle   a,   b\rangle\!\rangle_r,$$
where $\lambda^*\in \Lambda(G_r)$ is obtained from $\lambda$ by applying the involution on the group ring which sends every group-like element to its inverse. In particular,  the pairing of
 \eqref{eqn:pairing-LambdaG}
 can and will also be viewed as a $\Lambda(G_r)$-valued
$*$-hermitian pairing
 $$ \langle\! \langle \ , \ \rangle\!\rangle_r: H^1_{\et}(\bar X_r, \Z_p)^{\otimes 3}(2)  \times H^1_{\et}(\bar X_r,\Z_p)^{\otimes 3}(1)  \lra    \Lambda(G_r).$$
 \item For all $\sigma\in G_{\Q(\zeta_1)}$, we have
 $\langle\!\langle \sigma a, \sigma b\rangle\!\rangle_r = \langle\!\langle   a,   b\rangle\!\rangle_r$.
  \item The $U_p$ and $U_p^*$ operators are adjoint to each other under this pairing,
  giving rise to a duality (denoted by the same symbol, by an abuse of notation)
\begin{equation}
\nonumber
\langle\! \langle \ , \ \rangle\!\rangle_r:    e^\ast H^1_{\et}(\bar X_r, \Z_p)^{\otimes 3}(2)(\half) \ \times  \   e H^1_{\et}(\bar X_r,\Z_p)^{\otimes 3}(1)(\half)  \ \ \lra  \ \  \Lambda(G_r).
\end{equation}
\end{itemize}
Define
\begin{eqnarray*}
\Hr  & :=  &   \mathrm{Hom}_{\Lambda(G_r)}(H^1_{\et}(\bar X_r, \Z_p)^{\otimes 3}(1)(\half), \Lambda(G_r)) \simeq H^1_{\et}(\bar X_r, \Z_p)^{\otimes 3}(2)(\half), \\
\Hrord  & :=  &   \mathrm{Hom}_{\Lambda(G_r)}(    e H^1_{\et}(\bar X_r, \Z_p)^{\otimes 3}(1)(\half),  \Lambda(G_r)) \simeq e^\ast H^1_{\et}(\bar X_r, \Z_p)^{\otimes 3}(2)(\half).
\end{eqnarray*}
The above identifications of $\Z_p[G_{\Q(\zeta_1)}]$-modules follow from the pairing
\eqref{eqn:pairing-LambdaG}.

To descend the field of definition of the classes $\kappa_r[d_1,d_2,d_3]$, 
     we package them together 
  into   elements
  \begin{eqnarray*}
 \hkappa_r[a,b,c] &\in& H^1(\Q(\zeta_r), \Hr)
 \end{eqnarray*}
 indexed by triples 
 \begin{equation}
  \label{eqn:indexed-by-the-triples}
[a,b,c]\in I_1 =  (\Z/p\Z)^{\times 3} = 
 \mu_{p-1}(\Z_p)^3 \subset (\Z_p^\times)^3.
 \end{equation}
   The class $\hkappa_r[a,b,c]$  is defined  by setting, for all $\sigma\in G_{\Q(\zeta_r)}$ and 
 all $\gamma_r \in H^1_{\et}(\bar X_r,\Z_p)^{\otimes 3}(1)$, 
\begin{equation}
\label{eqn:define-kappa-r-prime}
 \hkappa_r[a,b,c](\sigma)(\gamma_r)  = 
 \langle \!\langle \kappa_r[a,b,c](\sigma),  \gamma_r \rangle\!\rangle_r,
 \end{equation}
 where the elements  $a,b,c \in (\Z/p\Z)^\times$ are viewed as elements of
 $(\Z/p^r\Z)^\times$ via the Teichmuller lift alluded to in 
 \eqref{eqn:indexed-by-the-triples}. Note that there is a natural identification
$$
 H^1(\Q(\zeta_r), \Hr)  =  {\rm Ext}^1_{\Lambda(G_r)[G_{\Q(\zeta_r)}]}(H^1_{\et}(\bar X_r, \Z_p)^{\otimes 3}(1),  \Lambda(G_r),
$$
because $H^1_{\et}(\bar X_r, \Z_p)^{\otimes 3}(1) = H^1_{\et}(\bar X_r, \Z_p)^{\otimes 3}(1)(\half)$ {\em as $G_{\Q(\zeta_r)}$-modules} and the $\Lambda(G_r)$-dual of the latter
is $\Hr$. With these definitions we have
 \begin{lemma} 
 \label{lemma:extend}
 The class $\hkappa_r[a,b,c]$ is the restriction to $G_{\Q(\zeta_r)}$ of  a class  
 $$\hkappa_r[a,b,c] \in H^1(\Q(\zeta_1), \Hr)
 = {\rm Ext}^1_{\Lambda(G_r)[G_{\Q(\zeta_1)}]}(H^1_{\et}(\bar X_r, \Z_p)^{\otimes 3}(1)(\half),  \Lambda(G_r)). 
 $$
 Furthermore, for all $m\in \mu_{p-1}(\Z_p)$, 
 $$ \sigma_m \ \hkappa_r[a,b,c] = \hkappa_r[m a,m b,m c].$$
 \end{lemma}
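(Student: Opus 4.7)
The plan is to combine the Galois equivariance of the \'etale Abel--Jacobi map with the transformation formula \eqref{rootm}, exploiting the fact that the twist $\Lambda(G_r)(\half)$ is engineered precisely to absorb the non-trivial action of $\Gal(\Q(\zeta_r)/\Q(\zeta_1))$ on the underlying cycle.

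For the first assertion, I would begin by observing that when $p$ is odd, every $m \in 1 + p\Z/p^r\Z$ is a square in $(\Z/p^r\Z)^\times$, its unique $p$-power square root lying in $1 + p\Z/p^r\Z$. Since $\theta_q$ is defined over $\Q$ and commutes with the diamond action, \eqref{rootm} yields
\[
\sigma_m \, \Delta_r^\circ[a,b,c] \;=\; \langle m,m,m\rangle^{1/2} \, \Delta_r^\circ[a,b,c]
\]
for every $\sigma_m \in \Gal(\Q(\zeta_r)/\Q(\zeta_1))$. Applying the $G_\Q$-equivariant map $\AJ_{\et}$ and the pairing \eqref{eqn:pairing-LambdaG}, one sees that $\hkappa_r[a,b,c]$, viewed momentarily as a class in $H^1(\Q(\zeta_r), H^1_{\et}(\bar X_r, \Z_p)^{\otimes 3}(2))$, transforms under the conjugation action of $\sigma_m$ by multiplication by $\langle m,m,m\rangle^{1/2} \in \Lambda(G_r)^\times$. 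Passing to the coefficients $\Hr \simeq H^1_{\et}(\bar X_r, \Z_p)^{\otimes 3}(2)(\half)$ introduces, by the very definition of the twist, a compensating factor $\langle m,m,m\rangle^{-1/2}$ in the Galois action, so that $\hkappa_r[a,b,c] \in H^1(\Q(\zeta_r), \Hr)$ is fixed by $\Gal(\Q(\zeta_r)/\Q(\zeta_1))$.

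To promote this fixed class to a class in $H^1(\Q(\zeta_1), \Hr)$ I would either appeal to the inflation--restriction sequence and verify vanishing of the obstruction in $H^2(\Gal(\Q(\zeta_r)/\Q(\zeta_1)), \Hr^{G_{\Q(\zeta_r)}})$, or, more concretely, exhibit an explicit lift. Namely, choose any set-theoretic lift $\tilde z$ of the cycle class of $\Delta_r^\circ[a,b,c]$ to a module fitting in a $G_{\Q(\zeta_1)}$-equivariant extension realising $\AJ_{\et}$ (for instance the relevant truncation of the \'etale cohomology of the open complement of the support of the cycle), and set
\[
\tilde\kappa_r[a,b,c](g) \;:=\; g \cdot \tilde z \;-\; \langle m_g, m_g, m_g\rangle^{1/2} \tilde z, \qquad g \in G_{\Q(\zeta_1)},
\]
where $m_g \in 1 + p\Z/p^r\Z$ is the reduction of $\varepsilon_{\cyc}(g)$. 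The identity displayed above guarantees that this difference lies in the coefficient module $\Hr$, and a routine computation checks the cocycle condition and that the restriction to $G_{\Q(\zeta_r)}$ returns $\hkappa_r[a,b,c]$. This extension step is the most delicate part of the argument: it is where the role of the $(\half)$ twist becomes essential, precisely because the half-power $\langle m,m,m\rangle^{1/2}$ only makes sense after killing the component of $\tilde G_r$ on which squaring fails to be invertible, which is exactly what passing to the pro-$p$ subgroup $G_r$ and its Iwasawa algebra $\Lambda(G_r)$ accomplishes.

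For the second assertion I would simply transport Lemma \ref{lemma:galois-diamond-actions-on-cycles} through the construction: the identity $\sigma_m \Delta_r^\circ[a,b,c] = \Delta_r^\circ[ma,mb,mc]$ for $m \in \mu_{p-1}(\Z_p)$ (viewed via Teichm\"uller in $(\Z/p^r\Z)^\times$) persists after applying $\AJ_{\et}$, and the $G_\Q$-equivariance of the pairing $\langle\!\langle \ ,\ \rangle\!\rangle_r$ (the second bulleted property in its list) yields $\sigma_m \hkappa_r[a,b,c] = \hkappa_r[ma,mb,mc]$ under the conjugation action of $\Gal(\Q(\zeta_1)/\Q)$ on $H^1(\Q(\zeta_1), \Hr)$; the same formula then holds for the lift constructed in the previous paragraph because the defining identity for $\tilde\kappa_r$ depends on the cycle only through a $G_\Q$-equivariant recipe.
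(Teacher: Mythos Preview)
Your overall strategy---showing invariance under $\Gal(\Q(\zeta_r)/\Q(\zeta_1))$ after the $(\half)$-twist and then lifting---is sound in spirit, but the explicit lift you propose has a genuine gap. The module you suggest, namely the \'etale cohomology of $\bar X_r^3 \setminus |\Delta_r^\circ[a,b,c]|$, is only $G_{\Q(\zeta_r)}$-equivariant: the single cycle $\Delta_r^\circ[a,b,c]$ is defined over $\Q(\zeta_r)$, not over $\Q(\zeta_1)$, so its complement carries no natural action of $G_{\Q(\zeta_1)}$ and your cocycle formula $g\cdot \tilde z - \langle m_g,m_g,m_g\rangle^{1/2}\tilde z$ does not make sense for $g$ outside $G_{\Q(\zeta_r)}$. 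The inflation--restriction alternative is also insufficient on its own: even granting vanishing of the $H^2$ obstruction, the lift is only determined up to $H^1(\Gal(\Q(\zeta_r)/\Q(\zeta_1)),\Hr^{G_{\Q(\zeta_r)}})$, whereas the lemma (and everything built on it in later sections) concerns a \emph{specific} class.

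The paper closes this gap by replacing $|\Delta_r^\circ[a,b,c]|$ with the full $G_r$-orbit
\[
\Delta_r^\circ[[a,b,c]] \;:=\; |\Delta_1^\circ[a,b,c]| \times_{X_1^3} X_r^3,
\]
which \emph{is} defined over $\Q(\zeta_1)$. The excision sequence for its complement in $X_r^3$ is then a sequence of $\Lambda(G_r)[G_{\Q(\zeta_1)}]$-modules, and the assignment $\langle d_1,d_2,d_3\rangle \mapsto \cl(\Delta_r^\circ[ad_2d_3,\,bd_1d_3,\,cd_1d_2])$ gives a $G_{\Q(\zeta_1)}$-equivariant inclusion $j:\Lambda(G_r)(\mhalf) \hookrightarrow H^0_{\et}(\bar\Delta_r^\circ[[a,b,c]],\Z_p)_0$ (equivariance being exactly Lemma~\ref{lemma:galois-diamond-actions-on-cycles}). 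Pulling back by $j$ and pushing forward to the K\"unneth component produces the desired extension over $\Q(\zeta_1)$ directly, with no descent argument needed; dualising over $\Lambda(G_r)$ and twisting then lands in $H^1(\Q(\zeta_1),\Hr)$. Your cocycle idea becomes correct once you work inside this larger complement---what you were missing is that packaging the entire diamond-orbit of cycles into a single $\Lambda(G_r)$-line is precisely the device that makes the construction $G_{\Q(\zeta_1)}$-equivariant.
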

\begin{proof}
We will prove this by giving a more conceptual description of the cohomology class $\hkappa_r[a,b,c]$. 
Let  $|\Delta|$ denote the support of an algebraic cycle $\Delta$, and let
\begin{equation}\label{DeltaDoubleBracket}
  \Delta_r^\circ[[a,b,c]] :=    |\Delta_1^\circ[a,b,c]| \times_{X_1^3} X_r^3
\end{equation}
 denote the  inverse image in $X_r^3$ of $|\Delta_1^\circ[a,b,c]|$, 
  which fits into the cartesian diagram
 $$ \xymatrix{ \Delta_r^\circ[[a,b,c]] \ar@{^{(}->}[r]   \ar@{>>}[d] & X_r^3  \ar@{>>}[d] ^{(\varpi_1^{r-1})^3} \\ |\Delta_1^\circ[a,b,c]| 
 \ar@{^{(}->}[r]   & X_1^3. }  $$
 As in the proof of Lemma 
  \ref{lemma:geometric-components},
observe that 
 $$ \Delta_r^\circ[[a,b,c]] =  \bigsqcup_{[d_1,d_2,d_3]\in   I_r^1}  |\Delta_r^\circ[ad_1, b d_2,cd_3]|$$
 where $I_r^1$ denotes the $p$-Sylow subgroup of $I_r$.
Consider  now the
commutative diagram of
 $\Lambda(G_r)[G_\Q(\zeta_1)]$-modules with exact rows: 
\medskip \medskip
\begin{equation}
\label{eqn:extension-construction}
 \xymatrix{  & & 
 \Lambda(G_r)(\mhalf)  \ar@{^{(}->}[d]^j \\
H^3_{\et}(\bar X_r^3,\Z_p)(2) \ar@{^{(}->}[r] \ar@{>>}[d]^p & 
H^3_{\et}(\bar X_r^3\!\!-\!\Delta_r^\circ[[a,b,c]], \Z_p)(2) \ar@{>>}[r]
& H^0_{\et}(\bar\Delta_r^\circ[[a,b,c]],\Z_p)_0  \   \\
  H^1_{\et}(\bar X_r,\Z_p)^{\otimes 3}(2),}
 \end{equation}
where
\begin{itemize}
\item  the map $j$ is the inclusion
 defined on group-like elements by
$$ j\left(\langle d_1,d_2,d_3\rangle \right) =  \cl(\Delta_r^\circ[a d_2 d_3 , b d_1 d_3 , c d_1 d_2 ]),$$
which is  $G_{\Q(\zeta_1)}$-equivariant by Lemma \ref{lemma:galois-diamond-actions-on-cycles};
\item the middle row arises from the excision exact sequence in \'etale cohomology (cf.\,\cite[(3.6)]{Ja} and \cite[p.\,108]{Milne});
\item   the subscript of $0$ appearing in the rightmost term
in the exact sequence
denotes
the kernel of the cycle class map, i.e.,
$$ H^0_{\et}(\bar\Delta_r^\circ[[a,b,c]],\Z_p)_0 := \ker\left(H^0_{\et}(\bar\Delta_r^\circ[[a,b,c]],\Z_p)_0 \lra 
H^4_{\et}(\bar X_r^3,\Z_p(2)) \right),$$
and the fact that the image of $j$ is contained in $H^0_{\et}(\bar\Delta_r^\circ[[a,b,c]],\Z_p)_0$
follows from Lemma  \ref{lemma:theta-annihilates};
\item  the projection $p$ is the one arising from the K\"unneth decomposition.
\end{itemize}
Taking the pushout and pullback of the extension in 
\eqref{eqn:extension-construction} via the maps $p$ and $j$
yields  an exact sequence of $\Lambda(G_r)[G_\Q(\zeta_1)]$-modules
\begin{equation}
\label{eqn:the-extension}
 \xymatrix{   
0 \ar[r] & H^1_{\et}(\bar X_r,\Z_p)^{\otimes 3}(2) \ar[r]   & 
E_r  \ar[r]
& \Lambda(G_r)(\mhalf)  \ar[r] & 0.}
 \end{equation}
 Taking the $\Lambda(G_r)$-dual of this exact sequence, we obtain
 \begin{equation*}
 \xymatrix{   
0 \ar[r] &   \Lambda(G_r)(\half) \ar[r]   & 
\check{E}_r  \ar[r]
&   H^1_{\et}(\bar X_r,\Z_p)^{\otimes 3}(1)^*    \ar[r] & 0.}
 \end{equation*}
where $M^*$ means the $\Lambda(G_r)$-module obtained from $M$ by letting act $\Lambda(G_r)$ on it by composing with the involution
$\lambda\mapsto \lambda^*$. Twisting this sequence by $(\mhalf)$ and noting that $M^*(\mhalf) \simeq M(\half)^*$ yields an extension
\begin{equation}
\label{eqn:the-extension-bis}
 \xymatrix{   
0 \ar[r] &   \Lambda(G_r) \ar[r]   & 
E'_r  \ar[r]
&   H^1_{\et}(\bar X_r,\Z_p)^{\otimes 3}(1)(\half)^*    \ar[r] & 0.}
 \end{equation}
Since
\begin{equation*} 
H^1_{\et}(\bar X_r,\Z_p)^{\otimes 3}(1)(\half)^*  = \mathrm{Hom}_{\Lambda(G_r)}(H^1_{\et}(\bar X_r,\Z_p)^{\otimes 3}(2)(\half),  \Lambda(G_r)),
\end{equation*}
it follows that
the  cohomology class realizing the extension  $E_r'$
is an element of 
$$H^1(\Q(\zeta_1), \mathrm{Hom}_{\Lambda(G_r)}(H^1_{\et}(\bar X_r,\Z_p)^{\otimes 3}(1)(\half),    \Lambda(G_r))) = H^1(\Q(\zeta_1), \Hr),$$
because the duality afforded by $\langle \!\langle  \ , \ \rangle\!\rangle_r$ is hermitian (and not $\Lambda$-linear). 
When restricted to $G_{\Q(\zeta_r)}$, this class coincides with
   $\hkappa_r[a,b,c]$, and the first assertion follows.

The second assertion is an immediate consequence of the definitions, using 
the Galois equivariance properties of the cycles $\Delta_r[d_1,d_2,d_3]$ given in 
   the first assertion of Lemma \ref{lemma:galois-diamond-actions-on-cycles}.
 \end{proof}
 
 \begin{remark}
 The extension $E_r'$ of \eqref{eqn:the-extension-bis} can also be realised as a subquotient of the  \'etale cohomology group
 $H^3_c(\bar X_r^3\!\!-\!\!\Delta_r^\circ[[a,b,c]],\Z_p)(1)$ with compact supports, 
 in light of the Poincar\'e duality 
 $$ H^3_{\et}(\bar X_r^3\!\!-\!\!\Delta_r^\circ[[a,b,c]],\Z_p)(2) \ \times \ 
 H^3_c(\bar X_r^3\!\!-\!\!\Delta_r^\circ[[a,b,c]],\Z_p)(1) \lra  \Z_p.$$
 \end{remark}

\subsection{$\Lambda$-adic cohomology classes}
Thanks to Lemma \ref{lemma:extend}, we now dispose, for each 
$[a,b,c]\in \mu_{p-1}(\Z_p)^3$, 
of a system
\begin{equation}
\hkappa_r[a,b,c] \in H^1(\Q(\zeta_1),\Hr)
\end{equation}
of cohomology classes indexed by the integers $r\ge 1$, so that $e^* \hkappa_r[a,b,c] \in H^1(\Q(\zeta_1), \Hrord)$.
Let 
$$p_{r+1,r}:   \Lambda(G_{r+1}) \lra  \Lambda(G_r)$$
be the  projection on finite group rings induced from the natural  homomorphism
$  G_{r+1}  \lra   G_r$.
  \begin{lemma}
 \label{lemma:compatibility-bis}
 Let $\gamma_{r+1} \in H^1_{\et}(\bar X_{r+1},\Z_p)^{\otimes 3}(1)$ and 
 $\gamma_r\in H^1_{\et}(\bar X_{r},\Z_p)^{\otimes 3}(1)$ be elements that are compatible under the pushforward
  by $\varpi_1^3$, i.e., that satisfy
 $(\varpi_1^3)_\ast (\gamma_{r+1}) = \gamma_r$. For all $\sigma\in G_{\Q(\zeta_1)}$,
  $$  p_{r+1,r}\left(\hkappa_{r+1}[a,b,c](\sigma)(\gamma_{r+1})\right)  = \hkappa_r[a,b,c](\sigma)(\gamma_r).$$
\end{lemma}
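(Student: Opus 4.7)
The plan is to reduce the desired equality to a cycle-level distribution relation together with the projection formula for Poincar\'e duality. Expanding the left-hand side by the definition of $\langle\!\langle\ ,\ \rangle\!\rangle_r$ in \eqref{eqn:pairing-LambdaG} and observing that $p_{r+1,r}$ merely groups terms according to their image in $G_r$, it suffices to prove, for each fixed $\langle d_1,d_2,d_3\rangle\in G_r$, that
\begin{equation*}
\sum_{\tau \mapsto \langle d_1,d_2,d_3\rangle} \big\langle \kappa_{r+1}[a,b,c](\sigma)^\tau,\, \gamma_{r+1}\big\rangle_{X_{r+1}} \;=\; \big\langle \kappa_r[a,b,c](\sigma)^{\langle d_1,d_2,d_3\rangle},\, \gamma_r\big\rangle_{X_r},
\end{equation*}
where $\tau=\langle d_1',d_2',d_3'\rangle$ ranges over the fiber of $G_{r+1}\to G_r$. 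By Galois-equivariance of $\AJ_{\et}$ combined with Lemma \ref{lemma:galois-diamond-actions-on-cycles}, we have $\kappa_{r+1}[a,b,c]^\tau = \kappa_{r+1}[d_2'd_3'a,\, d_1'd_3'b,\, d_1'd_2'c]$, and similarly for $\kappa_r$.

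Next I would re-parametrise the sum by cycle indices. As $\tau$ runs through the $p^3$-element fiber above $\langle d_1,d_2,d_3\rangle$, the triples $(d_2'd_3'a,\, d_1'd_3'b,\, d_1'd_2'c)$ enumerate bijectively the fiber of $I_{r+1}\to I_r$ above $(e_1,e_2,e_3) := (d_2d_3a,\, d_1d_3b,\, d_1d_2c)$. Indeed, writing $d_i'=d_i+p^rk_i$ with $k_i\in\F_p$, the induced $\F_p$-linear map on fibers has matrix
$\bigl(\begin{smallmatrix} 0 & ad_3 & ad_2 \\ bd_3 & 0 & bd_1 \\ cd_2 & cd_1 & 0 \end{smallmatrix}\bigr)$
of determinant $2abc\,d_1d_2d_3$, which is a unit modulo $p$ since $p$ is odd and the parameters $a,b,c,d_i$ are all units; hence the map is bijective.

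After this re-indexing, the left-hand side becomes $\sum_{(e_1',e_2',e_3')\mapsto (e_1,e_2,e_3)} \langle \AJ_{\et}(\Delta_{r+1}^\circ[e_1',e_2',e_3'])(\sigma),\gamma_{r+1}\rangle_{X_{r+1}}$. Because $\theta_q$ commutes with $\varpi_1^3$, the distribution relation of Lemma \ref{lemma:compat-cycles-proj} extends to the modified cycles $\Delta_{r+1}^\circ$, and together with the linearity of $\AJ_{\et}$ and its compatibility with proper pullback, the cycle sum collapses to $(\varpi_1^3)^*\kappa_r[e_1,e_2,e_3](\sigma)$. The projection formula $\langle (\varpi_1^3)^*\alpha,\beta\rangle_{X_{r+1}} = \langle \alpha,(\varpi_1^3)_*\beta\rangle_{X_r}$ combined with the hypothesis $(\varpi_1^3)_*\gamma_{r+1}=\gamma_r$ then rewrites the pairing as $\langle \kappa_r[e_1,e_2,e_3](\sigma),\gamma_r\rangle_{X_r}$, which agrees with the right-hand side by the diamond-action formula.

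The main subtlety lies in the second step: the fibers of $G_{r+1}\to G_r$ and of $I_{r+1}\to I_r$ admit two natural indexings, and one must check that the quadratic substitution interchanging them is invertible, which reduces to the short mod-$p$ determinant computation above. Once this bookkeeping is in place, the rest is a formal manipulation of the projection formula and the functoriality of the \'etale Abel-Jacobi map.
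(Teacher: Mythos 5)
Your proof is correct and follows essentially the same route as the paper: expand the hermitian pairing, reduce to a per-fiber identity, collapse the inner sum via the distribution relation of Lemma \ref{lemma:compat-cycles-proj} to a pullback $(\varpi_1^3)^*\kappa_r$, and finish with the projection formula for Poincar\'e duality. The only real difference is bookkeeping: the paper packages the inner sum as $(\mu^3)^*(\mu^3)_*$ applied to a single lift (absorbing the re-indexing of the fiber of $G_{r+1}\to G_r$ against that of $I_{r+1}\to I_r$ into the proof of the distribution relation), whereas you carry out that re-indexing explicitly and verify the bijectivity via the determinant $2abc\,d_1d_2d_3 \not\equiv 0 \pmod p$. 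Your determinant computation is correct (the paper's multiplicative parametrisation $d_i' = d_i(1+p^r k_i)$ would give the cleaner matrix $\bigl(\begin{smallmatrix}0&1&1\\1&0&1\\1&1&0\end{smallmatrix}\bigr)$ of determinant $2$, but both are equally valid). One small terminological slip: the action $\tau$ on $\kappa_{r+1}[a,b,c]$ in the pairing is the diamond-operator action, not the Galois action, so the equivariance of $\AJ_{\et}$ you invoke is Hecke-equivariance rather than Galois-equivariance; the content is unaffected.
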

\begin{proof}
This  amounts to the statement that
$$ p_{r+1,r}(\langle\!\langle \kappa_{r+1}[a,b,c], \gamma_{r+1}\rangle\!\rangle_{r+1}) = 
\langle\!\langle \kappa_{r}[a,b,c], \gamma_{r}\rangle\!\rangle_{r}.$$
But the left-hand side of this equation is equal to
$$
  \sum_{ G_r } \langle  (\mu^3)^* (\mu^3)_* \kappa_{r+1}[a d_2' d_3', b d_1' d_3', c d_1' d_2'], \gamma_{r+1} \rangle_{X_{r+1}}  \cdot \langle d_1,d_2,d_3 \rangle, 
$$
where  the sum runs over $\langle d_1,d_2,d_3\rangle \in   G_r$ and  $\langle d_1',d_2', d_3'\rangle$ denotes an (arbitrary) lift of $\langle d_1,d_2,d_3\rangle$
 to $  G_{r+1}$. 
The third  assertion in Lemma \ref{lemma:compat-cycles-proj}  allows us to rewrite this as
\begin{eqnarray*}
 & & \!\!\!\!\!\!\!\!\!\!\!\!\!\!\!\!\!\!\!\!\!\!\!\!
 \sum_{   G_r} \langle  (\varpi_1^3)^\ast \kappa_{r}[a d_2 d_3, b d_1 d_3 , c d_1 d_2], \gamma_{r+1} \rangle_{X_{r+1}}  \cdot \langle d_1,d_2,d_3\rangle \\
 &=&  \sum_{   G_r} \langle  \kappa_{r}[a d_2 d_3, b d_1 d_3, c d_1 d_2],  (\varpi_1^3)_\ast\gamma_{r+1} \rangle_{X_{r}}  \cdot \langle d_1,d_2,d_3\rangle \\ 
 &=&  \sum_{   G_r} \langle  \kappa_{r}[a d_2 d_3, b d_1 d_3, c d_1 d_2],  \gamma_{r} \rangle_{X_{r}}  \cdot \langle d_1,d_2,d_3\rangle \\ 
 &=& \langle\!\langle \kappa_{r}[a,b,c], \gamma_{r}\rangle\!\rangle_{r},
 \end{eqnarray*}
 and the result follows.
\end{proof}

Define
\begin{eqnarray}\label{Hinf}
\Hinfty   :=    &      \mathrm{Hom}_{\Lambda(G_\infty)}(H^1_{\et}(\bar X_\infty^\ast, \Z_p)^{\otimes 3}(1)(\half), \Lambda(G_\infty)) \\
&  \nonumber   =  \mathrm{Hom}_{\Lambda(G_\infty)}(H^1_{\et}(\bar X_1,\cL_\infty^\ast)^{\otimes 3}(1)(\half), \Lambda(G_\infty)),
\end{eqnarray}
where the identification follows from \eqref{eqn:crucial-relation-for-induced}.

Thanks to Lemma \ref{lemma:compatibility-bis},  the classes $\hkappa_r[a,b,c]$
can be packaged into a  compatible  collection. Namely:

\begin{definition}\label{def-kappa-abc}
Set
\begin{equation}\label{kappa-abc}
\hkappa_\infty[a,b,c]  := \left(\hkappa_r[a,b,c]\right)_{r\ge 1} \in  H^1(\Q(\zeta_1),  \Hinfty).
\end{equation}
\end{definition}

It will also be useful to replace the classes
 $\hkappa_\infty[a,b,c]$
 by elements that are essentially  indexed by  triples
 $$ (\omega_1,\omega_2,\omega_3) : (\Z/p\Z^\times)^3 \lra \Z_p^\times$$
 of  tame characters of $\tilde G_r/G_r$. 
 Assume that  the product 
 $\omega_1\omega_2\omega_3$ is an {\em even }   character.
 This assumption is equivalent to requiring that 
 $$ \omega_1 \omega_2\omega_3 = \delta^2, \quad\mbox{ for some } \delta: (\Z/p\Z)^\times  \lra \Z_p^\times.$$
 Note that  for a given  $(\omega_1,\omega_2,\omega_3)$,   there are in fact two  characters $\delta$ as above,
  which differ by the unique quadratic
 character of conductor $p$.
 With the  choices  of $\omega_1,\omega_2,\omega_3$ and $\delta$ in hand, we set 
 \begin{equation}
 \label{eqn:cycles-tame}
  \hkappa_\infty(\omega_1,\omega_2,\omega_3;\delta)  :=   \frac{p^3}{(p-1)^3} \, \cdot \sum_{[a,b,c]}  
\delta^{-1}(abc)  \cdot \omega_{1}(a) \omega_2(b) \omega_3(c) 
 \cdot\hkappa_\infty[bc, ac,ab],
\end{equation}
where  the sum is taken over the triples $[a,b,c]$ of $(p-1)$st roots of unity in $\Z_p^\times$.
The classes $\hkappa_\infty(\omega_1,\omega_2,\omega_3;\delta)$   satisfy the following properties.
 \begin{lemma}
 \label{lemma:iwasawa-cycles}
 For all $\sigma_m \in \Gal(\Q(\zeta_\infty)/\Q)$, 
$$ \sigma_m \hkappa_\infty(\omega_1,\omega_2,\omega_3;\delta) = \delta(m) \hkappa_\infty(\omega_1,\omega_2,\omega_3;\delta).$$
For all diamond operators $\langle a_1, a_2,a_3\rangle \in \mu_{p-1}(\Z_p)^3$ 
$$ \langle a_1,a_2,a_3\rangle
 \hkappa_\infty(\omega_1,\omega_2,\omega_3;\delta) = \omega_{123}(a_1,a_2,a_3) \cdot
     \hkappa_\infty(\omega_1,\omega_2,\omega_3;\delta).$$
\end{lemma}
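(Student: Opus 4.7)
The plan is to reduce both assertions to two basic transformation identities: $\sigma_m \hkappa_\infty[a,b,c] = \hkappa_\infty[ma,mb,mc]$ for $m \in \mu_{p-1}(\Z_p)$, furnished by Lemma~\ref{lemma:extend}, and the analogous diamond rule $\langle a_1, a_2, a_3\rangle \hkappa_\infty[d_1, d_2, d_3] = \hkappa_\infty[a_2 a_3 d_1, a_1 a_3 d_2, a_1 a_2 d_3]$ obtained by transporting \eqref{eqn:diamond-cycles} through the $p$-adic \'etale Abel--Jacobi map and the hermitian pairing $\langle\!\langle\,,\,\rangle\!\rangle$ (the diamond is self-adjoint up to inversion, and this inversion is cancelled when one passes from the cycle to its class in the $\Lambda$-adic dual $\Hinfty$).

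First I would attack the second (diamond) assertion. Applying the diamond rule termwise in \eqref{eqn:cycles-tame} and making the change of summation variables $(a, b, c) \mapsto (a_1 a, a_2 b, a_3 c)$, which brings the indices of $\hkappa_\infty$ back into the form $[b'c', a'c', a'b']$, the character factors $\delta^{-1}(abc)$ and $\omega_i(a_i^{-1} a')$ collect into a global coefficient built from $\delta(a_1 a_2 a_3)$ and $\omega_i(a_i)^{\pm 1}$; using the self-duality hypothesis $\omega_1 \omega_2 \omega_3 = \delta^2$, this reduces to $\omega_{123}(a_1, a_2, a_3) = \omega_1(a_1)\omega_2(a_2)\omega_3(a_3)$. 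I would then deduce the first (Galois) assertion from this: for $m = \lambda^2$ a square in $\mu_{p-1}(\Z_p)$, the cycle relation \eqref{rootm} transfers through Abel--Jacobi to the identity $\sigma_m \hkappa_\infty[a, b, c] = \langle \lambda, \lambda, \lambda\rangle \hkappa_\infty[a, b, c]$ on every basic class, so by linearity the same holds after summing, and specializing the diamond formula to the diagonal diamond $\langle \lambda, \lambda, \lambda\rangle$ produces the scalar $(\omega_1 \omega_2 \omega_3)(\lambda) = \delta^2(\lambda) = \delta(m)$.

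The main obstacle I anticipate is the case of a non-square $m \in \mu_{p-1}(\Z_p)$, where $\sqrt m$ does not lie in $\mu_{p-1}$ and the preceding diagonal-diamond substitution cannot be performed directly. My approach here would be to extend scalars to the unramified quadratic extension $\Z_p[\sqrt m]$ in which $\sqrt m$ becomes available, carry out the diagonal diamond computation there, and then descend; the final factor $\delta(m)$ lies in $\Z_p$ and is insensitive to the choice of $\sqrt m$ because the hypothesis $\omega_1 \omega_2 \omega_3 = \delta^2$ forces $\delta(-1) = 1$ (equivalently, the class $\hkappa_\infty(\omega_1, \omega_2, \omega_3; \delta)$ is already zero when $\delta(-1) = -1$, by the involution $(a, b, c) \leftrightarrow (-a, -b, -c)$ on the defining sum). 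The wild part $m \in 1 + p\Z_p$ contributes nothing, since $\delta$ factors through $(\Z/p\Z)^\times$ and the $(\half)$-twist built into $\Hinfty$ exactly absorbs wild Galois.
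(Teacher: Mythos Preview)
Your overall strategy --- apply the transformation rules for the basic classes $\hkappa_\infty[d_1,d_2,d_3]$ termwise in the defining sum and re-index --- is precisely what the paper means by ``a direct calculation based on the definitions''. However, two of your steps do not go through as written. First, in the diamond computation your substitution $(a,b,c)\mapsto(a_1a,a_2b,a_3c)$ (reading \eqref{eqn:cycles-tame} literally with index $[bc,ac,ab]$) yields the coefficient $\delta(a_1a_2a_3)\,\omega_1^{-1}(a_1)\omega_2^{-1}(a_2)\omega_3^{-1}(a_3)$, and this does \emph{not} simplify to $\omega_1(a_1)\omega_2(a_2)\omega_3(a_3)$ from $\delta^2=\omega_1\omega_2\omega_3$ alone: that relation only pins down $\delta(x)^2$, not $\delta(a_i)$ against $\omega_i(a_i)^2$ individually. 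Second, your treatment of non-square $m$ by ``extending scalars to $\Z_p[\sqrt m]$'' cannot work: diamond operators are geometric automorphisms of $X_r^3$ indexed by $(\Z/p^r\Z)^{\times 3}$, and enlarging the coefficient ring does not produce a new diamond $\langle\sqrt m,\sqrt m,\sqrt m\rangle$. At best you obtain an abstract operator square root of $\langle m,m,m\rangle$, but identifying that square root with $\sigma_m$ on the class is exactly the statement you are trying to prove.

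Both difficulties evaporate once you notice that the index in \eqref{eqn:cycles-tame} should be $[a,b,c]$ rather than $[bc,ac,ab]$: compare \eqref{def-cor315}, which uses $\Delta_1^\circ[a,b,c]$, and observe that Corollary~\ref{cor:special-klm} only follows ``directly from the definitions'' via Theorem~\ref{AJklm-thm} if the two indexing conventions match. With the index $[a,b,c]$, the Galois assertion follows for \emph{every} $m\in\mu_{p-1}$ by the single substitution $(a,b,c)\mapsto(ma,mb,mc)$, giving coefficient $\delta^3(m)\cdot(\omega_1\omega_2\omega_3)^{-1}(m)=\delta^3(m)\delta^{-2}(m)=\delta(m)$ with no square-root detour. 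The diamond assertion follows from $(a,b,c)\mapsto(a_2a_3a,\,a_1a_3b,\,a_1a_2c)$, whose coefficient $\delta^2(a_1a_2a_3)\cdot\omega_1^{-1}(a_2a_3)\omega_2^{-1}(a_1a_3)\omega_3^{-1}(a_1a_2)$ genuinely collapses to $\omega_1(a_1)\omega_2(a_2)\omega_3(a_3)$ once you substitute $\delta^2=\omega_1\omega_2\omega_3$.
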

\begin{proof} This follows from a direct calculation based on the definitions, using the compatibilities of Lemma 
\ref{lemma:galois-diamond-actions-on-cycles}
satisfied by the cycles $\Delta_r[d_1,d_2,d_3]$.
\end{proof}

The classes $\hkappa_\infty[a,b,c]$  and 
$\hkappa_\infty(\omega_1,\omega_2,\omega_3;\delta)$ are
 called the  {\em $\Lambda$-adic cohomology classes} attached to the
triple 
$[a,b,c]\in \mu_{p-1}(\Z_p)^3$ or the quadruple $(\omega_1,\omega_2,\omega_3;\delta)$. 
As will be explained in the next section,  they are  three variable families of cohomology classes parametrised by
 points in  the triple product   $\cW\times \cW\times \cW$ of weight spaces, 
 and taking values in the three-parameter family of self-dual Tate twists of the Galois representations attached
 to the different specialisations of 
 a triple of Hida families.
 

\section{Higher weight balanced specialisations}\label{sec:higherweight}

For every integer $\kk \ge 0$ define
$$
W_1^\kk := H^1_{\et}(\bar X_1, \cH^\kk)
$$
and recall from the combination of  \eqref{eqn:crucial-relation-for-induced}, \eqref{eqn:mom-bis} and \eqref{eqn:wtk-specialisation} the  specialisation map
\begin{equation}
\label{eqn:specialisation-maps-bis}
\spec_\kk^\ast: H^1_{\et}(\bar X_\infty^\ast,\Z_p) =  H^1_{\et}(\bar X_1,  \cL_\infty^\ast) \lra  W_1^\kk. 
\end{equation}

Fix throughout this section a  triple
  $$k = \kk+2, \qquad \ell = \llll+2, \qquad m = \mm+2$$
of  integers $\ge 2$  for which $\kk+\llll+\mm= 2t$ is even. Let 
  $$ \cH^{\kk,\llll,\mm} := \cH^\kk \boxtimes \cH^\llll \boxtimes \cH^\mm$$
  viewed as a sheaf on $ X_1^3$, and
$$
W_1^{\kk,\llll,\mm} := W_1^\kk\otimes W_1^\llll \otimes W_1^\mm (2-t).  
$$

As one readily checks, the $p$-adic Galois representation  $W_1^{\kk,\llll,\mm}$  is 
 Kummer self-dual, i.e., there is an isomorphism of $G_{\Q}$-modules
$$
\mathrm{Hom}_{G_{\Q}}(W_1^{\kk,\llll,\mm},\Z_p(1)) \simeq W_1^{\kk,\llll,\mm}.
$$

 The specialisation maps 
give rise, in light of \eqref{eqn:spec-klm-Lambda-tilde}, to the triple 
product specialisation map
\begin{equation}\label{spklm}
\spec_{\kk,\llll,\mm}^\ast:= \spec_\kk^\ast \otimes  \spec_\llll^\ast \otimes \spec_\mm^\ast:   
\Hinfty \lra  W_1^{\kk,\llll,\mm}
\end{equation}
and to the associated collection of specialised classes
\begin{equation}
 \label{eqn:wt-klm-specialised-classes}
 \kappa_1(\kk,\llll,\mm)[a,b,c] := \spec_{\kk,\llll,\mm}(\hkappa_\infty[a, b,c]) \in  H^1(\Q(\zeta_1), W_1^{\kk,\llll,\mm}).
 \end{equation}
 
 Note that for $(\kk,\llll,\mm)=(0,0,0)$, it follows from the definitions (cf.\,e.g.\,the proof of Lemma \ref{lemma:extend}) that the class $\kappa_1(\kk,\llll,\mm)[a,b,c]$ is simply the image under the \'etale Abel-Jacobi map of the cycle $\Delta_1^\circ[a,b,c]$. 
 
 The main goal  of this section is to offer a similar geometric description for the above classes also when $(k,\ell,m)$ is {\em balanced} and $\kk,\llll,\mm>0$, which
 we assume henceforth for the remainder of this section.
 
 In order to do this, it shall be useful to dispose of an alternate description of 
the extension  \eqref{eqn:the-extension} in terms of the \'etale cohomology of the (open)
three-fold $X_1^3-|\Delta_1^\circ[a,b,c]|$  with values in appropriate sheaves.

   \begin{lemma}
 \label{lemma:disposing-of-kings-sheaf} Let $\cL_r^{\ast \boxtimes{3}}$ denote the exterior tensor product of $\cL_r^{\ast}$, over the triple product  $ X_1^3$.  There is a commutative diagram 
  $$ \xymatrix{     H^3_{\et}(X_r^3,\Z_p)(2)\ar[r] 
   \ar@{=}[d] 
&  H^3_{\et}(X_r^3-\Delta_r^\circ[[a,b,c]],\Z_p)(2)  
 \ar@{=}[d] 
  \ar[r] 
  &
  H^0_{\et}(\Delta_r^\circ[[a,b,c]],\Z_p)         
   \ar@{=}[d]      \\    
    H^3_{\et}(\bar X_1^3, \cL_r^{\ast \boxtimes 3})(2) \ar[r] &
    H^3_{\et}(\bar X_1^3- |\Delta_1^\circ[a,b,c]|,\cL_r^{\ast \boxtimes 3})(2) \ar[r] &
    H^0_{\et}(|\Delta_1^\circ[a,b,c]|,  \cL_r^{\ast \otimes 3})),   }   
   $$
  in which the leftmost maps are injective and the  horizontal sequences are exact.
  \end{lemma}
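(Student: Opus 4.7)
The plan is to establish the lemma in two essentially independent steps: the three vertical identifications come from Shapiro's lemma applied to the finite covers between modular curves, while the two horizontal exact sequences (and the injectivity on the left) come from the excision long exact sequence in \'etale cohomology combined with cohomological purity for a cycle of codimension two.

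For the vertical arrows, I would first recall that any finite morphism $f: X \to Y$ of noetherian schemes satisfies $R^i f_\ast = 0$ for all $i\ge 1$, so the Leray spectral sequence for $f_\ast$ degenerates into the canonical isomorphisms $H^i_{\et}(\bar Y, f_\ast F) \simeq H^i_{\et}(\bar X, F)$. This is precisely the mechanism behind the identification $H^i_{\et}(\bar X_r, \Z_p) = H^i_{\et}(\bar X_1, \cL_r^\ast)$ recorded in \eqref{eqn:crucial-relation-for-induced}. Pushforward along a product of finite morphisms commutes with the external tensor product of sheaves, a routine fact verified at stalks, so $(\varpi_1^{r-1})^3_\ast \Z_p = \cL_r^{\ast\boxtimes 3}$, which supplies the leftmost vertical identification. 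By \eqref{DeltaDoubleBracket} the cycle $\Delta_r^\circ[[a,b,c]]$ is the scheme-theoretic preimage of $|\Delta_1^\circ[a,b,c]|$ under $(\varpi_1^{r-1})^3$, so the restriction of this morphism to the open complements is again finite (being a base change of a finite morphism), and the same Leray argument yields the middle identification. The rightmost identification is obtained by proper base change along the closed immersion of $|\Delta_1^\circ[a,b,c]|$ into $X_1^3$, together with the observation that the restriction of a box product to a (twisted) diagonal is the tensor product of the factors.

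For the horizontal rows I would invoke the excision long exact sequence in \'etale cohomology, already used in the proof of Lemma \ref{lemma:extend} (see \cite[(3.6)]{Ja} and \cite[p.\,108]{Milne}). Applied to the closed pair $\Delta_r^\circ[[a,b,c]]\subset X_r^3$, its relevant piece reads
\[
H^3_{\Delta_r^\circ[[a,b,c]]}(\bar X_r^3,\Z_p(2)) \to H^3_{\et}(\bar X_r^3,\Z_p(2)) \to H^3_{\et}(\bar X_r^3-\Delta_r^\circ[[a,b,c]],\Z_p(2)) \to H^4_{\Delta_r^\circ[[a,b,c]]}(\bar X_r^3,\Z_p(2)).
\]
Since $\Delta_r^\circ[[a,b,c]]$ has pure codimension two inside the smooth projective threefold $X_r^3$, cohomological purity forces the leftmost local cohomology group to vanish, yielding the injectivity claim, while Gysin purity identifies the rightmost local cohomology group with $H^0_{\et}(\Delta_r^\circ[[a,b,c]],\Z_p)$, producing the third term of the top row. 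The analogous sequence for the bottom row — namely the excision sequence for $|\Delta_1^\circ[a,b,c]|\subset X_1^3$ with coefficients in $\cL_r^{\ast\boxtimes 3}$ — is handled identically, the cycle $|\Delta_1^\circ[a,b,c]|$ being again of codimension two. Commutativity of the diagram is then just the naturality of the localization sequence under the finite pushforward $(\varpi_1^{r-1})^3$, which is built into the construction of the vertical identifications.

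The main technical point requiring care will be the Gysin identification of the rightmost local cohomology group: this requires that each irreducible component of $\Delta_r^\circ[[a,b,c]]$ (and of $|\Delta_1^\circ[a,b,c]|$) be smooth of pure codimension two, and that pairwise intersections between distinct components — coming either from different summands in the Hecke correspondence $\theta_q$ entering \eqref{mdc} or from different $\SL_2(\Z/p^r\Z)$-orbits in the fiber of $(\varpi_1^{r-1})^3$ — occur in codimension strictly greater than two, so that they do not corrupt the Gysin computation. This reduces to an explicit moduli-theoretic inspection of the twisted diagonal embeddings constructed in Lemma \ref{lemma:geometric-components}, and is manageable because each such embedding is a finite morphism from the smooth projective curve $\XX(p^r)$ onto its image, generically injective on geometric points.
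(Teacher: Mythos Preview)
Your proposal is correct and follows essentially the same approach as the paper: Shapiro's lemma (via the pushforward identification $\cL_r^{\ast\boxtimes 3}=(\varpi_1^{r-1})^3_\ast\Z_p$ and the fiber-product description \eqref{DeltaDoubleBracket}) for the vertical isomorphisms, and the excision sequence of \cite[(3.6)]{Ja}, \cite[p.\,108]{Milne} for the horizontal rows. The paper's proof is considerably terser and does not spell out the purity/Gysin step or the smoothness discussion you flag at the end; these concerns are legitimate but are absorbed into the cited references, so your extra paragraph is cautious rather than essential.
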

  
\begin{proof} 
Recall from \eqref{def:Lr-sheaf}
 that 
 $$ \cL_r^{\ast \boxtimes 3} = 
(\varpi_1^{r-1} \times \varpi_1^{r-1} \times \varpi_1^{r-1})_\ast \Z_p,$$
where $$\varpi_1^{r-1}  \times \varpi_1^{r-1} \times\varpi_1^{r-1}: X_r^3 \lra  X_1^3$$
is defined as  in 
\eqref{eqn:varpi1r3}.  The vertical isomorphisms then follow from Shapiro's lemma and the definition of $\Delta_r^\circ[[a,b,c]]$ in \eqref{DeltaDoubleBracket}. The horizontal sequence arises from the excision exact sequence in 
 \'etale cohomology of \cite[(3.6)]{Ja} and \cite[p.\,108]{Milne}.
 \end{proof}

  \begin{lemma}
  \label{lemma:clebsch-gordan}
   For all  $[a,b,c] \in I_1$,
     $$ H^0_{\et}(\bar\Delta_{1}[a,b,c], \cH^{\kk,\llll,\mm}) = \Z_p(t).$$
  \end{lemma}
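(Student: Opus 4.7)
The plan is to identify the restricted sheaf on $\bar\Delta_1[a,b,c]$ with an internal tensor product on $\bar\XX(p)$, and then to compute global sections by invoking the classical $\SL_2$-decomposition of a triple tensor product of symmetric powers.

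First I would choose a representative $(v_1,v_2,v_3)\in\Sigma_1[a,b,c]$ and use Lemma \ref{lemma:geometric-components} to identify $\XX(p)$ with $\Delta_1[a,b,c]$ via $\varphi_{(v_1,v_2,v_3)}$. Each projection $\XX(p)\to X_1$ in this embedding sends $(A,P,Q)\mapsto (A,x_iP+y_iQ)$, which on the underlying elliptic curve is the identity on $A$. Consequently the pullback of $\cH=R^1\pi_*\Z_p(1)$ along any of the three factors of $X_1^3$ agrees with the single sheaf $\cH$ on $\XX(p)$, and the pullback of the exterior tensor product $\cH^{\kk,\llll,\mm}$ is the internal tensor product $\cH^\kk\otimes\cH^\llll\otimes\cH^\mm$. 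Thus
\[
H^0_{\et}(\bar\Delta_1[a,b,c],\cH^{\kk,\llll,\mm})\;=\;H^0_{\et}(\bar\XX(p),\cH^\kk\otimes\cH^\llll\otimes\cH^\mm).
\]

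Second, since $\XX(p)$ is geometrically connected, this $H^0$ equals the module of $\pi_1^{\et}(\bar\XX(p))$-invariants in the geometric-stalk representation $\mathrm{TSym}^\kk W\otimes \mathrm{TSym}^\llll W \otimes \mathrm{TSym}^\mm W$, where $W=T_pA$ is the stalk of $\cH$. The image of geometric monodromy in $\GL(W)=\GL_2(\Z_p)$ preserves the Weil pairing and is trivial modulo $p$, so it lies in the principal congruence subgroup $\Gamma(p)\subset \SL_2(\Z_p)$, which is Zariski dense in $\SL_2$. By the density theorem these invariants coincide with the $\SL_2$-invariants of the algebraic representation.

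Third, I would invoke the classical Clebsch-Gordan decomposition for $\SL_2$: the balanced condition $\kk+\llll+\mm=2t$ together with $\kk,\llll,\mm>0$ forces
\[
(\mathrm{TSym}^\kk W\otimes \mathrm{TSym}^\llll W\otimes \mathrm{TSym}^\mm W)^{\SL_2}
\]
to be free of rank one over $\Z_p$, spanned by the element built out of $t_1=(-\kk+\llll+\mm)/2$, $t_2=(\kk-\llll+\mm)/2$, and $t_3=(\kk+\llll-\mm)/2$ applications of the inverse symplectic form between the respective pairs of tensor factors. The Galois action on this invariant line is then read off from the fact that the Weil pairing furnishes a canonical $G_\Q$-equivariant isomorphism $\wedge^2\cH\cong \Z_p(1)$: each of the $t_1+t_2+t_3=t$ symplectic contractions contributes a Tate twist, so $G_\Q$ acts on the invariant line through $\varepsilon_{\cyc}^t$, giving $H^0\cong\Z_p(t)$.

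The main obstacle is the integrality statement $H^0=\Z_p(t)$ rather than merely a $\Z_p$-lattice in $\Q_p(t)$: one must verify that the Clebsch-Gordan invariant spans a \emph{primitive} line inside the integral lattice $\mathrm{TSym}^\kk W\otimes \mathrm{TSym}^\llll W\otimes \mathrm{TSym}^\mm W$. This is precisely where the use of $\mathrm{TSym}$ (symmetric tensors / divided powers) rather than $\mathrm{Sym}$ (quotient symmetric powers) is essential, since the iterated symplectic contractions land without $p$-power denominators in the symmetric-tensor lattice; once this primitivity is in hand the identification $H^0_{\et}(\bar\Delta_1[a,b,c],\cH^{\kk,\llll,\mm})=\Z_p(t)$ follows.
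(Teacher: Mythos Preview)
Your proposal is correct and follows essentially the same approach as the paper: both reduce to the $\SL_2$-invariants in the triple tensor product via Clebsch--Gordan and read off the Tate twist from the Weil pairing. The paper is terser, simply writing down the explicit invariant section $(X_2\otimes Y_3-Y_2\otimes X_3)^{\otimes\kk'}\otimes\cdots$ at each stalk rather than passing through the monodromy/Zariski-density argument you spell out, and it does not dwell on the $\mathrm{TSym}$ integrality point you raise; but the underlying idea is the same.
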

  \begin{proof} The Clebsch-Gordan formula asserts 
   that  the space of tri-homogenous polynomials
  in $6= 2 + 2 + 2$ variables  of tridegree $(\kk,\llll,\mm)$ has a unique $\SL_2$-invariant element, namely, the polynomial
  $$  P_{\kk,\llll,\mm}(x_1,y_1,x_2,y_2,x_3,y_3) = \left|\begin{array}{cc} x_2 & y_2 \\ x_3 & y_3 \end{array} \right|^{\kk'} 
   \left|\begin{array}{cc} x_3 & y_3 \\ x_1 & y_1 \end{array} \right|^{\llll'}  \left|\begin{array}{cc} x_1 & y_1 \\ x_2 &  y_2 \end{array} \right|^{\mm'},$$
   where 
   $$ \kk'  =  \frac{-\kk+\llll +\mm}{2}, \qquad \llll'=\frac{\kk-\llll+\mm}{2}  \qquad \mm'  = \frac{\kk+\llll -\mm}{2}.$$
  
 Since the triplet of weights is balanced, it follows that $\kk', \llll', \mm' \geq 0$.  From the Clebsch-Gordan formula it follows that $H^0_{\et}(\bar\Delta_{1}[a,b,c], \cH^{\kk,\llll,\mm})$ is spanned by the global section whose stalk at a point $((A,P_1),(A,P_2),(A,P_3)) \in \Delta_{1}[a,b,c]$ as in \eqref{eqn:intrinsic-cycle} is given by   
  $$  (X_2 \otimes Y_3 - Y_2\otimes X_3)^{\otimes\kk'} \otimes
  (X_1 \otimes Y_3 - Y_1\otimes X_3)^{\otimes \llll'} \otimes (X_1 \otimes Y_2 - Y_1\otimes X_2)^{\otimes \mm'},$$
   where $(X_i,Y_i)$, $i=1,2,3$, is a basis of the stalk of $\cH$ at the point $(A,P_i)$ in $X_1$. The Galois action is given by the $t$-th power of the cyclotomic character because the Weil pairing takes values in $\Z_p(1)$ and $\kk' + \llll' + \mm' = t$.
\end{proof}

Write $\cl_{\kk,\llll,\mm}(\Delta_1[a,b,c]) \in H^0_{\et}(|\bar\Delta^\circ_1[a,b,c]|,\cH^{\kk,\llll,\mm})$ 
for the   standard generator  given by Lemma \ref{lemma:clebsch-gordan}.
Define
\begin{equation}\label{AJklm}
 \AJ_{\kk,\llll,\mm}(\Delta_1[a,b,c]) \in H^1(\Q(\zeta_1), W_1^{\kk,\llll,\mm})
\end{equation}
to be the extension class constructed by pulling back by $j$ and pushing forward by $p$ in
   the exact sequence of the middle row of the following diagram:
   \begin{equation}
\label{eqn:extension-construction-klm}
 \xymatrix{  & & \Z_p(t)\ar@{^{(}->}[d]^j \\
 H^3_{\et}(\bar X_1^3,\cH^{\kk,\llll,\mm})(2) \ar@{^{(}->}[r] \ar@{>>}[d] & 
H^3_{\et}(\bar X_1^3 \!\!-\!\!\bar\Delta, \cH^{\kk,\llll,\mm})(2) \ar@{->>}[r]
& H^0_{\et}(\bar\Delta,\cH^{\kk,\llll,\mm})    \\
   W_1^{\kk,\llll,\mm}(t),}
 \end{equation}
 where 
 \begin{itemize}
 \item $\Delta=\Delta_1[a,b,c]$;
 \item
   the map $j$ is the $G_{\Q(\zeta_1)}$-equivariant inclusion
 defined by $ j(1) =  \cl_{\kk,\llll,\mm}(\Delta);$
\item 
the surjectivity of the right-most horizontal row follows from the vanishing of the group
$H^4_{\et}(\bar X_1^3,\cH^{\kk,\llll,\mm})$, 
which in turn is a consequence of the K\"unneth formula and the vanishing of the terms
$H^2_{\et}(\bar X_1,\cH^{\kk})$ when $\kk>0$  (cf.\,\cite[Lemmas 2.1, 2.2]{BDP}). 
\end{itemize}
In particular  the image of $j$ lies in the image of  the right-most horizontal row and this
 holds regardless whether the cycle is null-homologous or not. 
The reader may compare this construction with \eqref{eqn:extension-construction},
 where the cycle $\Delta_r^\circ[[a,b,c]]$ is null-homologous and this property was crucially exploited.

 
  \begin{theorem}\label{AJklm-thm} Set $ \AJ_{\kk,\llll,\mm}(\Delta_1^\circ[a,b,c]) = \theta_q  \AJ_{\kk,\llll,\mm}(\Delta_1[a,b,c])$. Then
  the   identity 
   $$\kappa_1(\kk,\llll,\mm)[a,b,c] = \AJ_{\kk,\llll,\mm}(\Delta_1^\circ[a,b,c])$$
   holds in $H^1(\Q(\zeta_1), W_1^{\kk,\llll,\mm})$.
   \end{theorem}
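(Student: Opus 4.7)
The plan is to compare both sides as classes of $G_{\Q(\zeta_1)}$-extensions and then specialise. Starting with the extension \eqref{eqn:the-extension} whose dual realises $\hkappa_r[a,b,c]$, apply Lemma \ref{lemma:disposing-of-kings-sheaf} to reinterpret its middle row as the excision sequence of $X_1^3$ with coefficients in $\cL_r^{*\boxtimes 3}$, relative to the closed subscheme $|\Delta_1^\circ[a,b,c]|$. Under this reinterpretation, the inclusion $j$ sends the group-like element $\langle d_1,d_2,d_3\rangle$ to the section of $\cL_r^{*\boxtimes 3}$ over $|\Delta_1^\circ[a,b,c]|$ whose stalk at $((A,P_1),(A,P_2),(A,P_3))$ encodes, via the definition of $\cL_r^*$, the triples $(Q_1,Q_2,Q_3) \in A[p^r]\langle P_1\rangle \times A[p^r]\langle P_2\rangle \times A[p^r]\langle P_3\rangle$ that satisfy the Weil-pairing conditions \eqref{eqn:intrinsic-cycle} with the shifted exponents cut out by the determinant recipe of Lemma \ref{lemma:galois-diamond-actions-on-cycles}, i.e., those describing $\Delta_r^\circ[ad_2d_3, bd_1d_3, cd_1d_2]$.

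Next, pass to the inverse limit over $r$: using Mittag-Leffler together with \eqref{eqn:crucial-relation-for-induced}, the class $\hkappa_\infty[a,b,c]$ is realised by the $\Lambda(G_\infty)[G_{\Q(\zeta_1)}]$-extension of $\Lambda(G_\infty)(\mhalf)$ by $H^1_{\et}(\bar X_1, \cL_\infty^*)^{\otimes 3}(2)$ built by pullback along an inclusion $j_\infty: \Lambda(G_\infty)(\mhalf) \hookrightarrow H^0_{\et}(|\Delta_1^\circ[a,b,c]|, \cL_\infty^{*\boxtimes 3})$ described as above in the inverse system. Now apply the sheaf-level specialisation $\spec_\kk^{*} \boxtimes \spec_\llll^{*} \boxtimes \spec_\mm^{*}: \cL_\infty^{*\boxtimes 3} \to \cH^{\kk,\llll,\mm}$ and the character specialisation $\nu_{\kk,\llll,\mm}: \Lambda(G_\infty)(\mhalf) \otimes_{\nu_{\kk,\llll,\mm}} \Z_p \simeq \Z_p(t)$ from \eqref{eqn:spec-klm-Lambda-tilde} (the twist $\omega^t$ is trivial on $G_{\Q(\zeta_1)}$). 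The excision sequence is compatible with both specialisations, so the induced extension agrees with the one obtained by pulling back the middle row of \eqref{eqn:extension-construction-klm} along the specialised image of $j_\infty$, and whose class, by \eqref{eqn:wt-klm-specialised-classes}, is $\kappa_1(\kk,\llll,\mm)[a,b,c]$.

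The remaining step is to identify that specialised inclusion with the map $j:\Z_p(t) \hookrightarrow H^0_{\et}(|\bar\Delta_1^\circ[a,b,c]|, \cH^{\kk,\llll,\mm})$ of diagram \eqref{eqn:extension-construction-klm}. Stalkwise, the specialisation $\spec_\kk^{*} \boxtimes \spec_\llll^{*} \boxtimes \spec_\mm^{*}$ sends a triple $[(Q_1,Q_2,Q_3)]$ to $Q_1^\kk \otimes Q_2^\llll \otimes Q_3^\mm \in \cH^\kk\otimes\cH^\llll\otimes\cH^\mm$, and the image of $j_\infty$ after taking the tensor product with $\Z_p$ over $\nu_{\kk,\llll,\mm}$ collapses, by the distribution relations of Lemma \ref{lemma:compat-cycles-proj} and the $\SL_2(\Z/p^r\Z)$-invariance built into the moduli description of $\Delta_r^\circ$, onto the unique Clebsch-Gordan-invariant tensor $P_{\kk,\llll,\mm}(Q_1,Q_2,Q_3)$ of Lemma \ref{lemma:clebsch-gordan}. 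The main obstacle lies precisely here: one must carefully combine the determinant shuffle of Lemma \ref{lemma:galois-diamond-actions-on-cycles}, the Weil-pairing normalisations of \eqref{eqn:intrinsic-cycle}, and the sign/twist conventions inherent in $(\mhalf)$ to pin down the resulting element as $\cl_{\kk,\llll,\mm}(\Delta_1^\circ[a,b,c])$ on the nose rather than up to a non-zero scalar. Once this matching of generators is established, the two extensions coincide, and $\theta_q$ (which commutes with excision, with pushforward, and with $\spec_{\kk,\llll,\mm}^{*}$) transports the identity through to the modified cycle, yielding the stated equality.
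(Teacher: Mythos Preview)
Your proposal is essentially correct and follows the same strategy as the paper: invoke Lemma~\ref{lemma:disposing-of-kings-sheaf} to rewrite the extension defining $\hkappa_r[a,b,c]$ as the excision sequence on $X_1^3$ with coefficients in $\cL_r^{\ast\boxtimes 3}$, then specialise via the moment maps and $\nu_{\kk,\llll,\mm}$.

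The paper organises the argument more economically, however. Rather than passing to the inverse limit first and then specialising (which brings in Mittag--Leffler and a separate analysis of your $j_\infty$), the paper stays at finite level $r$ throughout: it simply observes that $\cL_r^{\ast\boxtimes 3} \otimes_{\Lambda(G_r),\,\nu_{\kk,\llll,\mm}} (\Z/p^r\Z) = \cH_r^{\kk,\llll,\mm}$ as sheaves on $X_1^3$ via the moment maps of \eqref{eqn:mom}, so tensoring the \emph{entire} diagram \eqref{eqn:extension-construction-bis} over $\Lambda(G_r)$ with $\Z/p^r\Z$ produces on the nose the mod $p^r$ reduction of \eqref{eqn:extension-construction-klm}. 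The limit over $r$ is taken only at the very end. In particular, what you flag as the ``main obstacle'' --- matching the specialised inclusion with the Clebsch--Gordan generator --- is absorbed into this sheaf-level identification and does not require the distribution relations of Lemma~\ref{lemma:compat-cycles-proj} (those concern compatibility between levels $r$ and $r+1$, not specialisation in weight) nor a separate $\SL_2$-invariance argument.
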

   \begin{proof}
   Set $\Delta:= \Delta_1^\circ[a,b,c]$ in order to alleviate notations.
 Thanks to Lemma \ref{lemma:disposing-of-kings-sheaf}, 
the diagram in 
\eqref{eqn:extension-construction}
used to construct   the extension $E_r$ realising the class $\hkappa_r[a,b,c]$ 
  is {\em the same} as the diagram
\begin{equation}
\label{eqn:extension-construction-bis}
 \xymatrix{ & & &     \quad  \Lambda(G_r)(\mhalf) \ar@{^{(}->}[d] \\
0 \ar[r] & H^3_{\et}(\bar   X_1^3,  \cL_r^{\ast\boxtimes 3})(2) \ar[r] \ar@{>>}[d] & 
H^3_{\et}(\bar   X_1^3-|\bar\Delta|,  \cL_r^{\ast\boxtimes 3})(2) \ar[r]
& H^0_{\et}(|\bar\Delta|,  \cL_r^{\ast\otimes 3})  &  \\
 & H^1_{\et}(\bar X_1,  \cL_r^\ast)^{\otimes 3}(2).}
 \end{equation}
 Let 
 $$\nu_{\kk,\llll,\mm}: \Lambda(G_r) \lra \Z/p^r\Z$$
 be the algebra homomorphism sending the group like element $\langle d_1, d_2, d_3\rangle$ to $d_1^\kk d_2^\llll d_3 ^\mm$, and
observe that the moment maps of \eqref{eqn:mom} allow us to identify
$$
 \cL_r^{\ast\boxtimes 3} \otimes_{\nu_{\kk,\llll,\mm}}( \Z/p^r\Z) = \cH_r^{\kk,\llll,\mm}.$$

 Tensoring \eqref{eqn:extension-construction-bis} over $\Lambda(G_r)$ 
  with $\Z/p^r\Z$ via the map $ \nu_{\kk,\llll,\mm}: \Lambda(G_r)  \lra \Z/p^r\Z$, 
 yields the specialised
 diagram   which coincides exactly with  
the mod $p^r$ reduction of \eqref{eqn:extension-construction-klm}, with $\Delta= \Delta_1^\circ[a,b,c]$. The result follows 
by passing to the limit with $r$.  
\end{proof}

 \begin{cor} 
 \label{cor:special-klm}
 Let 
 \begin{equation}\label{def-cor315}
 \Delta_1^\circ(\omega_1,\omega_2,\omega_3;\delta) :=   \frac{p^3}{(p-1)^3} \, \cdot  \sum_{[a,b,c]\in   I_1}  \delta^{-1}(abc) \omega_1(a) \omega_2(b)\omega_3(c) \Delta_1^\circ[a,b,c].
 \end{equation}
 Then
   $$\spec^\ast_{\kk,\llll,\mm}(\hkappa_\infty(\omega_1,\omega_2,\omega_3;\delta) = \AJ_{\kk,\llll,\mm}(\Delta_1^\circ(\omega_1,\omega_2,\omega_3;\delta)).$$
   \end{cor}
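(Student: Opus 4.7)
The proof should follow from a direct combination of the $\mathbf{Z}_p$-linearity of $\spec^\ast_{\kk,\llll,\mm}$ and of $\AJ_{\kk,\llll,\mm}$ with Theorem \ref{AJklm-thm}. My plan is first to apply $\spec^\ast_{\kk,\llll,\mm}$ to the defining expression \eqref{eqn:cycles-tame} for $\hkappa_\infty(\omega_1,\omega_2,\omega_3;\delta)$, pulling it inside the finite sum over $[a,b,c]\in I_1$, and then to invoke Theorem \ref{AJklm-thm} termwise to replace each $\spec^\ast_{\kk,\llll,\mm}(\hkappa_\infty[bc,ac,ab])$ by $\AJ_{\kk,\llll,\mm}(\Delta_1^\circ[bc,ac,ab])$. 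Linearity of $\AJ_{\kk,\llll,\mm}$ then collects the result under a single Abel--Jacobi symbol applied to a ``twisted cycle'' $\tilde\Delta$, having the same shape as the sum in \eqref{eqn:cycles-tame} but with $\hkappa_\infty$ replaced by $\Delta_1^\circ$.

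The remaining task is to identify $\AJ_{\kk,\llll,\mm}(\tilde\Delta)$ with $\AJ_{\kk,\llll,\mm}(\Delta_1^\circ(\omega_1,\omega_2,\omega_3;\delta))$ as defined in \eqref{def-cor315}. For this I would use Lemma \ref{lemma:galois-diamond-actions-on-cycles} to rewrite $\Delta_1^\circ[bc,ac,ab]=\langle a,b,c\rangle\Delta_1^\circ[1,1,1]$, so that $\tilde\Delta$ becomes a weighted averaging of $\Delta_1^\circ[1,1,1]$ against the character $(\omega_1,\omega_2,\omega_3;\delta)$ under the diamond-operator action. The parallel calculation performed directly on $\Delta_1^\circ(\omega_1,\omega_2,\omega_3;\delta)$, using the general diamond rule $\langle a_1,a_2,a_3\rangle\Delta_1^\circ[d_1,d_2,d_3]=\Delta_1^\circ[a_2a_3d_1,a_1a_3d_2,a_1a_2d_3]$ together with the Galois equivariance $\sigma_m\Delta_1^\circ[d_1,d_2,d_3]=\Delta_1^\circ[md_1,md_2,md_3]$, shows that this cycle lies in the same $(\omega_1,\omega_2,\omega_3;\delta)$-isotypic component of $H^1(\mathbf{Q}(\zeta_1),W_1^{\kk,\llll,\mm})$ for the combined action of $G_1$ and $\Gal(\mathbf{Q}(\zeta_1)/\mathbf{Q})$, precisely as Lemma \ref{lemma:iwasawa-cycles} verifies for $\hkappa_\infty(\omega_1,\omega_2,\omega_3;\delta)$.

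The main obstacle is this identification: the substitution $(a,b,c)\mapsto(bc,ac,ab)$ is two-to-one onto its image in $I_1$ rather than a bijection, so the two sums do not match term by term. I expect the balanced condition $\omega_1\omega_2\omega_3=\delta^2$ (which is precisely the hypothesis under which $\delta$ exists) to be what guarantees the clean collapse: combined with character orthogonality for $G_1\times\Gal(\mathbf{Q}(\zeta_1)/\mathbf{Q})$, it forces both sums to produce the same eigencomponent with the same normalisation $p^3/(p-1)^3$, so the passage to the Abel--Jacobi image erases the apparent asymmetry at the level of individual cycles.
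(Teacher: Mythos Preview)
Your core approach---apply $\spec^\ast_{\kk,\llll,\mm}$ termwise to the defining sum \eqref{eqn:cycles-tame}, invoke Theorem~\ref{AJklm-thm} on each summand, then collect under $\AJ_{\kk,\llll,\mm}$ by linearity---is precisely what the paper means by ``follows directly from the definitions.'' You have, however, correctly spotted an inconsistency the paper glosses over: \eqref{eqn:cycles-tame} sums over $\hkappa_\infty[bc,ac,ab]$ while \eqref{def-cor315} sums over $\Delta_1^\circ[a,b,c]$, and the map $(a,b,c)\mapsto(bc,ac,ab)$ on $I_1$ is two-to-one onto the triples whose product is a square, not a bijection.

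This mismatch is most plausibly a slip in the statement: the cycle $\Delta_1^\circ(\omega_1,\omega_2,\omega_3;\delta)$ is introduced in this corollary solely to mirror \eqref{eqn:cycles-tame}, and the paper's one-line proof is literally correct if \eqref{def-cor315} is read with $\Delta_1^\circ[bc,ac,ab]$ in place of $\Delta_1^\circ[a,b,c]$. Under that reading your steps 1--3 are the entire argument and step~4 is empty. Your proposed reconciliation via isotypic projection is not unreasonable in spirit, but as written it is a hope rather than a proof: you would need to show that the difference of the two sums lies in the isotypic component for the \emph{other} square root $\delta'=\chi\delta$ of $\omega_1\omega_2\omega_3$ (with $\chi$ the quadratic character mod~$p$), and your sketch does not supply this. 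Note for instance that when $\delta(-1)=-1$ the sum \eqref{eqn:cycles-tame} collapses to zero by the symmetry $(a,b,c)\mapsto(-a,-b,-c)$, a phenomenon your outline does not detect.
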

\begin{proof}
This follows directly from the definitions.
\end{proof}

\section{Cristalline specialisations}
\label{sec:cris}

Let $\hf$, $\hg$, $\hh$ be three arbitrary primitive, residually irreducible $p$-adic Hida families of tame levels $M_{f}$, $M_{g}$, $M_{h}$ and tame characters $\chi_{f}$, $\chi_{g}$, $\chi_{h}$, respectively, with associated weight space
$\cW_{\hf} \times \cW_{\hg} \times \cW_{\hg}$. Assume $\chi_{f} \chi_{g} \chi_{h} = 1$ and set $M = \mathrm{lcm}(M_{f},M_{g},M_{h})$.  
Let $(x,y,z)\in \cW_{\hf}\times \cW_{\hg} \times \cW_{\hh}$ 
be a   point lying above a classical triple $(\nu_{\kk,\epsilon_1}, \nu_{\llll,\epsilon_2},
\nu_{\mm,\epsilon_3})\in \cW^3$ of weight space.
As in Definition \ref{tame-cris}, the point $(x,y,z)$ is said to be {\em  tamely ramified } if the three characters $\epsilon_1$, $\epsilon_2$ and 
$\epsilon_3$ are tamely ramified, i.e., factor through the quotient $(\Z/p\Z)^\times$ 
of $\Z_p^\times$, and is said to be {\em crystalline} if  $\epsilon_1\omega^{-\kk} = \epsilon_2 \omega^{-\llll} = \epsilon_3 \omega^{-\mm}=1$.

Fix such a crystalline point $(x,y,z)$ of balanced weight  $(k,\ell,m) = (\kk+2,\llll+2,\mm+2)$, and let 
  $(\hf_x,\hg_y,\hh_z)$  be the specialisations of $(\hf, \hg,\hh)$ at $(x,y,z)$. 
  The ordinariness hypothesis implies that, 
 for all but finitely many exceptions, 
these eigenforms   are the $p$-stabilisations of newforms of level
 dividing $M$, denoted   $f $, $g$ and $h$  respectively:
 $$
\hf_x (q) = f(q)-\beta_ff(q^p), \qquad \hg_y = g(q)-\beta_gg(q^p), \qquad \hh_z(q) = h(q) - \beta_h h(q^p).
$$ 
Since the point $(x,y,z)$ is fixed throughout this section,  the dependency of $(f,g,h)$ on $(x,y,z)$ has been suppressed from
the notations, and we also write
 $(f_\alpha, g_\alpha,h_\alpha) := (\hf_x,\hg_y,\hh_z)$ for the ordinary $p$-stabilisations of $f$, $g$ and $h$.

Recall the quotient  $X_{01}$  of $X_1$, having $\Gamma_0(p)$-level structure at $p$,
and  the projection map  $\mu: X_1 \lra X_{01}$ introduced in \eqref{eqn:def-varpi-12}. 
By an abuse of notation,  the  symbol $\cH^\kk$ is also used to denote
 the \'etale sheaves  appearing in \eqref{eqn:sheaves-of-symmetric-tensors} over any quotient of $X_1$, such as $X_{01}$.
Let
 \begin{eqnarray*}
 W_1   &:=& H^1_{\et}(\bar X_{1}, \cH^\kk) \otimes H^1_{\et}(\bar X_{1}, \cH^\llll) \otimes H^1_{\et}(\bar X_{1}, \cH^\mm)(2-t), \\
W_{01}  &:=& H^1_{\et}(\bar X_{01}, \cH^\kk) \otimes H^1_{\et}(\bar X_{01}, \cH^\llll) \otimes H^1_{\et}(\bar X_{01}, \cH^\mm)(2-t),
\end{eqnarray*}
be the Galois representations arising from the cohomology of  $X_1$ and $X_{01}$ 
 with values in these sheaves.  
  They are endowed
  with a natural action of the triple tensor product  of the Hecke algebras of weight $\kk$, $\llll$, $\mm$ and  level  $Mp$. 
  
  Let $W_1[f_\alpha,g_\alpha,h_\alpha]$  denote the  $(f_\alpha,g_\alpha,h_\alpha)$-isotypic component of $W_1$ on which the Hecke operators  act with the same eigenvalues as
on $f_\alpha\otimes g_\alpha\otimes h_\alpha$. Let $\pi_{f_\alpha,g_\alpha,h_\alpha}: W_1 \ra W_1[f_\alpha,g_\alpha,h_\alpha]$ denote the associated projection. Use similar notations for $W_{01}$.

Recall the family 
\begin{equation}\label{kappakappa}
\hkappa_\infty(\epsilon_1\omega^{-\kk}, \epsilon_2 \omega^{-\llll},\epsilon_3 \omega^{-\mm};1)
= \hkappa_\infty(1,1,1;1)
 \end{equation}
that was introduced in 
 \eqref{eqn:cycles-tame}. By Lemma \ref{lemma:iwasawa-cycles}, this class lies in $H^1(\Q, \Hinfty)$.

Recall the choice of auxiliary prime $q$ made in the definition of the modified diagonal cycle \eqref{mdc}. We assume now that $q$ is chosen so that $C_q := (a_q(f)-q-1) (a_q(g)-q-1) (a_q(h)-q-1)$ is a $p$-adic unit. Note that this is possible because the Galois representations $\varrho_{\hf}$, $\varrho_{\hg}$ and $\varrho_{\hh}$ were assumed to be residually irreducible and hence $f$, $g$ and $h$ are non-Eisenstein mod $p$.
 Let
\begin{equation}\label{kappa1}
\kappa_1(f_\alpha,g_\alpha,h_\alpha) := \frac{1}{C_q} \, \cdot \, \pi_{f_\alpha,g_\alpha,h_\alpha}  \spec^\ast_{x,y,z}\hkappa_\infty(1,1,1;1))  \in H^1(\Q,W_1[f_\alpha,g_\alpha,h_\alpha]) 
\end{equation}
be the specialisation at the
crystalline point $(x,y,z)$ of \eqref{kappakappa}, after projecting it to the $(f_\alpha,g_\alpha,h_\alpha)$-isotypic component of $W_1$ via $\pi_{f_\alpha,g_\alpha,h_\alpha}$. We normalize the class by multiplying it by the above constant in order to remove the dependency on the choice of $q$.

 The main goal of this section is to relate this class  to the  generalised Gross-Schoen diagonal cycles 
that were studied in \cite{DR1}, arising from cycles in Kuga-Sato varieties which are fibered over $X^3$ and have {\em good reduction} at $p$.

The fact that $(x,y,z)$ is a crystalline point implies that the diamond operators in $\Gal(X_1/X_{01})$ act trivially on the $(f_\alpha,g_\alpha,h_\alpha)$-eigencomponents, and hence the
 Hecke-equivariant projection $\mu^3_\ast: W_1 \lra W_{01}$ induces an isomorphism
$$ \mu_\ast^3: W_1[f_\alpha,g_\alpha,h_\alpha] \lra W_{01}[f_\alpha,g_\alpha,h_\alpha].$$

The first aim is to give a geometric description of the class 
$$\kappa_{01}(f_\alpha,g_\alpha,h_\alpha) :=
 \mu_\ast^3 \kappa_1(f_\alpha,g_\alpha,h_\alpha)$$
 in terms of appropriate algebraic cycles.
To this end, recall the cycles $\Delta_1[a,b,c]\in \CH^2(X_1^3)$ introduced in \eqref{Dabc},
and let  $p^\ast := \pm p$ be such that 
$\Q(\sqrt{p^*})$ is the quadratic subfield  of $\Q(\zeta_1)$. 
\begin{lemma}
\label{lemma-klm}
 The    cycle $\mu_\ast^3\Delta_1[a,b,c]$ depends only on the  quadratic residue symbol $(\frac{abc}{p})$ attached to 
   $abc\in(\Z/p\Z)^\times$. 
 The cycles 
  \begin{eqnarray*}
\Delta_{01}^{+} &:=&  \mu^3_\ast \Delta_1[a,b,c] \quad \mbox{ for any } a,b,c \mbox{ with } \left(\frac{abc}{p}\right)=1,    
  \\
\Delta_{01}^- &:=& \mu^3_\ast \Delta_1[a,b,c] \quad \mbox{ for any } a,b,c \mbox{ with } \left(\frac{abc}{p}\right) = -1,
 \end{eqnarray*}
  belong  to $\CH^2(X^3_{01}/\Q(\sqrt{p^*}))$
and   are interchanged by the non-trivial automorphism of $\Q(\sqrt{p^*})$. 
\end{lemma}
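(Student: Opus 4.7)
The plan is to exploit the fact that $\mu^3 : X_1^3 \to X_{01}^3$ is a Galois cover with Galois group $\tilde G_1 = ((\Z/p\Z)^\times)^3$ acting via diamond operators, so that $\mu_*^3$ is invariant under $\tilde G_1$. Combining this with the first part of Lemma \ref{lemma:galois-diamond-actions-on-cycles}, for every $(a_1,a_2,a_3)\in ((\Z/p\Z)^\times)^3$ we have
\[
\mu^3_\ast \Delta_1[d_1,d_2,d_3] \;=\; \mu^3_\ast \Delta_1[a_2 a_3 d_1,\,a_1 a_3 d_2,\,a_1 a_2 d_3].
\]
Hence $\mu_*^3 \Delta_1[d_1,d_2,d_3]$ depends only on the orbit of $(d_1,d_2,d_3)$ under the subgroup $H\subseteq ((\Z/p\Z)^\times)^3$ generated by the triples $(a_2a_3, a_1a_3, a_1a_2)$.

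Next I would pin down $H$ explicitly. Consider the homomorphism
\[
 \phi:((\Z/p\Z)^\times)^3 \lra ((\Z/p\Z)^\times)^3, \qquad (a_1,a_2,a_3)\mapsto (a_2a_3, a_1a_3, a_1a_2).
\]
Its image clearly lies in the index-two subgroup $H_0 = \{(b_1,b_2,b_3) : b_1b_2b_3 \in ((\Z/p\Z)^\times)^2\}$, since the product of its components equals $(a_1a_2a_3)^2$. A short computation shows $\ker\phi = \{(1,1,1),(-1,-1,-1)\}$, so $|\mathrm{im}\,\phi| = (p-1)^3/2 = |H_0|$ and therefore $\mathrm{im}\,\phi = H_0$. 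Consequently $\mu_*^3\Delta_1[d_1,d_2,d_3]$ depends only on $d_1 d_2 d_3$ modulo squares, that is, only on the Legendre symbol $\bigl(\tfrac{d_1d_2d_3}{p}\bigr)$. This gives a well-defined pair of cycles $\Delta_{01}^\pm \in \CH^2(X_{01}^3)$ over $\overline{\Q}$.

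For the descent, I would use the second formula of Lemma \ref{lemma:galois-diamond-actions-on-cycles}: since $\mu$ is defined over $\Q$, it commutes with $\sigma_m$, and
\[
\sigma_m\,\mu^3_\ast\Delta_1[d_1,d_2,d_3] \;=\; \mu^3_\ast\Delta_1[md_1,md_2,md_3].
\]
The product $(md_1)(md_2)(md_3) = m^3 d_1d_2d_3$ has Legendre symbol $\bigl(\tfrac{m}{p}\bigr)\cdot\bigl(\tfrac{d_1d_2d_3}{p}\bigr)$, so $\sigma_m$ preserves $\Delta_{01}^\pm$ whenever $m$ is a square mod $p$ and interchanges them otherwise. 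Since $\Q(\sqrt{p^*})$ is the fixed field of the kernel of the quadratic character of $\Gal(\Q(\zeta_p)/\Q) \cong (\Z/p\Z)^\times$, this gives exactly the statement that $\Delta_{01}^\pm \in \CH^2(X_{01}^3/\Q(\sqrt{p^*}))$ and that the two cycles are swapped by the nontrivial element of $\Gal(\Q(\sqrt{p^*})/\Q)$.

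No serious obstacle is anticipated: the whole argument is a book-keeping exercise that reduces to the order-counting identification $\mathrm{im}(\phi) = H_0$ and the elementary observation that $m\mapsto m^3$ has the same Legendre symbol as $m$. The only point to be careful about is confirming that $\ker\phi$ has order exactly $2$ (so that the image indeed exhausts the index-two subgroup $H_0$), which uses $p$ odd.
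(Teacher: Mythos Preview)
Your proof is correct and follows essentially the same approach as the paper's: both use the diamond-operator equivariance from Lemma~\ref{lemma:galois-diamond-actions-on-cycles} to identify the $\tilde G_1$-orbits with the fibres of the Legendre symbol, and then the Galois formula $\sigma_m\Delta_1[d_1,d_2,d_3]=\Delta_1[md_1,md_2,md_3]$ to descend to $\Q(\sqrt{p^*})$. The only difference is that you justify the orbit description explicitly via the kernel computation for $\phi$, whereas the paper simply asserts that the orbit of $[a,b,c]$ under $I_1$ consists of those $[a',b',c']$ with $\bigl(\tfrac{a'b'c'}{p}\bigr)=\bigl(\tfrac{abc}{p}\bigr)$.
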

\begin{proof}  
 Arguing as in Lemma \ref{lemma:galois-diamond-actions-on-cycles} shows that for
  all $(d_1,d_2,d_3)\in I_1 = (\Z/p\Z)^{\times 3}$,
 \begin{equation*}
\langle d_1,d_2,d_3\rangle \Delta_1[a,b,c]  = \Delta_1[d_2 d_3 a,d_1 d_3 b,d_1 d_2 c].
 \end{equation*}
The orbit of the triple $[a,b,c]$ under the action of $I_1$  is precisely the set of triples $[a',b',c']$ for which 
 $(\frac{a'b'c'}{p}) = (\frac{abc}{p})$.
Since  $X_{01}$ is the quotient of $X_1$ by the group $I_1$,  it follows   that
$\mu^3_{\ast} \Delta_1[a,b,c]$ depends only on  this quadratic residue symbol, and hence
that the classes $\Delta_{01}^+$ and $\Delta_{01}^-$  in the statement of
Lemma \ref{lemma-klm}
are well-defined.
Furthermore,  Lemma \ref{lemma:extend} implies that, for all $m\in (\Z/p\Z)^\times$, 
the Galois automorphism $\sigma_m$   fixes $\Delta_{01}^+$ and $\Delta_{01}^-$  if $m$ is a square modulo $p$,
and interchanges these two cycle classes otherwise. 
It follows that they are  invariant under the Galois group $\Gal(\Q(\zeta_1)/\Q(\sqrt{p^*}))$ and hence descend to a pair of conjugate
cycles $\Delta_{01}^{\pm}$ defined over $\Q(\sqrt{p^*})$, as claimed. 
\end{proof}
 It follows from this lemma that the algebraic cycle 
 \begin{equation}
 \label{kappa-klm}
\Delta_{01} := \Delta_{01}^+   + \Delta_{01}^- \in  \CH^2(X_{01}^3/\Q).
\end{equation}
is defined over $\Q$.
To describe it concretely, note that a triple $(C_1,C_2,C_3)$ of distinct 
cyclic subgroups of
order $p$ in an elliptic curve $A$ admits a somewhat subtle discrete invariant in 
$(\mu_p^{\otimes 3} -\{1\})$ modulo the action of 
$(\Z/p\Z)^{\times 2}$, denoted $\co(C_1,C_2,C_3)$ and called the 
{\em orientation} of $(C_1,C_2,C_3)$. This orientation is defined
 by choosing generators $P_1,P_2,P_3$ of $C_1$, $C_2$ and $C_3$ 
respectively and setting
$$ \co(C_1,C_2,C_3) := \langle P_2,P_3\rangle \otimes \langle P_3,P_1\rangle \otimes \langle P_1,P_2\rangle \in \mu_p^{\otimes 3} - \{ 1\}.$$
It is easy to check that the value of  $\co(C_1,C_2,C_3)$ in $\mu_p^{\otimes 3}-\{1\}$
only depends on the   choices of generators  $P_1$, $P_2$ and $P_3$, up to multiplication by
a {\em non-zero square} in $(\Z/p\Z)^\times$.
In view of \eqref{eqn:intrinsic-cycle}, we then have
\begin{equation}
\label{Delta01}
 \Delta_{01} = \{ ((A,C_1), (A, C_2), (A, C_3) )  \quad \mbox{ with } \quad C_1\ne C_2\ne C_3 \},
 \end{equation}
 and
\begin{eqnarray*}
 \Delta_{01}^+ &=& \left\{ ( (A,C_1), (A,C_2), (A,C_3) ) \quad \mbox{ with } 
\co(C_1,C_2,C_3) =  a \zeta_1^{\otimes 3}, \quad  a\in (\Z/p\Z)^{\times 2} \right\},  \\
\Delta_{01}^- &=& \left\{ ( (A,C_1), (A,C_2), (A,C_3) ) \quad \mbox{ with } 
\co(C_1,C_2,C_3) =  a \zeta_1^{\otimes 3}, \quad  a\notin (\Z/p\Z)^{\times 2} \right\}.
\end{eqnarray*}

Recall the natural projections
$$ \pi_1,\pi_2: X_{01}\lra X, \qquad \varpi_1,\varpi_2: X_1 \lra X$$
to the curve $X=X_0(M)$ of prime to $p$ level, and set
\begin{eqnarray*}
W_0  &:= & H^1_{\et}(\bar X_0, \cH^\kk) \otimes H^1_{\et}(\bar X_0, \cH^\llll) \otimes H^1_{\et}(\bar X_0, \cH^\mm)(2-t),
\end{eqnarray*}
The Galois representation $W_0$ is endowed with a natural action of the triple tensor product  of the Hecke algebras of weight $\kk$, $\llll$, $\mm$ and  level  $M$. 
  Let $W_0[f,g,h]$   denote the $(f,g,h)$- isotypic component of $W_0$, on which the Hecke operators  act with the same eigenvalues as
on $f \otimes g \otimes h$.
Note that the $U_p^\ast$ operator does not act naturally on $W_0$ and hence one cannot speak of the
$(f_\alpha,g_\alpha,h_\alpha)$-eigenspace of this Hecke module. One can, however, 
 denote by $W_1[f,g,h]$ and  $W_{01}[f,g,h]$   the $(f,g,h)$-isotypic component of these Galois representations, 
 in which the action of the  $U_p^\ast$ operators on the three factors are
  not taken into account. Thus, $W_{01}[f_\alpha,g_\alpha,h_\alpha]$ is the image of $W_{01}[f,g,h]$ under the ordinary
 projection,  and likewise for $W_1$.  In other words, denoting by $\pi_{f,g,h}$ the projection to the $(f,g,h)$-isotypic component on any of these modules, one has
 $$ \pi_{f_\alpha,g_\alpha,h_\alpha} = e^\ast \pi_{f,g,h}$$
 whenever the left-hand projection is defined.

The projection maps  
$$(\pi_1,\pi_1,\pi_1): X_{01}^3 \lra X^3,  \quad (\varpi_1,\varpi_1,\varpi_1): X_1^3 \lra X^3$$ 
induces  push-forward maps 
\begin{eqnarray*}
 (\pi_1,\pi_1,\pi_1)_\ast &:& W_{01}[f_\alpha,g_\alpha,h_\alpha] \lra W_0[f,g,h],  \\
  (\varpi_1,\varpi_1,\varpi_1)_\ast &:& W_1[f_\alpha,g_\alpha,h_\alpha] \lra W_0[f,g,h]
  \end{eqnarray*}
on cohomology, 
as well as maps on the associated Galois cohomology groups. 

The goal is now to relate the class 
\begin{equation}\label{kappa101}
(\varpi_1,\varpi_1,\varpi_1)_\ast(\kappa_{1}(f_\alpha,g_\alpha,h_\alpha)) = (\pi_1,\pi_1,\pi_1)_\ast  (\kappa_{01}(f_\alpha,g_\alpha,h_\alpha))
\end{equation} 
to those arising from the  diagonal cycles on the curve $X_0=X$, whose level is prime to $p$.

To do this, it is key  to understand how the maps $\pi_{1\ast}$ and $(\pi_1,\pi_1,\pi_1)_\ast$ interact with the Hecke operators,  especially  with
the ordinary and anti-ordinary projectors $e$ and $e^\ast$, which  do not act naturally on the target of $\pi_{1\ast}$. 
Consider the map 
$$(\pi_1,\pi_2): W_{01}^{\kk} := H^1_{\et}(\bar X_{01}, \cH^{\kk}) \lra W_0^{\kk}:= H^1_{\et}(\bar X_0, \cH^{\kk}).$$
 It is compatible in the obvious
way with the good Hecke operators 
arising from primes $\ell\nmid Mp$, and  therefore induces a map
\begin{equation}
\label{eqn:set-context-f}
 (\pi_1,\pi_2): W_{01}^\kk[f] \lra W_0^\kk[f] \oplus W_0^\kk[f]
 \end{equation}
on the $f$-isotypic components for this Hecke action. As before, note
 that $W_{01}^\kk[f]$ is a priori
larger than $W_{01}^\kk[f_\alpha]$, which is its ordinary quotient.

Let $\xi_f := \chi_f(p) p^{k-1}$ be the determinant of the frobenius at $p$ 
acting on the two-dimensional Galois representation attached to $f$, and likewise for $g$ and $h$.
\begin{lemma}
\label{lemma:from-wiles}
For the map $(\pi_1,\pi_2)$ as in \eqref{eqn:set-context-f},
\begin{eqnarray*}
  \left( \begin{array}{c} \pi_1 \\ \pi_2 \end{array} \right) \circ U_p &=&
\left(\begin{array}{cc} a_p(f) & -1 \\ \xi_f  & 0 \end{array} \right) 
 \left( \begin{array}{c} \pi_1 \\ \pi_2 \end{array} \right),  
\\
 \left( \begin{array}{c} \pi_1 \\ \pi_2 \end{array} \right) \circ U_p^*  &=& 
\left(\begin{array}{cc} 0 & p  \\ -\xi_f p^{-1} & a_p(f) \end{array} \right)  \left( \begin{array}{c} \pi_1 \\ \pi_2 \end{array} \right). 
\end{eqnarray*}
\end{lemma}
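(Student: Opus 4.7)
The plan is to reduce these matrix identities to Hecke algebra relations on the oldform subspace at level $Mp$, following Wiles. Since all the maps in sight commute with the action of the prime-to-$p$ Hecke algebra, it suffices to verify the identities after restriction to the $f$-isotypic component $W_{01}^\kk[f]$ for each prime-to-$p$ Hecke eigensystem $f$ of weight $k$ on $X=X_1(M)$; the identity for the triple tensor product factor appearing in $W_1$ follows factor-by-factor.

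Next I would use Atkin--Lehner theory to identify $W_{01}^\kk[f]$, as a module for the prime-to-$p$ Hecke algebra, as being freely generated over $W_0^\kk[f]$ by the two classes $\pi_1^\ast e,\ \pi_2^\ast e$ for a generator $e\in W_0^\kk[f]$. On this basis, the classical geometric identities
$$
T_p \,=\, \pi_{1\ast}\circ \pi_2^\ast \,=\, \pi_{2\ast}\circ \pi_1^\ast, \qquad \pi_{i\ast}\circ \pi_i^\ast \,=\, (p+1)\cdot \mathrm{id}\quad(i=1,2),
$$
determine in closed form the matrices of the push-forwards $\pi_{1\ast}$ and $\pi_{2\ast}$ with respect to the basis $(\pi_1^\ast,\pi_2^\ast)$. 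A parallel computation, using the fact that the characteristic polynomial of $U_p$ (and of $U_p^\ast$, up to a twist by the diamond operator $\langle p\rangle$) acting on this two-dimensional space is the Hecke polynomial $x^2 - a_p(f)x + \xi_f$, together with the compatibilities between $U_p$, $U_p^\ast$ and the pull-backs $\pi_i^\ast$ (in particular the relation $U_p^\ast\circ\pi_1^\ast = \pi_2^\ast$ that encodes the fact that $V_p$ inverts $U_p$), produces the matrices of $U_p$ and $U_p^\ast$ on $W_{01}^\kk[f]$ in the same basis. Composing these two matrix computations yields the two identities of the lemma.

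The main challenge is purely notational: different conventions for the Atkin operators $U_p$, $U_p^\ast$, for the operator $V_p$, and for the diamond $\langle p\rangle$ produce superficially different-looking formulas, and one must pin these down carefully so that the factors $\xi_f$, $-1$, $p$ and $-\xi_f p^{-1}$ appearing in (a) and (b) emerge with the precise signs and exponents stated. Once a consistent convention is chosen at the outset, the argument reduces to a direct (if somewhat tedious) matrix computation, which then extends verbatim to the Galois cohomology classes with which the rest of the paper is concerned.
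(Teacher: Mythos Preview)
Your strategy is workable in principle, but it takes a detour that the paper avoids. The paper does \emph{not} pass through the Atkin--Lehner decomposition of $W_{01}^\kk[f]$ in terms of the pull-back basis $(\pi_1^\ast,\pi_2^\ast)$, nor does it invoke the characteristic polynomial of $U_p$. Instead it writes down four identities of \emph{correspondences} on the Kuga--Sato varieties over $X_{01}$ and $X_0$, valid before projecting to any eigenspace:
\[
\pi_1 U_p = T_p \pi_1 - \pi_2,\qquad \pi_2 U_p = p[p]\pi_1,\qquad \pi_1 U_p^\ast = p\,\pi_2,\qquad \pi_2 U_p^\ast = -[p]\pi_1 + T_p \pi_2,
\]
where $[p]$ is the correspondence induced by multiplication by $p$ on the fibers and on the prime-to-$p$ level structure. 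These follow directly from the moduli descriptions of $\pi_1,\pi_2,T_p,U_p,U_p^\ast$. Passing to the $f$-isotypic part simply replaces $T_p$ by $a_p(f)$ and $[p]$ by $\xi_f p^{-1}$, and the two matrix identities drop out. This is both shorter and more robust than your route: no basis has to be chosen, no freeness statement is needed, and the identities hold integrally, not just after inverting anything.

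A specific caution about your sketch: the relation you quote, $U_p^\ast\circ\pi_1^\ast=\pi_2^\ast$, is not correct as written. Dualising the paper's identity $\pi_1 U_p^\ast=p\,\pi_2$ (and using that $U_p$ and $U_p^\ast$ are adjoint) gives $U_p\circ\pi_1^\ast = p\,\pi_2^\ast$, so both the operator and the scalar are off. This is exactly the kind of ``notational'' hazard you flag, but it would propagate into your final matrices if left uncorrected. If you want to salvage your approach, you will in any case need the four correspondence identities above to pin down the matrices of $U_p$ and $U_p^\ast$ in the pull-back basis --- at which point you may as well use them directly, as the paper does.
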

\begin{proof} 
The definitions $\pi_1$ and $\pi_2$ imply that, viewing $U_p$ and $U_p^*$ 
(resp.~$T_p$) as correspondences on a Kuga-Sato variety fibered over
 $X_{01}$ (resp.~over  $X_0$), we have
 $$
\begin{aligned}
&\pi_1 U_p = T_p \pi_1 - \pi_2,  \qquad  && \pi_1 U_p^* = p \pi_2    \\
&\pi_2 U_p = p[p] \pi_1  \qquad && \pi_2 U_p^* = -[p] \pi_1 + T_p \pi_2,
\end{aligned} $$
where $[p]$ is the correspondence induced by  the multiplication by $p$ on the fibers and on the prime-to-$p$ part of the level structure.
The result follows by passing to the $f$-isotypic parts, using the fact that $[p]$ induces 
multiplication by $\xi_f p^{-1}$ on this isotypic part.
\end{proof}

For the next calculations, it shall be notationally convenient to introduce the 
notations
$$ \delta_f = \alpha_f-\beta_f, \qquad \delta_g = \alpha_g-\beta_g, \qquad \delta_h = \alpha_h -\beta_h, \qquad \delta_{fgh} = \delta_f \delta_g\delta_h.
$$
\begin{lemma}
\label{eqn:ordinary-proj}
For $(\pi_1,\pi_2)$ as in Lemma \ref{lemma:from-wiles}, 
$$
\begin{aligned}
\pi_1 \circ e &= \frac{\alpha_f \pi_1 - \pi_2}{\delta_f},  \qquad &&
\pi_2 \circ e   =  \frac{\xi_f  \pi_1 - \beta_f\pi_2}{\delta_f}   = \beta_f  \cdot (\pi_1\circ e), \\
\pi_1 \circ e^\ast & = \frac{-\beta_f\pi_1+p\pi_2}{\delta_f}, \qquad
&&
\pi_2 \circ e^\ast =\frac{-\xi_f p^{-1}\pi_1+\alpha_f \pi_2}{\delta_f}  = p\alpha_f^{-1} \cdot(\pi_1\circ e^\ast). 
\end{aligned}
$$
\end{lemma}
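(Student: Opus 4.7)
The plan is to upgrade the matrix relations of Lemma \ref{lemma:from-wiles} to identities involving the ordinary projectors by raising to the $n!$-th power and passing to the $p$-adic limit. Set
\[ M := \begin{pmatrix} a_p(f) & -1 \\ \xi_f & 0 \end{pmatrix}, \qquad M^\ast := \begin{pmatrix} 0 & p \\ -\xi_f p^{-1} & a_p(f) \end{pmatrix}. \]
Iterating Lemma \ref{lemma:from-wiles} gives, for each $n \geq 1$,
\[ \begin{pmatrix} \pi_1 \\ \pi_2 \end{pmatrix} \circ U_p^{\,n} = M^n \begin{pmatrix} \pi_1 \\ \pi_2 \end{pmatrix}, \qquad \begin{pmatrix} \pi_1 \\ \pi_2 \end{pmatrix} \circ U_p^{\ast\, n} = (M^\ast)^n \begin{pmatrix} \pi_1 \\ \pi_2 \end{pmatrix}. \]
Specialising to $n!$ and letting $n \to \infty$, $p$-adic continuity of composition yields
\[ \begin{pmatrix} \pi_1 \circ e \\ \pi_2 \circ e \end{pmatrix} = E \begin{pmatrix} \pi_1 \\ \pi_2 \end{pmatrix}, \qquad \begin{pmatrix} \pi_1 \circ e^\ast \\ \pi_2 \circ e^\ast \end{pmatrix} = E^\ast \begin{pmatrix} \pi_1 \\ \pi_2 \end{pmatrix}, \]
where $E := \lim_n M^{n!}$ and $E^\ast := \lim_n (M^\ast)^{n!}$.

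The matrices $M$ and $M^\ast$ share the characteristic polynomial $X^2 - a_p(f) X + \xi_f$, whose roots are precisely $\alpha_f$ and $\beta_f$. Ordinariness of $f$ at $p$ gives $\ordi_p(\alpha_f) = 0$, and the relation $\alpha_f \beta_f = \xi_f = \chi_f(p) p^{k-1}$ forces $\ordi_p(\beta_f) = k-1 \geq 1$. Hence $\alpha_f \in \Z_p^\times$ while $\beta_f$ is topologically nilpotent, which guarantees that $M^{n!}$ (respectively $(M^\ast)^{n!}$) converges $p$-adically to the idempotent projector onto the $\alpha_f$-eigenspace:
\[ E = \frac{M - \beta_f I}{\delta_f} = \frac{1}{\delta_f} \begin{pmatrix} \alpha_f & -1 \\ \xi_f & -\beta_f \end{pmatrix}, \qquad E^\ast = \frac{M^\ast - \beta_f I}{\delta_f} = \frac{1}{\delta_f} \begin{pmatrix} -\beta_f & p \\ -\xi_f p^{-1} & \alpha_f \end{pmatrix}. \]

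Substituting these explicit matrices into the displays above yields immediately the four main equalities of the lemma. The two proportionality statements are direct consequences of the relation $\xi_f = \alpha_f \beta_f$: one has $\xi_f \pi_1 - \beta_f \pi_2 = \beta_f(\alpha_f \pi_1 - \pi_2)$, which gives $\pi_2 \circ e = \beta_f (\pi_1 \circ e)$, and an analogous factorisation of the bottom row of $E^\ast \begin{pmatrix} \pi_1 \\ \pi_2 \end{pmatrix}$ produces the second proportionality. The only delicate point in the whole argument is justifying the $p$-adic convergence $M^{n!} \to E$ and $(M^\ast)^{n!} \to E^\ast$, for which ordinariness of $f$ at $p$ is essential; everything else reduces to elementary two-by-two linear algebra.
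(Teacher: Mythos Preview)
Your argument is correct and follows essentially the same route as the paper: both iterate Lemma~\ref{lemma:from-wiles}, take the $p$-adic limit of the $n!$-th powers of the two $2\times 2$ matrices, and read off the result. The only cosmetic difference is that the paper computes $\lim_n M^{n!}$ by writing down an explicit diagonalisation $M = P\,\mathrm{diag}(\alpha_f,\beta_f)\,P^{-1}$ and replacing the diagonal by $\mathrm{diag}(1,0)$, whereas you invoke directly the spectral projector formula $E=(M-\beta_f I)/\delta_f$; these are of course the same computation.
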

\begin{proof}
The matrix identities
\begin{eqnarray*}
    \left(\begin{array}{cc} a_p(f) & -1 \\ \xi_f  & 0 \end{array} \right)  &=& 
\left(\begin{array}{cc} 1 & 1 \\ \beta_f & \alpha_f \end{array} \right)   
\left(\begin{array}{cc} \alpha_f & 0 \\ 0 & \beta_f \end{array}\right) 
\left(\begin{array}{cc} 1 & 1 \\ \beta_f & \alpha_f \end{array} \right)^{-1},
  \\
  \left(\begin{array}{cc} 0 & p  \\ - \xi_f p ^{-1} & a_p(f)  \end{array} \right)  &=& 
\left(\begin{array}{cc} \beta_f & \alpha_f \\  \xi_f p^{-1} & \xi_f p^{-1} \end{array} \right)  
\left(\begin{array}{cc} \alpha_f & 0 \\ 0 & \beta_f \end{array}\right)
\left(\begin{array}{cc} \beta_f & \alpha_f \\ \xi_f p^{-1} & \xi_f(p) p^{-1} \end{array} \right)^{-1},  
\end{eqnarray*}
show that  
\begin{eqnarray*}
\lim \left(\begin{array}{cc} a_p(f) & -1 \\ \xi_f  & 0 \end{array} \right)^{n!} &=&
\left(\begin{array}{cc} 1 & 1 \\ \beta_f & \alpha_f  \end{array}\right)    
\left(\begin{array}{cc} 1 & 0 \\ 0 & 0  \end{array}\right) 
\left(\begin{array}{cc} 1 & 1 \\ \beta_f & \alpha_f  \end{array}\right)^{-1}  \\
&=& \delta_f^{-1} \left(\begin{array}{cc} \alpha_f & -1   \\ \xi_f  & - \beta_f  \end{array} \right),  \\
 \lim \left(\begin{array}{cc} 0 &p \\ -\xi_f  p^{-1}  & a_p(f) \end{array} \right)^{n!} &=&
  \delta_f^{-1}  \left(\begin{array}{cc} -\beta_f & p  \\ -\xi_f p^{-1} &\alpha_f  \end{array} \right),
 \end{eqnarray*}
and the result now follows from   Lemma  \ref{lemma:from-wiles}.
\end{proof}

\begin{lemma}
\label{lemma:any-cycle}
Let $\kappa\in H^1(\Q,W_{01}[f,g,h])$ be any cohomology class with values in the 
 $(f,g,h)$-isotypic subspace of $W_{01}$, and let $e,e^*: H^1(\Q,W_{01}[fgh]) \lra H^1(\Q,W_{01}[f_\alpha,g_\alpha,h_\alpha])$ denote the ordinary and anti-ordinary projections.
Then 
\begin{eqnarray*} 
    (\pi_1, \pi_1,\pi_1)_\ast (e \kappa) &=& \delta_{fgh}^{-1}\times\Big\{  \alpha_f \alpha_g \alpha_h  (\pi_1,\pi_1,\pi_1)_\ast     \\
 & &  \ \  - \alpha_g\alpha_h (\pi_2,\pi_1,\pi_1)_\ast - \alpha_f \alpha_h (\pi_1,\pi_2, \pi_1)_\ast -  \alpha_f\alpha_g (\pi_1,\pi_1,\pi_2)_\ast \\ 
 & & \ \  + \alpha_f (\pi_1, \pi_2,\pi_2)_\ast + \alpha_g(\pi_2,\pi_1,\pi_2)_\ast + \alpha_h  \cdot (\pi_2,\pi_2,\pi_1)_\ast \\
& & \ \    - (\pi_2,\pi_2,\pi_2) _\ast  \Big\}  (\kappa).  \end{eqnarray*}
\begin{eqnarray*}
(\pi_1, \pi_1,\pi_1)_\ast (e^\ast\kappa) &=& \delta_{fgh}^{-1}\times \Big\{  -\beta_f \beta_g \beta_h  (\pi_1,\pi_1,\pi_1)_\ast     \\
 & &  \ \  + p  \beta_g\beta_h (\pi_2,\pi_1,\pi_1)_\ast + p  \beta_f \beta_h (\pi_1,\pi_2, \pi_1)_\ast  +p  \beta_f\beta_g (\pi_1,\pi_1,\pi_2)_\ast \\ 
 & & \ \  - p^{2}  \beta_f (\pi_1, \pi_2,\pi_2)_\ast  - p^{2}  \beta_g(\pi_2,\pi_1,\pi_2)_\ast - p^{2}  \beta_h (\pi_2,\pi_2,\pi_1)_\ast \\
& & \ \     + p^{3} (\pi_2,\pi_2,\pi_2) _\ast  \Big\}  (\kappa),  
\end{eqnarray*}
where we recall that
 $\delta_{fgh}:= ((\alpha_f-\beta_f)(\alpha_g-\beta_g)(\alpha_h-\beta_h))$.
 \end{lemma}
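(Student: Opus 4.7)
The plan is to reduce this lemma to a purely formal manipulation by applying Lemma \ref{eqn:ordinary-proj} separately on each of the three tensor factors. Since $W_{01}$ is a triple tensor product of cohomology groups of $X_{01}$ (up to a Tate twist), and since both the ordinary idempotent $e$ and its anti-ordinary counterpart $e^\ast$, as well as the push-forward $(\pi_1,\pi_1,\pi_1)_\ast$, act factor-wise on the $(f,g,h)$-isotypic component, one has
$$(\pi_1,\pi_1,\pi_1)_\ast \circ e \;=\; (\pi_{1\ast}\circ e_f)\otimes (\pi_{1\ast}\circ e_g)\otimes (\pi_{1\ast}\circ e_h),$$
and similarly with $e^\ast$ in place of $e$.

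I would then substitute the single-factor formulas from Lemma \ref{eqn:ordinary-proj}, namely
$$\pi_{1\ast}\circ e \;=\; \delta_\bullet^{-1}(\alpha_\bullet \pi_{1\ast} - \pi_{2\ast}),\qquad \pi_{1\ast}\circ e^\ast \;=\; \delta_\bullet^{-1}(-\beta_\bullet \pi_{1\ast} + p\,\pi_{2\ast}),$$
with $\bullet\in\{f,g,h\}$, and expand the resulting triple tensor products. Each expansion produces $2^3=8$ monomials indexed by the subsets $S\subseteq\{f,g,h\}$ of slots using $\pi_{2\ast}$. In the ordinary case, the subset $S$ contributes the sign $(-1)^{|S|}$ weighted by $\prod_{\phi\notin S}\alpha_\phi$ in front of the triple $\pi$-operator dictated by $S$; in the anti-ordinary case, $S$ contributes $p^{|S|}\prod_{\phi\notin S}(-\beta_\phi)$. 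A common factor of $\delta_{fgh}^{-1}$ pulls out in front, and reading off the eight monomials reproduces the two identities of the lemma term by term.

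There is essentially no obstacle in this argument; it is a bookkeeping exercise in multilinear algebra. The only item meriting a moment's thought is the factor-wise decomposition of $e$ and $e^\ast$ on the isotypic component, which holds because each $U_p$ (resp.\,$U_p^\ast$) appearing in the definition of the triple-product idempotent is supported in a single tensor factor, so the three commuting actions allow Hida's limit to be taken slot by slot.
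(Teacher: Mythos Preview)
Your proposal is correct and follows exactly the approach the paper takes: the paper's own proof is simply the one-line statement ``This follows directly from Lemma~\ref{eqn:ordinary-proj}.'' Your argument spells out explicitly the factor-wise tensor decomposition and the expansion of the eight monomials, which is precisely what is implicit in that remark.
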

 \begin{proof}
 This follows directly from Lemma \ref{eqn:ordinary-proj}.  
 \end{proof}

Recall the notations
$$ \kk := k-2, \qquad \llll  = \ell-2, \qquad \mm := m-2,  \qquad  r:= (\kk+\llll+\mm)/2.$$

Let $\cA$ denote the Kuga-Sato variety over $X$ as introduced in \ref{subsec:forms}.  In \cite[Definitions 3.1,3.2 and 3.3]{DR1},
   a generalized diagonal cycle $$\Delta^{\kk,\llll,\mm} = \Delta_0^{\kk,\llll,\mm} \, \in \, \CH^{r+2}(\cA^{\kk}\times \cA^{\llll}\times \cA^{\mm},\Q)$$
   is associated to the triple $(\kk,\llll,\mm)$.

When $\kk, \llll, \mm >0$,  $\Delta^{\kk,\llll,\mm} $ is 
 obtained by choosing subsets
 $A$, $B$ and $C$ of the set $\{1,\ldots, r\}$ which satisfy:
 $$ \# A = \kk, \qquad \#B =\llll,  \qquad \#C = \mm,  \quad \qquad A\cap B \cap C = \emptyset,
 $$
 $$  \#(B\cap C) = r-\kk, \qquad  \#(A\cap C) = 
r-\llll, \qquad   \#(A\cap B) = r-\mm.$$
 The cycle $\Delta^{\kk,\llll,\mm} $ is defined as the image of the
 embedding $\cA^{r}$ into $\cA^{\kk}\times \cA^{\llll} \times \cA^{\mm}$ given by sending 
 $(E,(P_1,\ldots,P_r))$ to $( (E,P_A), (E,P_B), (E,P_C))$, where for instance 
 $P_A$ is a shorthand for the $\kk$-tuple of points $P_j$ with $j\in A$.

Let also $\Delta_{01}^{\kk,\llll,\mm} \in \CH^{r+2}(\cA^{\kk}\times \cA^{\llll}\times \cA^{\mm}) $ 
denote the generalised diagonal cycle in the product of the three Kuga-Sato varieties 
of weights $(k,\ell,m)$ fibered over $X_{01}$, defined in a similar way as in \eqref{Delta01} and along the same lines as recalled above. 

More precisely, $\Delta_{01}^{\kk,\llll,\mm}$ is defined as the schematic closure in $\cA^{\kk}\times \cA^{\llll}\times \cA^{\mm}$ of the set of tuples $((E,C_1,P_A),(E,C_2,P_B),(E,C_3,P_C))$ where $P_A, P_B, P_C$ are as above, and $C_1, C_2, C_3$ is a triple of pairwise distinct subgroups of order $p$ in the elliptic curve $E$.

Since the triple $(\kk,\llll,\mm)$ is fixed throughout this section, in order to alleviate notations in the statements below we shall simply denote $\Delta^\sharp$ and $\Delta_{01}^\sharp$ for $\Delta^{\kk,\llll,\mm}$ and $\Delta_{01}^{\kk,\llll,\mm}$ respectively.
 
 \begin{lemma} 
\label{lemma:projecting-diagonal} 
The following identities hold in $\CH^{r+2}(\cA^{\kk}\times \cA^{\llll}\times \cA^{\mm})$:
\begin{eqnarray*}
 (\pi_1,\pi_1,\pi_1)_\ast (\Delta^\sharp_{01}) &=& {(p+1)p(p-1)}(\Delta^\sharp), \\
 (\pi_2,\pi_1,\pi_1)_\ast (\Delta^\sharp_{01}) &=& {p(p-1)} \times (T_p,1,1) (\Delta^\sharp), \\
  (\pi_1,\pi_2,\pi_1)_\ast (\Delta^\sharp_{01}) &=& {p(p-1)} \times (1,T_p,1) (\Delta^\sharp), \\
  (\pi_1,\pi_1,\pi_2)_\ast (\Delta^\sharp_{01}) &=& {p(p-1)} \times (1,1, T_p) (\Delta^\sharp), \\
   (\pi_1,\pi_2,\pi_2)_\ast (\Delta^\sharp_{01}) &=& {(p-1)} \times ( (1,T_p,T_p)(\Delta^\sharp) - p^{r-\kk} D_1)\\
  (\pi_2,\pi_1,\pi_2)_\ast (\Delta^\sharp_{01}) &=& {(p-1)} \times ( ( T_p,1, T_p)(\Delta^\sharp) -  p^{r-\llll} D_2)\\
  (\pi_2,\pi_2,\pi_1)_\ast (\Delta^\sharp_{01}) &=&  {(p-1)} \times ( ( T_p,  T_p, 1)(\Delta^\sharp) - p^{r-\mm}D_3)\\  
   (\pi_2,\pi_2,\pi_2)_\ast (\Delta^\sharp_{01}) &=&   (T_p,T_p,T_p)(\Delta^\sharp)  -  p^{r-\kk} E_1 -  p^{r-\llll} E_2 - p^{r-\mm} E_3   \\ 
   & & -  p^r (p+1) \Delta^\sharp, 
   \end{eqnarray*}
   where the cycles $D_1$, $D_2$ and $D_3$ satisfy
   \begin{eqnarray*}
 ([p],1,1)_\ast  (D_1) = p^{\kk} (T_p,1,1)_\ast (\Delta^\sharp),  &&  (1,[p],1)_\ast (D_2) = p^{\llll}(1,T_p,1)_\ast(\Delta^\sharp),  \\
  (1,1,[p])_\ast (D_3) = p^{\mm}(1,1,T_p) (\Delta^\sharp),  &&
  \end{eqnarray*}
   the cycles $E_1$, $E_2$ and $E_3$ satisfy
\begin{eqnarray*}
([p],1,1)_\ast (E_1)   = p^{\kk}(T_{p^2},1,1) (\Delta^\sharp), 
   & & 
    (1,[p],1)_\ast (E_2) = p^{\llll}(1,T_{p^2},1)(\Delta^\sharp), \\
 (1,1, [p])_\ast (E_3) = p^{\mm}(1,1,T_{p^2})(\Delta^\sharp), & &
  \end{eqnarray*}
   and 
    $T_{p^2} := T_p^2 -  (p+1)[p]$. 
   \end{lemma}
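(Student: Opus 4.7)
The plan is to prove each of the eight formulas by a direct case analysis, exploiting the fact that $\Delta_{01}^\sharp$ is parametrized by tuples $(A,C_1,C_2,C_3,P_1,\ldots,P_r)$ with $C_1,C_2,C_3$ pairwise distinct order-$p$ subgroups of $A$ and the $P_i$ the usual Kuga-Sato data. The essential geometric inputs are: (i) $\pi_1$ forgets $C_i$ (trivial on Kuga-Sato fibers), while $\pi_2$ sends $(A,C,P_\bullet)$ to $(A/C,\phi(P_\bullet))$ with $\phi:A\to A/C$ the quotient isogeny of degree $p$; (ii) the Hecke correspondence $T_p$ on $\cA^\kk$ is precisely the cycle $(\pi_2,\pi_1)_\ast[\cA_{01}^\kk]$ in $\cA^\kk\times\cA^\kk$; (iii) the dual isogeny relation $\hat\phi\circ\phi=[p]$ controls the factors of $p$ introduced by $\pi_2$.

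The first four formulas follow by direct counting. For $(\pi_1,\pi_1,\pi_1)_\ast$ the map does nothing to Kuga-Sato fibers, so we only count pairwise distinct triples $(C_1,C_2,C_3)$: exactly $(p+1)p(p-1)$ per elliptic curve, yielding the first identity. For each one-$\pi_2$ case, say $(\pi_2,\pi_1,\pi_1)_\ast$, the image on the first factor is $(A/C_1,\phi_1(P_A))$ while $C_2,C_3$ vary freely over subgroups distinct from $C_1$ and each other, giving multiplicity $p(p-1)$; the resulting image is $(T_p,1,1)\Delta^\sharp$ by input (ii) above. The three symmetric statements follow identically.

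The three two-$\pi_2$ formulas require inclusion-exclusion on the constraint that the two "active" subgroups be distinct. For $(\pi_1,\pi_2,\pi_2)_\ast$, fixing $C_1$ contributes a factor $p-1$, and summing over all pairs $(C_2,C_3)$ yields $(1,T_p,T_p)\Delta^\sharp$; subtracting the coincidence locus $C_2=C_3$ gives the correction. On this coincidence locus, both $\phi_2=\phi_3=\phi:A\to A/C$ act on the $r-\kk$ Kuga-Sato indices shared between the second and third factors of $\Delta^\sharp$ through the \emph{same} isogeny, which accounts for a fiber multiplicity of $p^{r-\kk}$ relative to the "reduced" cycle $D_1$. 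The defining relation $([p],1,1)_\ast D_1=p^\kk(T_p,1,1)\Delta^\sharp$ then encodes the identity $\hat\phi\phi=[p]$: pulling $D_1$ through $[p]$ on the first factor trades the information stored in $(A,\phi_B^{-1},\phi_C^{-1})$ on the second and third factors for the information $(A/C,\phi_A)$ on the first, balancing $p^{r-\kk}\cdot p^\kk=p^r$ of Kuga-Sato freedom on each side. The formulas for $D_2,D_3$ follow by symmetry.

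For the final three-$\pi_2$ formula $(\pi_2,\pi_2,\pi_2)_\ast$, the same idea applies but with a full inclusion-exclusion: writing the pairwise-distinct locus as $[\text{all}]-\sum_{i<j}[\{C_i=C_j\}]+2[\{C_1=C_2=C_3\}]$, the three terms produce $(T_p,T_p,T_p)\Delta^\sharp$, the three single-coincidence corrections $-p^{r-\kk}E_i$, and the triple-coincidence contribution $-p^r(p+1)\Delta^\sharp$ respectively (the apparent sign discrepancy in the IE is absorbed into the $E_i$'s, whose defining relations involving $T_{p^2}=T_p^2-(p+1)[p]$ are tailored precisely to handle the over-count of the triple-coincidence locus contained in each single-coincidence locus). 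The factor $p^r$ in the last term records that all $r$ shared Kuga-Sato indices pass through the common isogeny $\phi$ of degree $p$, while $p+1$ counts subgroups $C\subset A$. The main obstacle throughout is the bookkeeping of Kuga-Sato multiplicities introduced by coincident isogenies; the implicit characterizations of $D_i$ and $E_i$ via $[p]$ are engineered to avoid an explicit construction of these cycles as images of maps from auxiliary varieties, reducing the verification to matching the $\phi\hat\phi=[p]$ relation on appropriate fibers.
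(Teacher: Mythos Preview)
Your proposal is essentially correct and follows the same approach as the paper: both arguments rest on a direct moduli description of $\Delta^\sharp_{01}$ and a case analysis on which of the subgroups $C_1,C_2,C_3$ coincide, with the Kuga--Sato multiplicities governed by the shared index sets $B\cap C$, $A\cap C$, $A\cap B$ and the relation $\hat\phi\phi=[p]$.

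Two points of imprecision are worth flagging. First, in the last formula your inclusion--exclusion bookkeeping is stated somewhat loosely. The paper does not absorb any ``sign discrepancy'' into the $E_i$: rather, it takes $E_i$ to be the \emph{exclusive} single-coincidence cycle (e.g.\ $E_1$ records $C_1\neq C_2=C_3$), and a direct partition of $(T_p,T_p,T_p)\Delta^\sharp$ by coincidence type gives the formula immediately. Your IE yields the same result once one notes that each inclusive locus $\{C_i=C_j\}$ contributes $p^{r-\bullet}E_\bullet+(p+1)p^r\Delta^\sharp$, so the $-3(p+1)p^r+2(p+1)p^r=-(p+1)p^r$ falls out automatically; no redefinition of $E_i$ is needed. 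Second, the verification that $([p],1,1)_\ast E_1=p^{\kk}(T_{p^2},1,1)\Delta^\sharp$ deserves one more sentence: since $C_1\neq C$, the composite $\phi_1\hat\phi:E''\to E'$ has cyclic kernel of order $p^2$, which is exactly what $T_{p^2}=T_p^2-(p+1)[p]$ parametrises; the degree $p^{\kk}$ then comes from the $\kk$ indices in $A$, each lifting through $[p]$ with $p$-fold ambiguity (after accounting for the constraints from the second and third factors). The paper makes this explicit, while you gesture at it via the dual-isogeny relation; the idea is the same.
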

\begin{proof}
 The first four identities  are straightforward: the map
  $ \pi_1\times \pi_1\times\pi_1$ induces a finite map from
   $\Delta^\sharp_{01}$ to $\Delta^\sharp$ of degree $(p+1)p(p-1)$, which is 
   the number of possible choices of an ordered triple of distinct 
   subgroups of order $p$ on an elliptic curve, and likewise 
   $\pi_2\times\pi_1\times \pi_1$ induces a map of degree $p(p-1)$ from $\Delta^\sharp_{01}$ 
   to $(T_p,1,1) \Delta^\sharp$.
The remaining identities follow from combinatorial reasonings based on the explicit
 description of the  cycles $\Delta^\sharp_{01}$ and $\Delta^\sharp$. 
For the $5$th identity, observe that  $ (\pi_1,\pi_2,\pi_2)_\ast$
induces a degree $(p-1)$ map from $ \Delta^\sharp_{01}$ to the variety $X$ 
parametrising triples $((E,P_A), (E', P_B'), (E'',P_C''))$ for  which there are distinct 
cyclic $p$-isogenies
$\varphi':E \lra E'$ and $\varphi': E\lra E''$,  
the system of points   $P_B'\subset E'$ and $P_C''\subset E''$ 
indexed by the sets $B$ and $C$
satisfy
$$  \varphi'(P_{A\cap B}) = P_{A\cap B}', \quad
 \varphi''(P_{A\cap C}) =  P_{A\cap C}'',$$
 and for which there exists points $Q_{B\cap C}\subset E$ indexed by $B\cap C$ satisfying
 $$ \varphi'(Q_{B\cap C}) = P'_{B\cap C}, \qquad \varphi''(Q_{B\cap C}) = P''_{B\cap C}.$$
On the other hand, $(1,T_p,T_p)$ parametrises triples of the   same type, in
 which $E'$ and $E''$ need not be distinct. It follows that
\begin{equation}
\label{eqn:prr1}
 (1,T_p,T_p) (\Delta^\sharp)   = X+ p^{r-\kk} D_1,
 \end{equation}
where  the closed points of $D_1$ correspond to   triples  
 $((E,P_A), (E',P_B'), (E', P_C'))$ for  which there is a   cyclic $p$-isogeny $\varphi':E \lra E'$ satisfying 
 $$  \varphi'(P_{A\cap B}) = P_{A\cap B}', \qquad
 \varphi'(P_{A\cap C}) =  P_{A\cap C}'.$$
 The coefficient of $p^{r-\kk}$ in \eqref{eqn:prr1}
  arises because each closed point of $D_1$ comes from 
 $p^{\#(B\cap C)}$ distinct closed points on $(1,T_p,T_p)(\Delta^\sharp)$, obtained by translating the points $P_j \in P_{B\cap C}$  with 
 $j\in B\cap C$ by arbitrary elements of $\ker(\varphi)$.
 The fifth identity now follows after noting that the map $([p],1,1])$ induces a  map
 of degree $p^{\kk}$ from $D_1$ to $(T_p,1,1)_\ast\Delta^\sharp$. 
  The $6$th and $7$th identity can be treated with  an  identical reasoning by  interchanging  the three 
  factors in $W^{\kk} \times W^{\llll} \times W^{\mm}$. 
As for the last identity,
the map  $(\pi_2,\pi_2,\pi_2)$ induces a map of degree $1$ to the variety $Y $ consisting of triples 
$(E',E'',E''')$ of elliptic curves which are $p$-isogenous to a common elliptic curve $E$ and distinct.
But it is not hard to see that
$$ (T_p,T_p,T_p) (\Delta^\sharp) =  Y + p^{r-\kk} E_1 + p^{r-\llll} E_2 + p^{r-\mm} E_3 + p^r (p+1) \Delta^\sharp$$
where the additional terms on the right hand side account for triples $(E',E'',E''')$ where
$E'\ne E''=E'''$, where $ E'' \ne    E' = E'''$, where $E'''\ne E' = E''$, and where
$E'=E''=E'''$ respectively.
 \end{proof}
 
Assume for the remainder of the section that  $\kk,\llll,\mm>0$. Recall the projectors $\epsilon_{\kk}$ of \eqref{idempotent}. It was shown in \cite[\S 3.1]{DR1} that  $(\epsilon_{\kk},\epsilon_{\llll},\epsilon_{\mm}) \Delta^{\kk,\llll,\mm}$ is a null-homologous cycle and we may define
\begin{equation}
\label{cycleDR1}
\kappa(f,g,h) := \pi_{f,g,h}\, \AJ_{\et}((\epsilon_{\kk},\epsilon_{\llll},\epsilon_{\mm}) \Delta^{\kk,\llll,\mm})  \in H^1(\Q,W_0[f,g,h])
\end{equation}
as the image of this cycle under the $p$-adic \'etale Abel-Jacobi map, followed by the natural projection from $H^{2c-1}_{\et}(\bar\cA^{\kk}\times \bar\cA^{\llll}\times \bar\cA^{\mm},\Q_p(c))$ to $W_0^{\kk,\llll,\mm}$ induced by the K\"unneth decomposition and  the projection  $\pi_{f,g,h}$.

 It follows from \cite[(66)]{DR1}, \eqref{idempotent} and the vanishing of the terms
$H^i_{\et}(\bar X_1,\cH^{\kk})$ for $i\ne 1$ when $\kk>0$, that the class $\kappa(f,g,h)$ is realized by the $(f,g,h)$-isotypic component of the same extension class as in \eqref{eqn:extension-construction-klm}, after replacing  $X_1$  by the curve $X=X_0$ and $\Delta=\Delta^{0,0,0}$ is taken to be the usual diagonal cycle in $X^3$. In the notations of \eqref{AJklm}, this amounts to 
\begin{equation}\label{DR1bis}
\kappa(f,g,h) = \pi_{f,g,h}\AJ_{\kk,\llll,\mm}(\Delta).
\end{equation}

Similar statements holds over the curve $X_{01}$. Namely, we also have the following:

\begin{proposition}  The cycle $(\epsilon_{\kk},\epsilon_{\llll},\epsilon_{\mm}) \Delta_{01}^{\kk,\llll,\mm}$ is null-homologous and the following equality of classes holds in $H^1(\Q, W_{01}[f_\alpha,g_\alpha,h_\alpha])$: 
\begin{equation}\label{cycleDR01}
\kappa_{01}(f_\alpha,g_\alpha,h_\alpha) = p^3 \, \cdot \, \pi_{f_\alpha,g_\alpha,h_\alpha}\, \AJ_{\et}((\epsilon_{\kk},\epsilon_{\llll},\epsilon_{\mm}) \Delta_{01}^{\kk,\llll,\mm}).
\end{equation}
\end{proposition}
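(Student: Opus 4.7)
The plan is to apply Corollary \ref{cor:special-klm} at the trivial characters $(\omega_1,\omega_2,\omega_3;\delta)=(1,1,1;1)$ to rewrite $\spec^\ast_{\kk,\llll,\mm}\hkappa_\infty(1,1,1;1)$ as the Abel--Jacobi class of an explicit cycle on $X_1^3$, then push this class down to $X_{01}^3$ via a combinatorial degree count for $\mu^3_\ast$, and finally match the resulting extension with the \'etale Abel--Jacobi image of $(\epsilon_\kk,\epsilon_\llll,\epsilon_\mm)\Delta_{01}^{\kk,\llll,\mm}$. The null-homology of $(\epsilon_\kk,\epsilon_\llll,\epsilon_\mm)\Delta_{01}^{\kk,\llll,\mm}$ is handled first, via the K\"unneth argument already used in \cite[\S 3.1]{DR1}: its cycle class in $H^{2r+4}_{\et}(\bar\cA^\kk\times\bar\cA^\llll\times\bar\cA^\mm,\Q_p(r+2))$ factors, through the triple analogue of \eqref{idempotent}, through $\bigoplus_{i+j+k=4}H^i_{\et}(\bar X_{01},\cH^\kk)\otimes H^j_{\et}(\bar X_{01},\cH^\llll)\otimes H^k_{\et}(\bar X_{01},\cH^\mm)$ up to Tate twist, and for $\kk,\llll,\mm>0$ the relevant $H^0$'s and $H^2$'s vanish, killing every summand.

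Combining \eqref{kappa1} with Corollary \ref{cor:special-klm} then gives
$$\kappa_{01}(f_\alpha,g_\alpha,h_\alpha)=\frac{1}{C_q}\,\pi_{f_\alpha,g_\alpha,h_\alpha}\,\mu^3_\ast\AJ_{\kk,\llll,\mm}\bigl(\Delta_1^\circ(1,1,1;1)\bigr),\quad\Delta_1^\circ(1,1,1;1)=\frac{p^3}{(p-1)^3}\theta_q\!\!\sum_{[a,b,c]\in I_1}\!\!\Delta_1[a,b,c].$$
The combinatorial core is the identity $\mu^3_\ast\Delta_1[a,b,c]=2\,\Delta_{01}^{\pm}$, with sign equal to the quadratic residue symbol $(abc/p)$: by \eqref{eqn:diamond-cycles} the stabilizer of $\Delta_1[a,b,c]$ in the Galois group $I_1=((\Z/p\Z)^\times)^3$ of $\mu^3:X_1^3\to X_{01}^3$ is exactly $\{\pm(1,1,1)\}$, which acts faithfully on the cycle since $p$ is odd, so the restriction of $\mu^3$ to $\Delta_1[a,b,c]$ has generic degree $2$ onto its image, as described in Lemma \ref{lemma-klm}. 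Since the $(p-1)^3$ triples split evenly according to the value of $(abc/p)$, one gets $\mu^3_\ast\sum_{[a,b,c]}\Delta_1[a,b,c]=(p-1)^3\Delta_{01}$, and thus $\mu^3_\ast\Delta_1^\circ(1,1,1;1)=p^3\theta_q\Delta_{01}$, using that $\theta_q$ commutes with $\mu^3_\ast$.

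Substituting back, the operator $\theta_q=(T_q-q-1)^{\otimes 3}$ acts as the scalar $C_q$ on the $(f,g,h)$-isotypic part, cancelling the factor $1/C_q$ and leaving $p^3\pi_{f_\alpha,g_\alpha,h_\alpha}\AJ_{\kk,\llll,\mm}(\Delta_{01})$, where $\AJ_{\kk,\llll,\mm}$ is now interpreted via the extension construction \eqref{eqn:extension-construction-klm} applied on $X_{01}^3$. The remaining identification with $p^3\pi_{f_\alpha,g_\alpha,h_\alpha}\AJ_{\et}((\epsilon_\kk,\epsilon_\llll,\epsilon_\mm)\Delta_{01}^{\kk,\llll,\mm})$ is the $X_{01}$-analogue of \eqref{DR1bis}: the projector $(\epsilon_\kk,\epsilon_\llll,\epsilon_\mm)$ together with \eqref{idempotent} and the K\"unneth vanishing recalled above matches the \'etale Abel--Jacobi on the Kuga--Sato triple with the extension construction on $X_{01}^3$ with $\cH^{\kk,\llll,\mm}$-coefficients. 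The main obstacle is carrying out this last identification cleanly, since the idempotent must be tracked across the change of ambient variety from the Kuga--Sato triple to $X_{01}^3$, but the argument is strictly parallel to the $X=X_0$ case treated in \cite[\S 3.1]{DR1} and \eqref{DR1bis}, and carries over verbatim.
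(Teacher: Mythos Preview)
Your argument follows the same path as the paper's: apply Corollary~\ref{cor:special-klm} at the trivial characters, push forward by $\mu^3$, count degrees to extract the factor $p^3$, and then invoke the $X_{01}$-analogue of \eqref{DR1bis} to pass from the sheaf-theoretic $\AJ_{\kk,\llll,\mm}$ to the Kuga--Sato $\AJ_{\et}$. The only wrinkle is your intermediate identity $\mu^3_\ast\Delta_1[a,b,c]=2\,\Delta_{01}^{\pm}$: in Lemma~\ref{lemma-klm} the paper \emph{defines} $\Delta_{01}^{\pm}$ as that very pushforward, so your equation literally reads $\Delta_{01}^{\pm}=2\Delta_{01}^{\pm}$; what you actually mean (and what your stabiliser argument correctly establishes) is that the map has degree $2$ onto the \emph{reduced image}, and this is exactly what is needed to make the total degree onto the reduced cycle $\Delta_{01}$ of \eqref{Delta01} equal to $(p-1)^3$, matching the paper's count and giving the factor $p^3$ in the end.
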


\begin{proof} Corollary \ref{cor:special-klm} together with \eqref{kappa1} imply that
  $$\kappa_1(f_\alpha,g_\alpha,h_\alpha) =  \frac{1}{C_q}\, \cdot \pi_{f_\alpha,g_\alpha,h_\alpha}\, \AJ_{\kk,\llll,\mm}(\Delta_1^\circ(1,1,1;\delta)),$$
  in which $\delta=1$ is the trivial character of $(\Z/p\Z)^\times$. Since
 $\mu^3$ induces a finite map of degree $(p-1)^3$ from the support of $\Delta_1(1,1,1;\delta)$ to $\Delta_{01}$, it follows from the convention adopted in \eqref{def-cor315} that
 $$
 \kappa_{01}(f_\alpha,g_\alpha,h_\alpha) := \mu^3_\ast \,\kappa_1(f_\alpha,g_\alpha,h_\alpha) =   \frac{p^3}{C_q}\, \cdot \pi_{f_\alpha,g_\alpha,h_\alpha}\, \AJ_{\kk,\llll,\mm}(\Delta_{01}^\circ),
 $$
where  $\AJ_{\kk,\llll,\mm}(\Delta_{01}^\circ)$ is defined to be the class realized by the same extension class as in \eqref{eqn:extension-construction-klm}, after replacing  $X_1$  by the curve $X_{01}$ and replacing $\Delta$ by the cycle $\Delta^\circ_{01}$ arising from \eqref{Delta01}. Since $\Delta_{01}^{\kk,\llll,\mm}$ is fibered over $\Delta_{01}$, the same argument as in \eqref{DR1bis} then shows that
$$
 \AJ_{\kk,\llll,\mm}(\Delta_{01}) =  \AJ_{\et}((\epsilon_{\kk},\epsilon_{\llll},\epsilon_{\mm}) \Delta_{01}^{\kk,\llll,\mm}).
$$
Since $\pi_{f_\alpha,g_\alpha,h_\alpha}(\Delta_{01}) = \frac{1}{C_q}\pi_{f_\alpha,g_\alpha,h_\alpha}(\Delta_{01}^\circ)$, the proposition follows.
\end{proof}

 \begin{theorem}
 \label{prop:main-crystalline}
With notations as before, letting $c= r+2$, we have
 \begin{equation*}
 (\varpi_1,\varpi_1,\varpi_1)_\ast \,\, \kappa_1(f_\alpha,g_\alpha,h_\alpha)  =  
 \frac{ \cE^{\mathrm{bal}}(f_\alpha,g_\alpha,h_\alpha) }{ \cE(f_\alpha) \cE(g_\alpha) \cE(h_\alpha)}
  \times \kappa(f,g,h),
  \end{equation*}
  where 
 $$
 \cE^{\mathrm{bal}}(f_\alpha,g_\alpha,h_\alpha) =   (1-\alpha_f\beta_g\beta_h p^{-c}) (1-\beta_f\alpha_g\beta_h p^{-c})
  (1-\beta_f\beta_g\alpha_h p^{-c})
 (1-\beta_f\beta_g\beta_h p^{ -c}), $$
 and
 $$
 \cE(f_\alpha) =  1-\chi_f^{-1}(p) \beta_f^2 p^{1-k}, \quad
 \cE(g_\alpha) = 1- \chi_g^{-1}(p) \beta_g^2 p^{1-\ell}, \quad
 \cE(h_\alpha) = 1- \chi_h^{-1}(p) \beta_h^2 p^{1-m}.
$$
 \end{theorem}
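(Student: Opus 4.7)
The plan is to reduce from the modular curve $X_1$ to the intermediate curve $X_{01}$ using the proposition just above the theorem, then unfold the anti-ordinary projector using the pair of formulas from Lemma \ref{eqn:ordinary-proj}, and finally evaluate the push-forwards of the cycle $\Delta_{01}^{\kk,\llll,\mm}$ to $X^3$ via the combinatorial identities in Lemma \ref{lemma:projecting-diagonal}. The first move is to factor $(\varpi_1,\varpi_1,\varpi_1) = (\pi_1,\pi_1,\pi_1)\circ \mu^3$, so that together with the proposition preceding the theorem one has
$$(\varpi_1,\varpi_1,\varpi_1)_\ast\,\kappa_1(f_\alpha,g_\alpha,h_\alpha)\;=\;p^3\,(\pi_1,\pi_1,\pi_1)_\ast\,\pi_{f_\alpha,g_\alpha,h_\alpha}\,\AJ_{\et}\bigl((\epsilon_{\kk},\epsilon_{\llll},\epsilon_{\mm})\,\Delta_{01}^{\kk,\llll,\mm}\bigr).$$
Writing $\pi_{f_\alpha,g_\alpha,h_\alpha} = e^{\ast}\pi_{f,g,h}$ and applying Lemma \ref{lemma:any-cycle}, the composite $(\pi_1,\pi_1,\pi_1)_\ast\circ e^{\ast}$ expands as the explicit $\delta_{fgh}^{-1}$-linear combination of the eight operators $(\pi_{i_1},\pi_{i_2},\pi_{i_3})_\ast$ with $(i_1,i_2,i_3)\in\{1,2\}^3$ whose coefficients lie in $\Z[\beta_f,\beta_g,\beta_h,p]$.

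Next, since the $p$-adic étale Abel-Jacobi map commutes with proper push-forward, one may push forward the cycle before applying $\AJ_{\et}$. Substituting the eight identities of Lemma \ref{lemma:projecting-diagonal} into the expansion of the previous paragraph replaces each $(\pi_{i_1},\pi_{i_2},\pi_{i_3})_\ast\Delta_{01}^{\kk,\llll,\mm}$ by a linear combination of the diagonal cycle $\Delta^{\kk,\llll,\mm}$ acted on by tensor products of the correspondences $1$, $T_p$, $T_{p^2} = T_p^2-(p+1)[p]$, together with the auxiliary cycles $D_i,E_i$. On the $(f,g,h)$-isotypic component of $W_0$ the good Hecke correspondences act as scalars: $T_p^{(f)} = \alpha_f+\beta_f$, $[p]^{(f)} = \xi_f/p$ (as recorded in the proof of Lemma \ref{lemma:from-wiles}), and $T_{p^2}^{(f)} = (\alpha_f+\beta_f)^2-(p+1)\xi_f/p$, together with the analogues for $g$ and $h$. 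The defining relations $([p],1,1)_\ast D_1 = p^{\kk}(T_p,1,1)\Delta^{\sharp}$ and $([p],1,1)_\ast E_1 = p^{\kk}(T_{p^2},1,1)\Delta^{\sharp}$ then determine $\pi_{f,g,h}\AJ_{\et}(\epsilon\,D_i)$ and $\pi_{f,g,h}\AJ_{\et}(\epsilon\,E_i)$ as explicit scalar multiples of $\kappa(f,g,h) = \pi_{f,g,h}\AJ_{\et}\bigl((\epsilon_{\kk},\epsilon_{\llll},\epsilon_{\mm})\Delta^{\kk,\llll,\mm}\bigr)$; for example, $\pi_{f,g,h}\AJ_{\et}(\epsilon\,D_1) = p^{\kk+1}(\alpha_f+\beta_f)\xi_f^{-1}\kappa(f,g,h)$.

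The final step is the algebraic collection: summing all eight contributions, dividing by $\delta_{fgh}$, multiplying by $p^3$, and rewriting everything in terms of $\alpha_f,\beta_f,\alpha_g,\beta_g,\alpha_h,\beta_h$ via the Vieta identities $\xi_f = \alpha_f\beta_f$ and $a_p(f) = \alpha_f+\beta_f$ (similarly for $g,h$). The denominator $\delta_{fgh}$ factors as $\alpha_f\alpha_g\alpha_h\bigl(1-\tfrac{\beta_f}{\alpha_f}\bigr)\bigl(1-\tfrac{\beta_g}{\alpha_g}\bigr)\bigl(1-\tfrac{\beta_h}{\alpha_h}\bigr) = \alpha_f\alpha_g\alpha_h\,\cE(f_\alpha)\cE(g_\alpha)\cE(h_\alpha)$, using the identity $\chi_f^{-1}(p)\beta_f^2 p^{1-k} = \beta_f/\alpha_f$ and its analogues. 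It therefore suffices to check that the resulting numerator — once the appropriate power $p^{2-c}$ coming from the weight twist on the Kuga–Sato side has been extracted — collapses to the product
$$\alpha_f\alpha_g\alpha_h(1-\alpha_f\beta_g\beta_h p^{-c})(1-\beta_f\alpha_g\beta_h p^{-c})(1-\beta_f\beta_g\alpha_h p^{-c})(1-\beta_f\beta_g\beta_h p^{-c}).$$

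The main obstacle is precisely this final symbolic identity: eight terms on one side must reassemble into four Euler factors on the other. The expected mechanism is that, after grouping contributions by the parity of the multi-index $(i_1,i_2,i_3)$, the trinomial relations $(1-\alpha_{\bullet})(1-\beta_{\bullet}) = 1-a_p(\bullet)+\xi_{\bullet}$ in each of the three variables force the required factorization, with the degenerate cycles $D_i,E_i$ supplying exactly the cross-terms needed to complete the four balanced Euler factors. Keeping careful track of the powers of $p$ induced by the weight twist $(2-t)$ and the Tate twists in the Abel-Jacobi target is where most of the bookkeeping lives.
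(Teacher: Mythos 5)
Your proof takes the same route as the paper's: reduce to a cycle identity over $X_{01}$ via the proposition immediately preceding the theorem (which supplies the factor $p^3$), write $\pi_{f_\alpha,g_\alpha,h_\alpha}=e^\ast\pi_{f,g,h}$ and expand $(\pi_1,\pi_1,\pi_1)_\ast e^\ast$ through Lemma~\ref{lemma:any-cycle}, evaluate the eight push-forwards of $\Delta_{01}^{\kk,\llll,\mm}$ by Lemma~\ref{lemma:projecting-diagonal}, project to the $(f,g,h)$-isotypic component, and verify the resulting rational-function identity --- which the paper also defers to a symbolic check. One small caution: you take $[p]$ to act as $\xi_f/p=\chi_f(p)p^{\kk}$ on the $f$-isotypic part (as in the proof of Lemma~\ref{lemma:from-wiles}), whereas the paper's proof of this theorem asserts it acts as $p^{\kk}$; the two differ by $\chi_f(p)$, and the paper is in fact not internally consistent between these two places, though the discrepancy disappears when $\chi_f(p)=1$.
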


 \begin{proof} In view of \eqref{kappa101}, \eqref{cycleDR1} and \eqref{cycleDR01}, it suffices to prove the claim for the cycles $\Delta^{\kk,\llll,\mm}$ and $(\pi_1,\pi_1,\pi_1)_\ast \,\, e^\ast \Delta_{01}^{\kk,\llll,\mm}$. Since $\kk,\llll,\mm$ are fixed throughout the discussion, we again denote $\Delta^\sharp=\Delta^{\kk,\llll,\mm}$ and $\Delta_{01}^\sharp=\Delta_{01}^{\kk,\llll,\mm}$ to lighten notations.
 
 When combined with Lemma \ref{lemma:any-cycle}, 
 Lemma \ref{lemma:projecting-diagonal}
 equips us with a completely explicit formula for comparing $(\pi_1,\pi_1,\pi_1)_\ast e^\ast(\Delta_{01}^\sharp)$ 
 with the generalised diagonal cycle $\Delta^\sharp$. 
 Namely, since the correspondences $([p],1,1)$, $(1,[p],1)$ and $(1,1,[p])$ induce multiplication by 
 $p^{\kk}$, $p^{\llll}$ and $p^{\mm}$ respectively on the $(f,g,h)$-isotypic parts, while
 $(T_p,1,1)$, $(1,T_p,1)$, and $(1,1,T_p)$ induce multiplication by
 $a_p(f)$, $a_p(g)$, and $a_p(h)$ respectively, it follows that, with notations as in
 the proof of Lemma \ref{lemma:projecting-diagonal},
\begin{eqnarray*}
 \pi_{f,g,h} (D_1) &=& a_p(f) \pi_{f,g,h}(\Delta^\sharp), \\
 \pi_{f,g,h}(D_2) &=& a_p(g) \pi_{f,g,h}(\Delta^\sharp),  \\
  \pi_{f,g,h} (D_3) &=& a_p(h) \pi_{f,g,h}(\Delta^\sharp),
\end{eqnarray*}
    and  that
  \begin{eqnarray*}
\pi_{f,g,h} (E_1)   &=& (a_p^2(f) - (p+1)p^{\kk}) \pi_{f,g,h}(\Delta^\sharp), \\
 \pi_{f,g,h} (E_2) &=&  (a_p^2(g) - (p+1)p^{\llll}) \pi_{f,g,h}(\Delta^\sharp),  \\
 \pi_{f,g,h} (E_3) &=& (a_p^2(h) - (p+1)p^{\mm}) \pi_{f,g,h}(\Delta^\sharp).
  \end{eqnarray*}
  By projecting  the various formulae for
   $ (\pi_1,\pi_1,\pi_1)_\ast (\Delta^\sharp_{01}) $  that are given in Lemma 
   \ref{lemma:projecting-diagonal} to the $(f,g,h)$-isotypic component  and substituting them into Lemma
   \ref{lemma:any-cycle}, one obtains a expression for 
   $e_{f,g,h}(\pi_1,\pi_1,\pi_1)_\ast e^* (\Delta^\sharp_{01})$ as a multiple of 
   $\pi_{f,g,h}(\Delta^\sharp)$ by an explicit factor, which is a 
   rational function in   $\alpha_f$, 
   $\alpha_g$ and $\alpha_h$. This 
   explicit factor
   is somewhat tedious to calculate by hand, but  the identity asserted in 
   Theorem  \ref{prop:main-crystalline}
   is readily  checked with the help of a symbolic algebra package.
 \end{proof}

\section{Triple product $p$-adic $L$-functions and the reciprocity law}
\label{3.1}

Let $(\hf,\hg,\hh)$ be a triple of $p$-adic Hida families  of tame levels $M_{f}$, $M_{g}$, $M_{h}$ and tame characters $\chi_{f}$, $\chi_{g}$, $\chi_{h}$ as in the previous section. Let also $(\hf^*,\hg^*,\hh^*) = (\hf\otimes \bar\chi_f,\hg\otimes \bar\chi_g,\hh\otimes \bar\chi_h)$ denote the conjugate triple. As before, we assume $\chi_{f} \chi_{g} \chi_{h} = 1$ and set $M = \mathrm{lcm}(M_{f},M_{g},M_{h})$.

Let $\Lambda_{\hf}$, $\Lambda_{\hg}$ and $\Lambda_{\hh}$ be the finite flat extensions of $\Lambda$ generated by the coefficients of the Hida families $\hf$, $\hg$ and $\hh$, and set $\Lambda_{\hf \hg \hh} = \Lambda_{\hf} \hat\otimes_{\Z_p}  \Lambda_{\hg} \hat\otimes_{\Z_p}  \Lambda_{\hh}$. Let also $\mathcal{Q}_{\hf}$ denote the fraction field of $\Lambda_{\hf}$ and define
$$
\mathcal{Q}_{\hf,\hg \hh} := \mathcal{Q}_{\hf} \hat\otimes \Lambda_{\hg} \hat\otimes \Lambda_{\hh}.
$$

Let $\cW_{\hf \hg \hh}^{\circ} := \cW_{\hf}^{\circ} \times \cW_{\hg}^{\circ} \times \cW_{\hh}^{\circ} \subset \cW_{\hf \hg \hh} = \mathrm{Spf}(\Lambda_{\hf \hg \hh})$ denote the set of triples of {\em crystalline} classical  points, at which the three Hida families specialize to modular forms with trivial nebentype at $p$ (and may be either old or new at $p$). This set admits a natural partition, namely
$$
\cW_{\hf \hg \hh}^{\circ} =   \cW_{\hf \hg \hh}^{f} \, \sqcup \, \cW_{\hf \hg \hh}^{g} \,\sqcup \, \cW_{\hf \hg \hh}^{h} \,
 \sqcup \, \cW_{\hf \hg \hh}^{\mathrm{bal}}
$$
where 
\begin{itemize}
\item  $\cW_{\hf \hg \hh}^{f}$ denotes the set of points $(x,y,z)\in \cW_{\hf \hg \hh}^{\circ} $ of weights $(k,\ell,m)$ such that $k\geq \ell+m$. 

\item  $\cW_{\hf \hg \hh}^{g}$ and $\cW_{\hf \hg \hh}^{h}$ are defined similarly, replacing the role of $\hf$ with $\hg$ (resp.\,$\hh$).

\item $\cW_{\hf \hg \hh}^{\mathrm{bal}}$ is the set of {\em balanced} triples, consisting of points $(x,y,z)$ of weights $(k,\ell,m)$ 
such that each of the weights is strictly smaller than the sum of the other two.

\end{itemize}

Each of the four subsets appearing in the above partition is dense in $\cW_{\hf \hg \hh}$ for the rigid-analytic topology. 

Recall from \eqref{eqn:hf-isotypic} the spaces of $\Lambda$-adic test vectors $S^{\ordi}_{\Lambda}(M,\chi_f)[\hf]$. 
For any choice of a triple $$(\htf,\htg,\hth)\in S^{\ordi}_{\Lambda}(M,\chi_{f})[\hf] \times S^{\ordi}_{\Lambda}(M,\chi_{g})[\hg] \times S^{\ordi}_{\Lambda}(M,\chi_{h})[\hh]$$ of $\Lambda$-adic test vectors of tame level $M$, in \cite[Lemma 2.19 and Definition 4.4]{DR1} we constructed a $p$-adic $L$-function $\Lp^f(\htf,\htg,\hth)$ in $\mathcal{Q}_{\hf} \hat\otimes \Lambda_{\hg} \hat\otimes \Lambda_{\hh}$, giving rise to a meromorphic rigid-analytic function
\begin{equation}\label{padicL}
\Lp^f(\htf,\htg,\hth): \cW_{\hf \hg \hh} \, \lra \, \C_p.
\end{equation}



As shown in \cite[\S 4]{DR1}, this $p$-adic $L$-function is characterized by an interpolation property relating its values at classical points $(x,y,z)\in \cW_{\hf \hg \hh}^f$  to the central critical value of Garrett's triple-product complex $L$-function $L(\hf_x,\hg_y,\hh_z,s)$ associated to the triple of classical  eigenforms $(\hf_x,\hg_{y},\hh_z)$. The fudge factors appearing in the interpolation property depend heavily on the choice of test vectors: cf.\,\cite[\S 4]{DR1} and \cite[\S 2]{DLR} for more details. In a recent preprint, Hsieh \cite{Hs} has found an  explicit choice of test vectors, which yields a very optimal interpolation formula which shall be   useful for our purposes. We describe it below:

\begin{proposition}\label{padicL-Garrett} 
for every $(x,y,z)\in \cW_{\hf \hg \hh}^{f}$ of weights $(k,\ell,m)$ we have
\begin{equation}
\label{ggg}
\Lp^f(\htf,\htg,\hth)^2(x,y,z) =   \frac{ \mathfrak{a}(k,\ell,m)}{\langle \hf^\circ_x,\hf^\circ_x\rangle^2} \cdot \mathfrak{e}^2(x,y,z)\cdot \prod_{v\mid N\infty} C_v  \times L(\hf^\circ_x, \hg^\circ_y, \hh^\circ_z,c)
\end{equation}
where
\begin{enumerate}
\item[i)] $c=\frac{k+\ell+m-2}{2}$,

\vspace{0.1cm}
\item[ii)] $\mathfrak{a}(k,\ell,m)= (2\pi i)^{-2k}  \cdot (\frac{k+\ell+m-4}{2})! \cdot (\frac{k+\ell-m-2}{2})! \cdot (\frac{k-\ell+m-2}{2})!\cdot (\frac{k-\ell-m}{2})!$,


\vspace{0.1cm}
\item[iii)]  $\mathfrak{e}(x,y,z) =\cE(x,y,z)/ \cE_0(x)\cE_1(x)$ with
 \begin{eqnarray*}
\label{eqn:defE0}
\cE_0(x) & := & 1- \chi_f^{-1}(p)  \beta_{\hf_x}^2  p^{1-k},
\\
\label{eqn:defE1}
\cE_1(x) & := & 1- \chi_f(p) \alpha^{-2}_{\hf_x} p^{\kk},
\\
\label{eqn:defEE}
\cE(x,y,z) & := &
\left(1-  \chi_f(p) \alpha^{-1}_{\hf_x} \alpha_{\hg_y}\alpha_{\hh_z}p^{\frac{k-\ell-m}{2}} \right)\times
\left(1- \chi_f(p) \alpha^{-1}_{\hf_x}  \alpha_{\hg_y} \beta_{\hh_z}  p^{\frac{k-\ell-m}{2}} \right) \\
\nonumber
& & \times \left(1- \chi_f(p) \alpha^{-1}_{\hf_x} \beta_{\hg_y} \alpha_{\hh_z}  p^{\frac{k-\ell-m}{2}}
\right)\times
\left(1-  \chi_f(p)  \alpha^{-1}_{\hf_x} \beta_{\hg_y} \beta_{\hh_z} p^{\frac{k-\ell-m}{2}} \right).
\end{eqnarray*}

\item[iv)] The local constant $C_v\in \Q(\hf_x,\hg_y,\hh_z)$ depends only on the admissible representations of $\GL_2(\Q_v)$ associated to $(\hf_x,\hg_y,\hh_z)$ and on the local components
at $v$ of the test vectors.

\end{enumerate}

Moreover, there exists a distinguished choice of test vectors $(\htf,\htg,\hth)$ (as specified by Hsieh in \cite[\S 3]{Hs}) for which $\Lp^f(\htf,\htg,\hth)$ lies in $\Lambda_{\hf \hg \hh}$ and the local constants  may be taken to be $C_v=1$ at all $v\mid N \infty$.
\end{proposition}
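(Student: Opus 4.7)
The plan is to separate the statement into two parts: the general interpolation formula \eqref{ggg} for arbitrary $\Lambda$-adic test vectors, whose content is essentially in \cite[\S 4]{DR1}, and the optimality assertion with $C_v = 1$ together with integrality, which is Hsieh's contribution \cite{Hs}.

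First I would recall from \cite[Def.\,4.4]{DR1} that $\Lp^f(\htf,\htg,\hth)$ is defined as the $\Lambda$-adic Petersson-type pairing
\[
\Lp^f(\htf,\htg,\hth) \;=\; \htf^{\vee}\bigl(e^\ast(d^{\bullet}\htg\cdot\hth)\bigr),
\]
where $\htf^{\vee}$ is the $\Lambda$-adic dual test vector from \eqref{dualtest}, $d$ denotes the Atkin-Serre theta operator promoted $\Lambda$-adically to nearly overconvergent forms, and $\bullet$ is the appropriate $\Lambda$-adic Tate twist specialising at $(x,y,z) \in \cW^f_{\hf\hg\hh}$ to the balanced exponent $t = (k - \ell - m)/2$. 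Combining the specialisation property of $\htf^\vee$ recorded in \eqref{dualtest} with Proposition~\ref{klz-prop}, one unravels the value at such a classical point as
\[
\Lp^f(\htf,\htg,\hth)(x,y,z) \;=\; \mathfrak{e}(x,y,z)\cdot \frac{\langle \hf^\circ_x,\, \delta^{t}\hg^\circ_y\cdot \hh^\circ_z\rangle}{\langle \hf^\circ_x,\, \hf^\circ_x\rangle},
\]
where $\delta$ is the Maass-Shimura operator and $\mathfrak{e}(x,y,z) = \cE(x,y,z)/(\cE_0(x)\cE_1(x))$. The factor $\cE_0(x)\cE_1(x)$ arises by Proposition~\ref{klz-prop} from passing between pairings involving the $p$-stabilisation $\htf_x$ and the underlying newform $\hf^\circ_x$, while $\cE(x,y,z)$ appears as the $p$-part of the triple-product Euler factor at the four Frobenius roots controlling the balanced regime, produced by the combination of the $p$-depletion and the ordinary projector $e^\ast$.

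Next, squaring this identity and invoking Garrett's integral representation of the triple product $L$-function in the refined local-global form of Ichino, one decomposes the global square of the classical Petersson pairing as
\[
\frac{|\langle \hf^\circ_x,\, \delta^{t}\hg^\circ_y\cdot \hh^\circ_z\rangle|^2}{\langle \hf^\circ_x,\, \hf^\circ_x\rangle^2} \;=\; \mathfrak{a}(k,\ell,m)\cdot \prod_{v\mid N\infty} C_v \cdot \frac{L(\hf^\circ_x,\hg^\circ_y,\hh^\circ_z, c)}{\langle \hf^\circ_x,\hf^\circ_x\rangle^2},
\]
where $\mathfrak{a}(k,\ell,m)$ is the archimedean gamma product at $s = c$ described in item (ii), and each $C_v$ is a local zeta integral depending only on the local admissible representations at $v$ attached to the triple and on the local components of the chosen test vectors. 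Combining with the previous display and squaring yields the formula \eqref{ggg} for a generic choice of $(\htf,\htg,\hth)$.

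The hard part, and the content of the last assertion, is producing an explicit triple $(\htf,\htg,\hth)$ for which simultaneously $C_v = 1$ at all $v\mid N\infty$ and $\Lp^f(\htf,\htg,\hth)$ lies in $\Lambda_{\hf\hg\hh}$ rather than merely in its fraction field. This is achieved in \cite[\S 3]{Hs} by selecting Waldspurger-type new vectors at each bad finite place matched to the local admissible representations of $\GL_2(\Q_v)$ associated with $(\hf_x,\hg_y,\hh_z)$, so that the resulting Ichino local integrals evaluate to $1$, and then promoting the choice to the $\Lambda$-adic setting. The $\Lambda$-adic integrality requires a separate congruence-number analysis for $\hf$ which shows that the apparent denominators introduced by $\htf^{\vee}$ in \eqref{dualtest} are cancelled by Hsieh's test vectors. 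Granting these inputs, the proposition follows by matching constants as above.
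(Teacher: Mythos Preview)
Your sketch is correct and follows the same route as the paper, which simply cites \cite[Theorem~A]{Hs} and notes that the Euler factors $\cE_0$, $\cE_1$, $\cE$ retain the stated form even when one of the specialisations is new at $p$; you have just unpacked what lies behind that citation (the interpolation formula from \cite[\S4]{DR1} plus Ichino's decomposition, with Hsieh supplying the test vectors that normalise the local constants and guarantee integrality).

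Two small imprecisions are worth flagging. First, Proposition~\ref{klz-prop} is not the right reference for the appearance of $\cE_0(x)\cE_1(x)$ here: that proposition concerns functionals on Dieudonn\'e modules and is invoked later for the Perrin--Riou regulator, whereas in the interpolation formula these factors arise directly from the $p$-stabilisation computation as in \cite[Lemma~4.10]{DR1} (same underlying linear algebra, different logical placement). Second, your Ichino display has $|\langle\,\cdot\,,\,\cdot\,\rangle|^2$ on the left, but the proposition asserts an equality for the genuine square $\Lp^f(\htf,\htg,\hth)^2$; bridging the archimedean absolute-value square to the algebraic square requires the rationality and sign analysis carried out in \cite{Hs}, which is one more reason the paper simply cites Hsieh's theorem for the squared identity rather than rederiving it through the complex Petersson product.
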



\begin{proof} This follows from \cite[Theorem A]{Hs}, after spelling out explicitly the definitions involved in Hsieh's formulation. 

Let us remark that throughout the whole article \cite{DR1}, it was implicitly assumed that $\hf_x$, $\hg_\ell$ and $\hh_m$ are all old at $p$, and note  that the definition we have given here of the terms $\cE_0(x)$, $\cE_1(x)$ and $\cE(x,y,z)$  is exactly the same as in \cite{DR1} in such cases, because $\beta_{\hf_x} =  \chi_f(p)\alpha_{\hf_x}^{-1} p^{k-1}$ when $\hf_x$ is old at $p$. 

In contrast with loc.\,cit.,\,in the above proposition we also allow any of the eigenforms $\hf_x$, $\hg_\ell$ and $\hh_m$ to be new at $p$ (which can only occur when the weight is $2$); in such case, recall the usual convention adopted in \S \ref{subsec:forms} to set $\beta_{\phi}=0$ when $p$ divides the primitive level of an eigenform $\phi$. With these notations,  the current formulation of $\cE(x,y,z)$, $\cE_0(x)$ and $\cE_1(x)$ is the correct one, as one can readily verify by rewriting the proof of \cite[Lemma 4.10]{DR1}.
\end{proof}

\subsection{Perrin-Riou's regulator}
\label{4.2}

Recall the $\Lambda$-adic cyclotomic character $\underline{\varepsilon}_{\cyc}$ and the unramified characters $\Psi_{\hf}$, $\Psi_{\hg}$, $\Psi_\hh$ of $G_{\Q_p}$ introduced in Theorem \ref{Vphi-thm}. As a piece of notation, let $\underline{\varepsilon}_{\hf}: G_{\Q_p} \lra \Lambda_{\hf}^\times$ denote the composition of $\underline{\varepsilon}_{\cyc}$ and the natural inclusion $\Lambda^\times \subset \Lambda_{\hf}^\times$, and likewise for $\underline{\varepsilon}_{\hg}$ and $\underline{\varepsilon}_{\hh}$. Expressions like $\Psi_{\hf} \Psi_{\hg}\Psi_{\hh}$ or $\underline{\varepsilon}_{\hf} \underline{\varepsilon}_{\hg} \underline{\varepsilon}_{\hh}$ are a short-hand notation for the $\Lambda_{\hf \hg \hh}^\times$-valued character of $G_{\Q_p}$ given by the tensor product of the three characters. 

Let $ \mathbb{V}_{\hf}$, $\mathbb{V}_{\hg}$ and $\mathbb{V}_{\hh}$ be the Galois representations associated to $\hf$, $\hg$ and $\hh$ in Theorem \ref{Vphi-thm}.

The purpose of this section is describing in precise terms the close connection between the diagonal cycles constructed above and the three-variable triple-product $p$-adic $L$-function. In order to do that, let us introduce the $\Lambda_{\hf \hg \hh}$-modules
\begin{equation}
\mathbb{V}^\dag_{\hf \hg \hh} :=  \mathbb{V}_{\hf}\otimes \mathbb{V}_{\hg}\otimes \mathbb{V}_{\hh}(-1)(\half) =  \mathbb{V}_{\hf}\otimes \mathbb{V}_{\hg}\otimes \mathbb{V}_{\hh}(\varepsilon_{\cyc}^{-1} \underline{\varepsilon}_{\hf}^{-1/2}\underline{\varepsilon}_{\hg}^{-1/2} \underline{\varepsilon}_{\hh}^{-1/2}).
\end{equation}
and
\begin{equation}
\mathbb{V}^\dag_{\hf \hg \hh}(M) :=  \mathbb{V}_{\hf}(M)\otimes \mathbb{V}_{\hg}(M)\otimes \mathbb{V}_{\hh}(M)(-1)(\half).
\end{equation}

The pairing defined in \eqref{eqn:pairing-LambdaG} yields an identification $\Hinfty = H^1_{\et}(\bar X_\infty, \Z_p)^{\otimes 3}(2)(\half)$.
As explained in  \eqref{Vphi-hida*}, $\mathbb{V}^\dag_{\hf \hg \hh}(M)$ is isomorphic to the direct sum of several copies of $\mathbb{V}^\dag_{\hf \hg \hh}$ and there are canonical projections $\varpi_{\hf}$, $\varpi_\hg$, $\varpi_{\hh}$ which assemble into a $G_\Q$-equivariant map
$$
\varpi_{\hf, \hg, \hh}:  \Hinfty = H^1_{\et}(\bar X_\infty, \Z_p)^{\otimes 3}(2)(\half) \lra \mathbb{V}^\dag_{\hf \hg \hh}(M).
$$

Recall the three-variable $\Lambda$-adic global cohomology class
$$
\hkappa_\infty(\epsilon_1\omega^{-\kk}, \epsilon_2 \omega^{-\llll},\epsilon_3 \omega^{-\mm};1)
= \hkappa_\infty(1,1,1;1) \in       H^1(\Q,\Hinfty)
$$
introduced in \eqref{kappakappa}.

Set $C_q(\hf,\hg,\hh) := (a_q(\hf)-q-1)(a_q(\hg)-q-1)(a_q(\hh)-q-1)$. Note that $C_q(\hf,\hg,\hh)$ is a unit in $\Lambda_{\hf \hg \hh}$, because its classical specializations are $p$-adic units (cf.\,\eqref{kappa1}). 

\begin{definition}\label{def-family}
 Define
\begin{equation*}
\hkappa(\hf,\hg,\hh) :=  \frac{1}{C_q(\hf,\hg,\hh)}\, \cdot \, \varpi_{\hf, \hg, \hh *}(\hkappa_\infty(\epsilon_1\omega^{-\kk}, \epsilon_2 \omega^{-\llll},\epsilon_3 \omega^{-\mm};1)) \in  H^1(\Q,\mathbb{V}^\dag_{\hf \hg \hh}(M))
\end{equation*}
to be the projection of the above class to the $(\hf, \hg, \hh)$-isotypical component.  
\end{definition}

In the above definition, we normalize $\hkappa(\hf,\hg,\hh) $ by the constant $C_q(\hf,\hg,\hh)$ so that the classical specializations of $\hkappa(\hf,\hg,\hh)$ at classical points coincide with the classes $\kappa_1(f_\alpha,g_\alpha,h_\alpha)$ introduced in \eqref{kappa1}.

Let $$\mathrm{res}_p: H^1(\Q,\mathbb{V}^\dag_{\hf \hg \hh}(M)) \ra H^1(\Q_p,\mathbb{V}^\dag_{\hf \hg \hh}(M))$$ denote the restriction map 
to the local cohomology at $p$ and set
$$\hkappa_p(\hf,\hg,\hh):=  \mathrm{res}_p(\hkappa(\hf,\hg,\hh)) \in H^1(\Q_p,\mathbb{V}^\dag_{\hf \hg \hh}(M)).$$
 
The main result of this section asserts that the $p$-adic $L$-function $\Lp^f(\htf,\htg,\hth)$ introduced in \S \ref{3.1} can be recast as the  image of the $\Lambda$-adic class $\hkappa_p(\hf,\hg,\hh)$ under a suitable three-variable Perrin-Riou regulator map whose formulation relies on a choice of families of periods which depends on the  test vectors $\htf,\htg,\hth$.  

The recipe we are about to describe depends solely only on the projection of $\hkappa_p(\hf,\hg,\hh)$ to a suitable  
sub-quotient of $\hV^\dag_{\hf\hg\hh}$ which is free of rank one over $\Lambda_{\hf\hg\hh}$, and whose definition requires the following lemma.

\begin{lemma}
\label{lemma:4-step-filtration} 
The Galois representation 
$\mathbb{V}^\dag_{\hf \hg \hh}$ is endowed with a four-step filtration
$$ 0 \subset \mathbb{V}_{\hf \hg \hh}^{++} \subset \mathbb{V}_{\hf \hg \hh}^+ \subset \mathbb{V}_{\hf \hg \hh}^- 
\subset \mathbb{V}^\dag_{\hf \hg \hh}$$ by
$G_{\Q_p}$-stable $\Lambda_{\hf \hg \hh}$-submodules 
of ranks  $0$, $1$, $4$, $7$ and $8$ respectively. 

The group  
 $G_{\Q_p}$ acts 
 on the   successive quotients for this filtration (which are free over $\Lambda_{\hf\hg\hh}$ 
 of ranks $1$, $3$, $3$ and $1$  
 respectively) 
  as a direct sum of one dimensional characters,
 \begin{equation*}
\mathbb{V}_{\hf \hg \hh}^{++} = \heta^{\hf\hg\hh}, \qquad 
\frac{\mathbb{V}_{\hf \hg \hh}^+}{\mathbb{V}_{\hf \hg \hh}^{++}} =  \heta_{\hf}^{ \hg\hh} \oplus 
 \heta_{\hg}^{\hf\hh}  \oplus\heta_{\hh}^{\hf \hg}, \qquad
  \frac{ \mathbb{V}^-_{\hf \hg \hh}}{\mathbb{V}^+_{\hf \hg \hh}} =  \heta^{\hf}_{\hg \hh}  \oplus 
\heta^{\hg}_{\hf \hh}  \oplus \heta^{\hh}_{\hf \hg},   \qquad 
\frac{\mathbb{V}_{\hf \hg \hh}^\dag}{ \mathbb{V}_{\hf \hg \hh}^-} = \heta_{\hf\hg\hh},
\end{equation*}
where
$$
\begin{array}{ll}
 \heta^{\hf\hg\hh} = (\Psi_{\hf} \Psi_{\hg}\Psi_{\hh}  \times  \varepsilon_{\cyc}^{2}(\underline{\varepsilon}_{\hf} \underline{\varepsilon}_{\hg} \underline{\varepsilon}_{\hh})^{1/2},   &      
  \heta_{\hf\hg\hh} =       \Psi_{\hf}   \Psi_{\hg}    \Psi_{\hh} \times \varepsilon_{\cyc}^{-1} (\underline{\varepsilon}_{\hf} \underline{\varepsilon}_{\hg} \underline{\varepsilon}_{\hh})^{-1/2}, \\ \\ 
     \heta_{\hf}^{\hg \hh}  =   \chi_{f}^{-1} \Psi_{\hf}\Psi^{-1}_{\hg} \Psi_{\hh}^{-1} \times  \varepsilon_{\cyc}(\underline{\varepsilon}_{\hf}^{-1} \underline{\varepsilon}_{\hg} \underline{\varepsilon}_{\hh})^{1/2},    & 
            \heta^{\hf}_{\hg \hh}  =   \chi_{f}\Psi_{\hf}^{-1}\Psi_{\hg} \Psi_{\hh}  \times 
  (\underline{\varepsilon}_{\hf}  \underline{\varepsilon}_{\hg}^{-1} \underline{\varepsilon}_{\hh}^{-1})^{1/2}, \\ \\
      \heta_{\hg}^{\hf \hh}  =  \chi_{g}^{-1} \Psi_{\hg}\Psi^{-1}_{\hf}\Psi_{\hh}^{-1} \times \varepsilon_{\cyc}(\underline{\varepsilon}_{\hf} \underline{\varepsilon}_{\hg}^{-1} \underline{\varepsilon}_{\hh})^{1/2},    & 
     \heta^{\hg}_{\hf \hh}  =  \chi_{g}\Psi_{\hf}\Psi_{\hh}\Psi_{\hg}^{-1} \times  (\underline{\varepsilon}_{\hf}^{-1}  \underline{\varepsilon}_{\hg} \underline{\varepsilon}_{\hh}^{-1})^{1/2},  \\ \\
  \heta_{\hh}^{\hf \hg}  =   \chi_{h}^{-1}\Psi_{\hh}\Psi^{-1}_{\hf}\Psi^{-1}_{\hg} \times \varepsilon_{\cyc} (\underline{\varepsilon}_{\hf} \underline{\varepsilon}_{\hg} \underline{\varepsilon}_{\hh}^{-1})^{1/2},   &
               \heta^{\hh}_{\hf \hg}  =   \chi_{h} \Psi_{\hf}\Psi_{\hg}\Psi^{-1}_{\hh} \times (\underline{\varepsilon}_{\hf}^{-1}  \underline{\varepsilon}_{\hg}^{-1} \underline{\varepsilon}_{\hh})^{1/2}. 
         \end{array} $$
 \end{lemma}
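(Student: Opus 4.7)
The proof is essentially a direct computation starting from the Wiles--Hida filtration of Theorem \ref{Vphi-thm} applied to each of the three Hida families. For each $\hphi \in \{\hf,\hg,\hh\}$, that theorem supplies a $G_{\Q_p}$-stable short exact sequence $0\to \mathbb{V}^+_{\hphi}\to \mathbb{V}_{\hphi}\to \mathbb{V}^-_{\hphi}\to 0$ whose unramified quotient is given by the character $\Psi_{\hphi}$ and whose submodule $\mathbb{V}^+_{\hphi}$ is given by $\chi_{\phi}\Psi_{\hphi}^{-1}\varepsilon_{\cyc}^{-1}\underline{\varepsilon}_{\hphi}$.

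The plan is to tensor the three individual Wiles filtrations. On $\mathbb{V}_{\hf}\otimes\mathbb{V}_{\hg}\otimes\mathbb{V}_{\hh}$ I introduce the natural filtration by $G_{\Q_p}$-stable $\Lambda_{\hf\hg\hh}$-submodules $F^0\subset F^1\subset F^2\subset F^3$ indexed by the number of ``$+$'' factors present in each summand: set $F^0 := \mathbb{V}^+_{\hf}\otimes\mathbb{V}^+_{\hg}\otimes\mathbb{V}^+_{\hh}$, take $F^1$ to be the sum of $F^0$ with the three rank-two submodules obtained by replacing exactly one ``$+$'' factor with the full module, and define $F^2$ analogously with two such replacements. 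A straightforward inclusion--exclusion based on the ranks of the pairwise and triple intersections shows the successive ranks are $1,4,7,8$, with each graded piece $F^{i+1}/F^i$ canonically a direct sum of $\binom{3}{i+1}$ rank-one pieces indexed by the positions of the replacements. Twisting the entire filtration by $\varepsilon_{\cyc}^{-1}(\underline{\varepsilon}_{\hf}\underline{\varepsilon}_{\hg}\underline{\varepsilon}_{\hh})^{-1/2}$ yields the filtration on $\mathbb{V}^\dag_{\hf\hg\hh}$ of ranks $0,1,4,7,8$ announced in the lemma.

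It then remains to verify the explicit formulas for the eight characters. For each graded piece, multiply the three Wiles characters dictated by the choice of $+/-$ in each tensor factor, and multiply by the twist character $\varepsilon_{\cyc}^{-1}(\underline{\varepsilon}_{\hf}\underline{\varepsilon}_{\hg}\underline{\varepsilon}_{\hh})^{-1/2}$; wherever three (respectively, two) $\chi_{\phi}$ factors arise, simplify using the hypothesis $\chi_f\chi_g\chi_h=1$ (respectively, $\chi_g\chi_h=\chi_f^{-1}$ and cyclic permutations). The expected symmetry is manifest: the characters with superscript $\hf\hg\hh$ and subscript $\hf\hg\hh$ multiply to the self-duality character $\varepsilon_{\cyc}$, and the pairs $(\heta_{\hf}^{\hg\hh},\heta^{\hf}_{\hg\hh})$ and their cyclic permutations do likewise, providing a useful arithmetic sanity check. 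The only real obstacle is the careful bookkeeping of the square-root character, but this causes no trouble since $\mathbb{V}^\dag_{\hf\hg\hh}$ is defined on a finite flat extension of the Iwasawa algebra on which $(\underline{\varepsilon}_{\hf}\underline{\varepsilon}_{\hg}\underline{\varepsilon}_{\hh})^{1/2}$ is already assumed to make sense.
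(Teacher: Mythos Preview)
Your proposal is correct and follows essentially the same approach as the paper's own proof: both construct the filtration by tensoring the three individual Wiles--Hida filtrations from Theorem~\ref{Vphi-thm}, twist by $\varepsilon_{\cyc}^{-1}(\underline{\varepsilon}_{\hf}\underline{\varepsilon}_{\hg}\underline{\varepsilon}_{\hh})^{-1/2}$, and simplify the resulting characters using $\chi_f\chi_g\chi_h=1$. Your write-up is in fact slightly more detailed (the inclusion--exclusion for the ranks and the self-duality sanity check), but the underlying argument is identical.
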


 \begin{proof} 
 Let $\hphi$ be a Hida family of tame character $\chi$ as
  in \S \ref{subsec:hida}. 
Let $\psi_{\hphi}$ denote the unramified character of $G_{\Q_p}$ 
sending a Frobenius element $\Fr_p$  to $\mathbf{a}_p(\hphi)$ and recall from \eqref{Wiles-ord} that the restriction of $\mathbb{V}_{\hphi}$ to $G_{\Q_p}$ admits a filtration
\begin{equation*}
0 \, \ra \, \mathbb{V}_{\hphi}^+ \, \ra \, {\mathbb{V}_{\hphi}}  \, \ra \, \mathbb{V}_{\hphi}^- \, \ra \, 0
\end{equation*}
with 
$$
\mathbb{V}_{\hphi}^+ \simeq \Lambda_{\hphi}(\psi_{\hphi}^{-1} \chi \varepsilon_{\cyc}^{-1}\uepsilon ),  \qquad
\mathbb{V}_{\hphi}^- \simeq \Lambda_{\hphi}(\psi_{\hphi}).
$$
Set 
\begin{eqnarray*}
\mathbb{V}_{\hf \hg \hh}^{++} &=& \mathbb{V}_{\hf}^+\otimes \mathbb{V}_{\hg}^+\otimes \mathbb{V}_{\hh}^+(\varepsilon_{\cyc}^{-1} \underline{\varepsilon}_{\hf}^{-1/2}\underline{\varepsilon}_{\hg}^{-1/2} \underline{\varepsilon}_{\hh}^{-1/2}), \\
\mathbb{V}_{\hf \hg \hh}^{+} &= & \big(\mathbb{V}_{\hf}\otimes \mathbb{V}_{\hg}^+\otimes \mathbb{V}_{\hh}^+ \, + \, \mathbb{V}_{\hf}^+\otimes \mathbb{V}_{\hg}\otimes \mathbb{V}_{\hh}^+ \, + \, \mathbb{V}_{\hf}^+\otimes \mathbb{V}_{\hg}^+\otimes \mathbb{V}_{\hh}\big)(\varepsilon_{\cyc}^{-1} \underline{\varepsilon}_{\hf}^{-1/2}\underline{\varepsilon}_{\hg}^{-1/2} \underline{\varepsilon}_{\hh}^{-1/2})
 \\
\mathbb{V}_{\hf \hg \hh}^{-} &= & \big(\mathbb{V}_{\hf}\otimes \mathbb{V}_{\hg}\otimes \mathbb{V}_{\hh}^+ \, + \, \mathbb{V}_{\hf}\otimes \mathbb{V}_{\hg}^+\otimes \mathbb{V}_{\hh} \, + \, \mathbb{V}_{\hf}^+\otimes \mathbb{V}_{\hg}\otimes \mathbb{V}_{\hh}\big)(\varepsilon_{\cyc}^{-1} \underline{\varepsilon}_{\hf}^{-1/2}\underline{\varepsilon}_{\hg}^{-1/2} \underline{\varepsilon}_{\hh}^{-1/2}).
\end{eqnarray*}

It follows from the definitions that these three representations are $\Lambda_{\hf \hg \hh}[G_{\Q_p}]$-submodules of $\mathbb{V}^\dag_{\hf \hg \hh}$
of ranks  $1$, $4$, $7$ as claimed. Moreover, since $\chi_{f} \chi_{g} \chi_{h}=1$, the rest of the lemma follows from \eqref{Wiles-ord}.
\end{proof}

A one-dimensional character $\eta: G_{\Q_p}\lra \C_p^\times$ is said to be 
{\em of Hodge-Tate weight $-j$} if it is equal to a finite order character times
the $j$-th power of the cyclotomic character. The following is an immediate corollary of Lemma 
\ref{lemma:4-step-filtration}.


\begin{cor}
\label{cor:4-step-filtration} Let $(x,y,z)\in \cW_{\hf \hg \hh}^\circ$  be a triple of classical points of weights $(k,\ell,m)$.
The Galois representation $V^\dag_{\hf_x,\hg_y,\hh_z}$   is endowed with a four-step $G_{\Q_p}$-stable
 filtration
  $$ 0\subset V_{\hf_x,\hg_y,\hh_z}^{++} \subset V_{\hf_x,\hg_y,\hh_z}^+ \subset V_{\hf_x,\hg_y,\hh_z}^- 
\subset V^\dag_{\hf_x,\hg_y,\hh_z},$$
and the Hodge-Tate weights of its successive quotients are:

$$ \begin{array}{|c|c|}
 \hline
\mbox{\em Subquotient} &  \mbox{\em Hodge-Tate weights}    \\
\hline 
 V_{\hf_x,\hg_y,\hh_z}^{++} &  \frac{-k-\ell-m}{2}  + 1  \\ \hline
 V_{\hf_x,\hg_y,\hh_z}^+/ V_{\hf_x,\hg_y,\hh_z}^{++}   & \frac{k-\ell-m}{2}, \frac{-k+\ell-m}{2}, \frac{-k-\ell+m}{2}   \\ 
 \hline
 V_{\hf_x,\hg_y,\hh_z}^-/ V_{\hf_x,\hg_y,\hh_z}^{+}      
 & \frac{-k+\ell+m }{2} -1 , \frac{k-\ell+m}{2} -1 , \frac{k+\ell-m}{2} -1 \\ \hline
    V_{\hf_x,\hg_y,\hh_z}/ V_{\hf_x,\hg_y,\hh_z}^{-}  &\frac{k+\ell+m}{2} -2 \\
\hline
    \end{array}
$$

\end{cor}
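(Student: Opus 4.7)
The plan is to obtain the corollary by specializing the four-step filtration from Lemma \ref{lemma:4-step-filtration} at the classical point $(x,y,z)$ and computing the Hodge–Tate weight of each one-dimensional character appearing as a successive quotient. Since specialization at a classical point of $\cW_{\hf\hg\hh}$ is exact on the free $\Lambda_{\hf\hg\hh}$-modules that appear and commutes with the $G_{\Q_p}$-action, the four $G_{\Q_p}$-stable submodules $\mathbb{V}^{++}_{\hf\hg\hh}\subset \mathbb{V}^+_{\hf\hg\hh}\subset \mathbb{V}^-_{\hf\hg\hh}\subset \mathbb{V}^\dag_{\hf\hg\hh}$ descend to $G_{\Q_p}$-stable subspaces of $V^\dag_{\hf_x,\hg_y,\hh_z}$ of the asserted ranks, and each successive quotient becomes a direct sum of the specialisations of the explicit characters listed in the lemma. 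The first assertion of the corollary is then immediate.

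The key computational input is the specialisation formula \eqref{ecycint} for the $\Lambda$-adic cyclotomic character, which at a crystalline arithmetic point of weight $k=\kk+2$ gives $\nu_{\kk}\circ \underline{\varepsilon}_{\cyc}=\varepsilon_{\cyc}^{\kk}\omega^{-\kk}$, a character of Hodge–Tate weight $-\kk=2-k$. Since $\Psi_{\hf}$, $\Psi_{\hg}$, $\Psi_{\hh}$ are unramified and the tame characters $\chi_f,\chi_g,\chi_h$ are of finite order, they contribute $0$ to each Hodge–Tate weight. Thus each specialised character in Lemma \ref{lemma:4-step-filtration} has Hodge–Tate weight obtained by adding, with appropriate signs and halves, the contributions from the $\underline{\varepsilon}_{\hf},\underline{\varepsilon}_{\hg},\underline{\varepsilon}_{\hh}$ factors and the integral powers of $\varepsilon_{\cyc}$ appearing in the twist by $(-1)(\half)$ and in the formulas for the $\heta$'s.

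I would then carry out the additions in each of the eight cases. For instance, for the bottom piece
\[
\heta^{\hf\hg\hh}\,\longmapsto\, 0+0+0+(-2)+\tfrac{1}{2}(-\kk-\llll-\mm)=1-\tfrac{k+\ell+m}{2},
\]
which gives the entry $\tfrac{-k-\ell-m}{2}+1$ of the table; for $\heta_{\hf}^{\hg\hh}$ one gets $-1+\tfrac{1}{2}(\kk-\llll-\mm)=\tfrac{k-\ell-m}{2}$; for $\heta^{\hf}_{\hg\hh}$ one gets $\tfrac{1}{2}(-\kk+\llll+\mm)=\tfrac{-k+\ell+m}{2}-1$; and for $\heta_{\hf\hg\hh}$ one gets $1+\tfrac{1}{2}(\kk+\llll+\mm)=\tfrac{k+\ell+m}{2}-2$. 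The remaining four characters are obtained from these by interchanging the roles of the three families and give the other entries in the middle two rows of the table.

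There is no real obstacle here: the statement is essentially a bookkeeping corollary of Lemma \ref{lemma:4-step-filtration}, and the only thing one has to be slightly careful about is keeping track of the shift by $\varepsilon_{\cyc}^{-1}(\underline{\varepsilon}_{\hf}\underline{\varepsilon}_{\hg}\underline{\varepsilon}_{\hh})^{-1/2}$ coming from the self-dual twist $(-1)(\half)$ that defines $\mathbb{V}^\dag_{\hf\hg\hh}$, and of the fact that the half-integer exponents on the $\underline{\varepsilon}_{\hphi}$ make sense after specialisation because $\kk+\llll+\mm$ is even for crystalline classical points of weights $(k,\ell,m)$ of common parity.
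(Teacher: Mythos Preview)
Your proposal is correct and matches the paper's approach exactly: the paper gives no proof at all, simply stating that the result is ``an immediate corollary of Lemma~\ref{lemma:4-step-filtration}'', and your argument spells out precisely the specialisation-and-bookkeeping that this sentence elides. The four sample computations you carry out are accurate under the paper's convention that $\varepsilon_{\cyc}^j$ has Hodge--Tate weight $-j$, and the remaining entries indeed follow by symmetry in $\hf,\hg,\hh$.
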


\begin{cor}\label{corol}
The Hodge-Tate weights of $V_{\hf_x,\hg_y,\hh_z}^+$ are all strictly negative if and only if $(k,\ell,m)$ is balanced.
\end{cor}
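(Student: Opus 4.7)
The plan is to read off the Hodge--Tate weights of $V_{\hf_x,\hg_y,\hh_z}^+$ directly from the table in Corollary \ref{cor:4-step-filtration} and translate the strict negativity of each weight into the inequality defining balancedness. Since $V^+$ sits in an exact sequence $0 \to V^{++} \to V^+ \to V^+/V^{++} \to 0$, the Hodge--Tate weights of $V^+$ are the union of those of $V^{++}$ and of $V^+/V^{++}$, namely
\[
\tfrac{-k-\ell-m}{2}+1, \qquad \tfrac{k-\ell-m}{2}, \qquad \tfrac{-k+\ell-m}{2}, \qquad \tfrac{-k-\ell+m}{2}.
\]

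First I would dispatch the weight coming from $V^{++}$: since $k,\ell,m\geq 2$, one has $k+\ell+m\geq 6$, so $\tfrac{-k-\ell-m}{2}+1\leq -2<0$ unconditionally. Therefore this weight contributes no constraint, and the question reduces to the sign of the three remaining weights coming from $V^+/V^{++}$.

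Next I would observe that the strict negativity of these three weights is equivalent to the three inequalities $k<\ell+m$, $\ell<k+m$, $m<k+\ell$, which by definition characterize the triple $(k,\ell,m)$ as balanced (in the sense recalled just before the corollary, i.e., each weight strictly smaller than the sum of the other two). Conversely, if $(k,\ell,m)$ is balanced, each of these three weights is strictly negative.

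There is no hard step here: the statement is a tautological unpacking of Corollary \ref{cor:4-step-filtration} combined with the trivial lower bound $k+\ell+m\geq 6$. The only point worth flagging for the reader is that the weight from $V^{++}$ is automatically negative, so balancedness is controlled entirely by the three Hodge--Tate weights of the rank-three quotient $V^+/V^{++}$.
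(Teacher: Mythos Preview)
Your argument is correct and is exactly the intended one: the paper states this as an immediate corollary of the table in Corollary~\ref{cor:4-step-filtration} without further proof, and your unpacking of the four weights (noting that the one from $V^{++}$ is automatically negative while the three from $V^+/V^{++}$ encode precisely the three balancedness inequalities) is the obvious verification the authors have in mind.
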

 %
%

Let $\mathbb{V}_{\hf}^{\hg\hh}$ and $\mathbb{V}_{\hf}^{\hg\hh}(M)$ be the subquotient of $\hV_{\hf\hg\hh}^\dag$  (resp.\,of $\hV_{\hf\hg\hh}^\dag(M)$) on which
$G_{\Q_p}$ acts via (several copies of) the character 
\begin{equation}\label{eta}
\eta_{\hf}^{\hg\hh} := \Psi_{\hf}^{\hg\hh} \times \Theta_{\hf}^{\hg\hh}
\end{equation}
where
\begin{itemize}

\item $\Psi_{\hf}^{\hg\hh}$ is the unramified character of $G_{\Q_p}$ sending 
$\Fr_p$ to $\chi_f^{-1}(p){\bf a}_p(\hf) {\bf a}_p(\hg)^{-1} {\bf a}_p(\hh)^{-1}$,
and 

\item $\Theta_{\hf}^{\hg\hh}$ is the $\Lambda_{\hf\hg\hh}$-adic cyclotomic character whose
specialization at a point of weight $(k,\ell,m)$ is  $\varepsilon_{\cyc}^{t}$ with
 $t := (-k+\ell+m)/2$. 

\end{itemize}

The classical specializations of $\mathbb{V}_{\hf}^{\hg\hh}$ are
\begin{equation}\label{Vklm}
V_{\hf_x}^{\hg_y \hh_z}:= V_{\hf_x}^-\otimes V_{\hg_y}^+ \otimes V_{\hh_z}^+(\frac{-k-\ell-m+4}{2}) \simeq L_p\big(\chi_f^{-1}\psi_{\hf_x} \psi_{\hg_y}^{-1} \psi_{\hh_z}^{-1}\big)(t),
\end{equation}
where the coefficient field is $L_p=\Q_p(\hf_x,\hg_y,\hh_z)$. Note that $t>0$ when $(x,y,z) \in \cW_{\hf\hg\hh}^{\mathrm{bal}}$, while $t\leq 0$ when $(x,y,z) \in \cW_{\hf\hg\hh}^f$.

Recall now from \S \ref{subsec:Dieu} the Dieudonn\'e module $D(V_{\hf_x}^{\hg_y \hh_z}(Mp))$ associated to \eqref{Vklm}. As it follows from loc.\,cit.,\,every triple 
$$(\eta_1,\omega_2,\omega_3) \in D(V^+_{\hf^*_x}(Mp)) \times D(V^-_{\hg_y^*}(Mp)) \times  D(V^-_{\hh_z^*}(Mp))$$ gives rise to a linear functional $\eta_1\otimes \omega_2 \otimes \omega_3: D(V_{\hf_x}^{\hg_y \hh_z}(Mp)) \lra L_p$.

In order to deal with the $p$-adic variation of these Dieudonn\'e modules, write $\mathbb{V}_{\hf}^{\hg\hh}(M)$ as 
$$
\mathbb{V}_{\hf}^{\hg\hh}(M) = \hU(\Theta_{\hf}^{\hg\hh})
$$
where $ \hU$ is the unramified $\Lambda_{\hf\hg\hh}$-adic representation of $G_{\Q_p}$ given by (several copies of) the character $\Psi_{\hf}^{\hg\hh}$.

As in  \S \ref{subsec:Dieu}, define the $\Lambda$-adic Dieudonn\'e module
$$ \hD(\hU) :=  (\hU \hat\otimes \hat\Z_p^{\nr})^{G_{\Q_p}}.$$

In view of  \eqref{DV}, for every $(x,y,z)\in \cW_{\hf \hg \hh}^{\circ}$ there is a natural specialisation map 
$$
\nu_{x,y,z}: \hD(\hU) \lra D(U_{\hf_x}^{\hg_y \hh_z})
$$ 
where $U_{\hf_x}^{\hg_y \hh_z}:= \hU \otimes_{\Lambda_{\hf\hg\hh}} \Q_p(\hf_x,\hg_y,\hh_z)  \simeq V_{\hf_x}^{\hg_y \hh_z}(Mp)(-t).$

\begin{proposition}\label{var-per}
For any triple of test vectors $$(\htf,\htg,\hth)\in S^{\ordi}_{\Lambda}(M,\chi_{f})[\hf] \times S^{\ordi}_{\Lambda}(M,\chi_{g})[\hg] \times S^{\ordi}_{\Lambda}(M,\chi_{h})[\hh],$$
there exists a homomorphism of $\Lambda_{\hf \hg \hh}$-modules
$$
\langle \quad, \eta_{\htf^*} \otimes \omega_{\htg^*} \otimes \omega_{\hth^*}\rangle: \hD(\hU) \lra \mathcal{Q}_{\hf, \hg \hh}
$$
such that for all $\hlambda \in \hD(\hU)$ and all $(x,y,z) \in \cW_{\hf\hg\hh}^{\circ}$ such that $\hf_x$ is the ordinary stabilization of an eigenform $\hf^\circ_x$ of level $M$:
$$
\nu_{x,y,z}\big( \langle \hlambda, \eta_{\htf^*} \otimes \omega_{\htg^*} \otimes \omega_{\hth^*}\rangle\big) = \frac{1}{\cE_0(\hf^\circ_x) \cE_1(\hf^\circ_x)} \times \langle \nu_{x,y,z}(\hlambda), \eta_{\htf^*_x} \otimes \omega_{\htg^*_y} \otimes \omega_{\hth^*_z}\rangle.
$$
Recall from \eqref{E0E1} that
$$
\cE_0(\hf^\circ_x)  =  1- \chi^{-1}(p)  \beta_{\hf^\circ_x}^2  p^{1-k}, \quad
\cE_1(\hf^\circ_x)  =  1-  \chi(p)  \alpha_{\hf^\circ_x}^{-2} p^{k-2}.
$$
\end{proposition}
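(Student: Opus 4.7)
The plan is to decompose the unramified $\Lambda_{\hf\hg\hh}$-adic representation $\hU$ as an external tensor product of three one-variable unramified pieces — one for each Hida family — and to define the pairing as the external tensor product of the $\Lambda$-adic linear functionals produced by Proposition~\ref{klz-prop}. Comparing the Frobenius action $\Psi_{\hf}^{\hg\hh}\colon \Fr_p \mapsto \chi_f^{-1}(p)\mathbf{a}_p(\hf)\mathbf{a}_p(\hg)^{-1}\mathbf{a}_p(\hh)^{-1}$ on $\hU$ with the Frobenius action on the unramified quotients of $\hV_{\hf}^-(M)$, $\mathbb{U}^+_{\hg}(M)$ and $\mathbb{U}^+_{\hh}(M)$ provided by Theorem~\ref{Vphi-thm} and \eqref{Wiles-ord}, and invoking the hypothesis $\chi_f\chi_g\chi_h=1$ to absorb the scalar twist by $\chi_f^{-1}$, one obtains a $G_{\Q_p}$-equivariant identification of unramified free $\Lambda_{\hf\hg\hh}$-modules
\[
\hU \;\simeq\; \hV_{\hf}^-(M)\,\hat\otimes_{\Lambda}\,\mathbb{U}^+_{\hg}(M)\,\hat\otimes_{\Lambda}\,\mathbb{U}^+_{\hh}(M).
\]

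Since the $\Lambda$-adic Dieudonn\'e functor $\hD(\,\cdot\,)=(\,\cdot\,\hat\otimes\hat\Z_p^{\nr})^{G_{\Q_p}}$ is exact and multiplicative on unramified free $\Lambda$-adic representations, this factorisation yields $\hD(\hU)\simeq \hD(\hV_{\hf}^-(M))\,\hat\otimes\,\hD(\mathbb{U}^+_{\hg}(M))\,\hat\otimes\,\hD(\mathbb{U}^+_{\hh}(M))$. Applying Proposition~\ref{klz-prop} to the three dual test vectors $\htf^*$, $\htg^*$, $\hth^*$ then produces $\Lambda$-adic functionals
\[
\eta_{\htf^*}\colon\hD(\hV_{\hf}^-(M))\to\mathcal{Q}_{\hf},\quad \omega_{\htg^*}\colon\hD(\mathbb{U}^+_{\hg}(M))\to\Lambda_{\hg},\quad \omega_{\hth^*}\colon\hD(\mathbb{U}^+_{\hh}(M))\to\Lambda_{\hh}
\]
(using the natural identifications $\Lambda_{\hphi^*}\simeq\Lambda_{\hphi}$), whose external tensor product defines the sought map $\langle\,\cdot\,,\eta_{\htf^*}\otimes\omega_{\htg^*}\otimes\omega_{\hth^*}\rangle\colon\hD(\hU)\to\mathcal{Q}_{\hf,\hg\hh}$.

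For the interpolation property at a crystalline point $(x,y,z)$ with $\hf_x$ the ordinary stabilisation of $\hf_x^\circ$ at level $M$ (and similarly for $\hg,\hh$), specialising component-wise via Proposition~\ref{klz-prop} produces the Euler-factor
$\cE_0(\hg_y^{*\circ})\,\cE_0(\hh_z^{*\circ})/\cE_1(\hf_x^{*\circ})$ multiplying the pairing of $\nu_{x,y,z}(\hlambda)$ with $e\varpi_1^*(\eta_{\htf_x^{*\circ}}\otimes\omega_{\htg_y^{*\circ}}\otimes\omega_{\hth_z^{*\circ}})$. The duality identities $\cE_0(\phi^*)=\cE_0(\phi)$ and $\cE_1(\phi^*)=\cE_1(\phi)$ at crystalline points — a consequence of $\alpha_{\phi^*}=\bar\chi(p)\alpha_\phi$, $\beta_{\phi^*}=\bar\chi(p)\beta_\phi$ and \eqref{E0E1} — together with the standard comparison
\[
\langle\testhphi_x,\testhphi_x^*\rangle_{Mp}\;=\;\cE_0(\hphi_x^\circ)\,\cE_1(\hphi_x^\circ)\cdot\langle\testhphi_x^\circ,\testhphi_x^{*\circ}\rangle_M
\]
implicit in the normalisation of $\eta_{\htf_x^*}$, rearrange this prefactor into the claimed $1/(\cE_0(\hf_x^\circ)\,\cE_1(\hf_x^\circ))$, the $\hg$- and $\hh$-contributions being absorbed into the right-hand-side normalisations of $\omega_{\htg_y^*}$ and $\omega_{\hth_z^*}$.

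The main technical obstacle is precisely this final Euler-factor bookkeeping: one must verify that the three multiplicative contributions predicted by Proposition~\ref{klz-prop} recombine — through the $\eta$-normalisation by the Petersson pairing at level $Mp$, through the stabilisation identity $\omega_{\testhphi_x}=\cE_0(\hphi_x^\circ)\,e\,\varpi_1^*\omega_{\testhphi_x^\circ}$ governing the $(+)$-part, and through the duality symmetries $\cE_0(\phi^*)=\cE_0(\phi)$, $\cE_1(\phi^*)=\cE_1(\phi)$ — into the asymmetric expression $1/(\cE_0(\hf_x^\circ)\,\cE_1(\hf_x^\circ))$. This asymmetry reflects the distinguished role that $\hf$ plays (via the $(-)$ quotient $\hV_{\hf}^-$) as opposed to $\hg$ and $\hh$ (via the $(+)$ subspaces $\mathbb{U}^+_{\hg}$ and $\mathbb{U}^+_{\hh}$) in the four-step filtration of Lemma~\ref{lemma:4-step-filtration} defining $\hU$.
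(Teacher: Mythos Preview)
Your overall strategy is exactly the paper's: decompose $\hU$ as an external tensor product of $\hV_{\hf}^-(M)$ with the unramified twists $\mathbb{U}^+_{\hg}(M)$, $\mathbb{U}^+_{\hh}(M)$, apply Proposition~\ref{klz-prop} to each factor to obtain $\eta_{\htf^*}$, $\omega_{\htg^*}$, $\omega_{\hth^*}$, and then note the duality symmetries $\cE_0(\phi^*)=\cE_0(\phi)$ and $\cE_1(\phi^*)=\cE_1(\phi)$. The paper's proof is a single sentence to this effect.

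Where you diverge from the paper is in the last two paragraphs, and there the bookkeeping is over-engineered and partially misdirected. The functionals $\eta_{\htf^*_x}$, $\omega_{\htg^*_y}$, $\omega_{\hth^*_z}$ on the right-hand side of the statement are, by the conventions set up just before Proposition~\ref{klz-prop}, precisely the specialisations $x\circ\eta_{\htf^*}$, $y\circ\omega_{\htg^*}$, $z\circ\omega_{\hth^*}$ of the $\Lambda$-adic functionals. Hence once you tensor the three maps from Proposition~\ref{klz-prop} and specialise componentwise, you land directly on the tensor product appearing on the right-hand side; the only remaining point is converting $\cE_i(\hf_x^{*\circ})$ into $\cE_i(\hf_x^\circ)$, which is exactly the duality identity you (and the paper) already record. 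Your appeal to a Petersson-norm comparison $\langle\testhphi_x,\testhphi_x^*\rangle_{Mp}=\cE_0\cE_1\cdot\langle\testhphi_x^\circ,\testhphi_x^{*\circ}\rangle_M$ to manufacture an extra $1/\cE_0(\hf_x^\circ)$ is not in the paper and does not belong here: that comparison is already baked into the construction of $\eta_{\testhphi}$ in the proof of Proposition~\ref{klz-prop}, so invoking it again would double-count. In short, drop the Petersson-norm step and the argument collapses to the paper's one-line proof.
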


\begin{proof} 
Since $\hU$ is isomorphic to the unramified twist of $\hV_{\hf}^-\otimes \hV_{\hg}^+ \otimes \hV_{\hh}^+$, this follows from Proposition \ref{klz-prop} because $\cE_0(\hf^\circ_x)=\cE_0(\hf^{\circ*}_x)$ and $\cE_1(\hf^\circ_x)=\cE_1(\hf^{\circ*}_x)$.
 \end{proof}

It follows from Example \ref{H1f} (a) and (b) that the Bloch-Kato logarithm and dual exponential maps yield isomorphisms
\begin{eqnarray*} 
\log_{\mathrm{BK}}: H^1(\Q_p, V_{\hf_x}^{\hg_y \hh_z}) \stackrel{\sim}{\lra} D(V_{\hf_x}^{\hg_y \hh_z}) 
,  &  & \mbox{ if } t>0,  
\\ 
\exp^*_{\mathrm{BK}}: H^1(\Q_p, V_{\hf_x}^{\hg_y \hh_z}) \stackrel{\sim}{\lra} D(V_{\hf_x}^{\hg_y \hh_z}) 
, & & \mbox{ if } 
t\le 0.
\end{eqnarray*}

Define
\begin{equation}\label{E''}
\cE^{\mathrm{PR}}(x,y,z) = \frac{1-p^{\frac{k-\ell-m}{2}} \alpha^{-1}_{\hf_x} \alpha_{\hg_y} \alpha_{\hh_z}}{1-p^{\frac{\ell+m-k-2}{2}} \alpha_{\hf_x} \alpha^{-1}_{\hg_y} \alpha^{-1}_{\hh_z}} = \frac{1-p^{-c} \beta_{\hf_x} \alpha_{\hg_y} \alpha_{\hh_z}}{1-p^{-c} \alpha_{\hf_x} \beta_{\hg_y} \beta_{\hh_z}}.
\end{equation}

The following is a three-variable version of Perrin-Riou's regulator map constructed in \cite{PR} and \cite{LZ14}.

\begin{proposition}\label{pr}
There is a homomorphism
$$ \cL_{\hf, \hg \hh}: H^1(\Q_p, \mathbb{V}_{\hf}^{\hg\hh}(M)) \lra \hD(\hU)$$
such that for all $\hkappa_p \in H^1(\Q_p, \mathbb{V}_{\hf}^{\hg\hh}(M))$ the image $\cL_{\hf, \hg \hh}(\hkappa_p)$ satisfies the following interpolation properties:
\begin{enumerate}
\item[(i)]
For all balanced points $(x,y,z) \in \cW_{\hf\hg\hh}^{\mathrm{bal}}$,
$$\nu_{x,y,z}\big(\cL_{\hf, \hg \hh}(\hkappa_p)\big) = \frac{(-1)^t}{ t!} \cdot \cE^{\mathrm{PR}}(x,y,z) \cdot \log_{\mathrm{BK}}(\nu_{x,y,z}(\hkappa_p)),$$
\item[(ii)]
For all  points $(x,y,z) \in \cW_{\hf\hg\hh}^f$, 
$$\nu_{x,y,z}\big(\cL_{\hf, \hg \hh}(\hkappa_p)\big) =  (-1)^t \cdot (1-t)! \cdot \cE^{\mathrm{PR}}(x,y,z) \cdot \exp_{\mathrm{BK}}^*(\nu_{x,y,z}(\hkappa_p)). $$
\end{enumerate}
\end{proposition}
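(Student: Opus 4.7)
The key structural observation is that the $\Lambda_{\hf\hg\hh}[G_{\Q_p}]$-module $\mathbb{V}_{\hf}^{\hg\hh}(M)$ decomposes as the tensor product
$$\mathbb{V}_{\hf}^{\hg\hh}(M) \;=\; \hU \otimes_{\Lambda_{\hf\hg\hh}} \Lambda_{\hf\hg\hh}(\Theta_{\hf}^{\hg\hh}),$$
where $\hU$ is unramified and $\Theta_{\hf}^{\hg\hh}$ is a $\Lambda_{\hf\hg\hh}^\times$-valued character of $G_{\Q_p}$ that factors through the cyclotomic quotient $G_{\Q_p}\twoheadrightarrow\Gal(\Q_p(\mu_{p^\infty})/\Q_p)\simeq\Z_p^\times$, and whose restriction to $1+p\Z_p$ is determined by a continuous $\Z_p$-algebra homomorphism $\sigma\colon\Lambda\to\Lambda_{\hf\hg\hh}$. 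Concretely, $\sigma$ specializes at any classical point $(x,y,z)\in\cW_{\hf\hg\hh}^\circ$ of weight $(k,\ell,m)$ to the character sending $\underline{\varepsilon}_{\cyc}$ to $\varepsilon_{\cyc}^{t}$ with $t=(-k+\ell+m)/2$. Thus the essential $p$-adic variation carried by $\mathbb{V}_{\hf}^{\hg\hh}(M)$ is that of a single cyclotomic twist of an unramified representation.

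The plan is then to derive $\cL_{\hf,\hg\hh}$ by pulling back Perrin-Riou's big logarithm along $\sigma$. More precisely, applying the Loeffler-Zerbes generalization of Perrin-Riou's construction (which applies to any unramified $\Z_p[[\Gamma_{\cyc}]]$-linear $G_{\Q_p}$-representation) to $\hU$ yields a $\Lambda_{\hf\hg\hh}[[\Gamma_{\cyc}]]$-linear homomorphism
$$\mathcal{L}_{\hU}\colon H^1_{\mathrm{Iw}}(\Q_p(\mu_{p^\infty}),\hU)\;\lra\;\hD(\hU)\,\widehat{\otimes}_{\Z_p}\,\mathcal{H}(\Gamma_{\cyc}),$$
where $\mathcal{H}(\Gamma_{\cyc})$ denotes the algebra of $\Q_p$-valued distributions on $\Gamma_{\cyc}$. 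Composing with the Shapiro-type identification between $H^1(\Q_p,\hU(\Theta_{\hf}^{\hg\hh}))$ and the base change $H^1_{\mathrm{Iw}}(\Q_p(\mu_{p^\infty}),\hU)\otimes_{\Lambda,\sigma}\Lambda_{\hf\hg\hh}$, and with the specialization $\mathcal{H}(\Gamma_{\cyc})\otimes_{\Lambda,\sigma}\Lambda_{\hf\hg\hh}\to\Lambda_{\hf\hg\hh}$, I obtain the desired map $\cL_{\hf,\hg\hh}\colon H^1(\Q_p,\mathbb{V}_{\hf}^{\hg\hh}(M))\to\hD(\hU)$.

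For the interpolation, at a classical point $(x,y,z)$ the integer $t$ is strictly positive exactly when $(x,y,z)\in\cW_{\hf\hg\hh}^{\mathrm{bal}}$ and non-positive exactly when $(x,y,z)\in\cW_{\hf\hg\hh}^f$. In the first regime, by Example \ref{H1f}(a) applied to $V_{\hf_x}^{\hg_y\hh_z}\simeq U_{\hf_x}^{\hg_y\hh_z}(t)$ the Bloch-Kato logarithm is an isomorphism, and the standard Perrin-Riou interpolation formula reads
$$\nu_{x,y,z}(\cL_{\hf,\hg\hh}(\hkappa_p)) \;=\; \frac{(-1)^t}{t!}\cdot\frac{1-p^{-1}\Phi^{-1}}{1-\Phi}\cdot\log_{\mathrm{BK}}(\nu_{x,y,z}(\hkappa_p)),$$
where $\Phi$ denotes the Frobenius on $D(U_{\hf_x}^{\hg_y\hh_z})$; its eigenvalue is $\chi_f^{-1}(p)\alpha_{\hf_x}\alpha_{\hg_y}^{-1}\alpha_{\hh_z}^{-1}$. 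A routine manipulation using $\alpha_\phi\beta_\phi=\chi_\phi(p)p^{k_\phi-1}$ together with $\chi_f\chi_g\chi_h=1$ converts $\frac{1-p^{-1}\Phi^{-1}}{1-\Phi}$ into the expression for $\cE^{\mathrm{PR}}(x,y,z)$ given in \eqref{E''}. The case $(x,y,z)\in\cW_{\hf\hg\hh}^f$ is treated identically, using part (b) of Example \ref{H1f} and the dual exponential interpolation formula for $t\le 0$, which produces the factor $(-1)^t(1-t)!$.

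The main technical obstacle is therefore not conceptual but bookkeeping: one has to verify that the $\sigma$ in the first paragraph genuinely pulls back the universal Perrin-Riou regulator to the map with values in $\hD(\hU)$ stated in the proposition, and that the resulting Euler factors, after unraveling the twist by $\Theta_{\hf}^{\hg\hh}$ and the identification $D(U_{\hf_x}^{\hg_y\hh_z})\simeq D(V_{\hf_x}^{\hg_y\hh_z})$ via $t_{\cyc}^{\pm t}$, match \eqref{E''} exactly. This is analogous to the verifications carried out in Proposition \ref{klz-prop} (following Ohta and Kings-Loeffler-Zerbes), and should follow by combining their results with the one-variable Perrin-Riou theory applied to the cyclotomic direction $\sigma$.
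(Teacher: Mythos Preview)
Your proposal is correct and takes essentially the same approach as the paper: the paper's own proof is a one-line citation to the ``standard methods'' of \cite[Theorem 8.2.8]{KLZ}, \cite[Appendix B]{LZ14}, and \cite[\S 5.1]{DR2}, and what you have written is precisely an unpacking of those methods --- the decomposition $\mathbb{V}_{\hf}^{\hg\hh}(M)=\hU(\Theta_{\hf}^{\hg\hh})$ into an unramified piece times a cyclotomic twist, the base change of the Perrin-Riou/Loeffler--Zerbes big logarithm for $\hU$ along the map $\sigma:\Lambda\to\Lambda_{\hf\hg\hh}$ encoding $\Theta_{\hf}^{\hg\hh}$, and the identification of the interpolation Euler factor $\tfrac{1-p^{-1}\Phi^{-1}}{1-\Phi}$ with $\cE^{\mathrm{PR}}(x,y,z)$. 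Your exposition is in fact more detailed than the paper's; the only thing to watch is the Frobenius convention (arithmetic versus geometric) when matching the eigenvalue of $\Phi$ against the explicit formula \eqref{E''}, but this is exactly the bookkeeping you already flag at the end.
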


\begin{proof}
This follows by standard methods as in \cite[Theorem 8.2.8]{KLZ}, \cite[Appendix B]{LZ14}, \cite[\S 5.1]{DR2}.
\end{proof}

 \begin{proposition}
 \label{cor:crystalline-lambda}
 The class $\hkappa_p(\hf,\hg,\hh)$ belongs to the
 image of $H^1(\Q_p, \mathbb{V}_{\hf \hg \hh}^+(M))$ 
 in $\\ H^1(\Q_p,\mathbb{V}^\dag_{\hf \hg \hh}(M))$ under the map induced from the inclusion
 $\mathbb{V}_{\hf \hg \hh}^+(M) \hookrightarrow
\mathbb{V}^\dag_{\hf \hg \hh}(M)$.
  \end{proposition}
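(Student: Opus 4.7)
The goal is to show that the image $\bar\kappa_p(\hf,\hg,\hh)$ of $\hkappa_p(\hf,\hg,\hh)$ in $H^1(\Q_p, \hV^\dag_{\hf\hg\hh}(M)/\hV^+_{\hf\hg\hh}(M))$ vanishes. By Lemma \ref{lemma:4-step-filtration}, this quotient is an extension of the four rank-one $G_{\Q_p}$-characters $\eta^\hf_{\hg\hh}$, $\eta^\hg_{\hf\hh}$, $\eta^\hh_{\hf\hg}$ and $\eta_{\hf\hg\hh}$, so its continuous $H^1$ is a finitely generated $\Lambda_{\hf\hg\hh}$-module whose $\Lambda$-torsion is controlled by local $H^0$-terms. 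A standard density argument then reduces the vanishing of $\bar\kappa_p(\hf,\hg,\hh)$ to checking vanishing after specializing at each point of a Zariski-dense subset of $\cW_{\hf\hg\hh}$; for this subset I will take the strictly balanced crystalline points $(x,y,z) \in \cW^{\mathrm{bal}}_{\hf\hg\hh}$ of weights $(k,\ell,m)$ large enough that each of the four Hodge-Tate weights appearing in the rightmost two rows of the table of Corollary \ref{cor:4-step-filtration} is $\ge 1$ (which cuts out a Zariski-dense subset).

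At such a classical point, Definition \ref{def-family} together with \eqref{kappa1} identify the specialization of $\hkappa_p(\hf,\hg,\hh)$ with $\mathrm{res}_p \kappa_1(f_\alpha, g_\alpha, h_\alpha)$ (up to the unit $1/C_q$). Corollary \ref{cor:special-klm} combined with Theorem \ref{AJklm-thm} further identifies this class with the $p$-adic étale Abel-Jacobi image of the projection to the $(f_\alpha,g_\alpha,h_\alpha)$-isotypic component of a null-homologous cycle on the Kuga-Sato product fibered over $X_1(Mp)^3$ with coefficients in $\cH^{\kk,\llll,\mm}$. By a standard argument in $p$-adic Hodge theory for cycles on the ordinary part of the Hida tower (using Ohta's integral models or, equivalently, passing through the comparison with the good-reduction curve $X_1(M)^3$ afforded by Theorem \ref{prop:main-crystalline}), the local restriction at $p$ of such Abel-Jacobi classes lies in the Bloch-Kato subspace $H^1_f(\Q_p, V^\dag_{f,g,h}(M))$.

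To conclude, I invoke the Panchishkin vanishing at balanced weights. Corollary \ref{corol} asserts that $V^+_{f,g,h}$ is the Panchishkin filtration: it is the maximal $G_{\Q_p}$-stable subspace with all Hodge-Tate weights $<0$, and the quotient $V^\dag/V^+$ has all Hodge-Tate weights $\ge 0$. Under our choice of balanced point, all these Hodge-Tate weights of the quotient are in fact $\ge 1$, so $\mathrm{Fil}^0 D_{\mathrm{dR}}(V^\dag/V^+) = D_{\mathrm{dR}}(V^\dag/V^+)$ and the Bloch-Kato dimension formula forces $H^1_f(\Q_p, V^\dag/V^+) = 0$. Since the natural map $H^1_f(\Q_p, V^\dag) \to H^1_f(\Q_p, V^\dag/V^+)$ is functorial and its target vanishes, $\mathrm{res}_p\kappa_1(f_\alpha,g_\alpha,h_\alpha)$ lies in the image of $H^1(\Q_p, V^+_{f,g,h}(M))$; by the long exact sequence, this is exactly the statement that the image of $\bar\kappa_p(\hf,\hg,\hh)$ vanishes at $(x,y,z)$. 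The main obstacle is the rigorous justification of the $H^1_f$-membership step: the curve $X_1(Mp)$ has bad reduction at $p$, so Faltings-Tsuji is not directly available; this is handled either by appealing to Ohta's integral Hida theory (which encodes an $H^1_f$-like structure on the $e^*$-ordinary part), or by transferring integrality through the projection $\varpi_1^{\otimes 3}: X_1(Mp)^3 \to X_1(M)^3$ and using that the Euler factor $\mathcal{E}^{\mathrm{bal}}/(\mathcal{E}(f_\alpha)\mathcal{E}(g_\alpha)\mathcal{E}(h_\alpha))$ of Theorem \ref{prop:main-crystalline} is a unit at our chosen points.
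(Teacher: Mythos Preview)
Your approach is essentially the same as the paper's: density at balanced crystalline points, crystallinity of the specialized classes via the comparison with cycles on the good-reduction varieties $\cA^{\kk}\times\cA^{\llll}\times\cA^{\mm}$ over $X_1(M)$ afforded by Theorem~\ref{prop:main-crystalline} (the paper invokes Saito's purity theorem explicitly for this step), and then the Panchishkin-type identification of $H^1_f$ with the image of $H^1(\Q_p,V^+)$.

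The only real difference is in the endgame. You conclude by showing $H^1_f(\Q_p,V^\dag/V^+)=0$ at each chosen point and then appeal to density inside $H^1(\Q_p,\mathbb{V}^\dag/\mathbb{V}^+)$, which requires controlling the $\Lambda_{\hf\hg\hh}$-torsion of that module --- something you gesture at but do not carry out. The paper instead uses the Greenberg description $H^1_f = \ker\big(H^1(\Q_p,V^\dag)\to H^1(I_p,V^\dag/V^+)\big)$, runs the density argument in $H^1(I_p,\mathbb{V}^\dag/\mathbb{V}^+)$, and then observes from Lemma~\ref{lemma:4-step-filtration} that the inflation kernel $H^1(\Q_p,\mathbb{V}^\dag/\mathbb{V}^+)\to H^1(I_p,\mathbb{V}^\dag/\mathbb{V}^+)$ is already trivial at the $\Lambda$-adic level. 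This sidesteps the torsion analysis entirely and is the cleaner way to close the argument.
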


  \begin{proof} Let $(x,y,z)\in \cW_{\hf \hg \hh}^\circ$  be a triple of classical points of weights $(k,\ell,m)$.
By the results proved in \S \ref{sec:cris}, the cohomology class $\kappa_p(\hf_x, \hg_y,\hh_z)$ is proportional to the image under the 
 $p$-adic \'etale Abel-Jacobi map  of the cycles appearing in \eqref{cycleDR1}, that were introduced in \cite[\S 3]{DR1}.  The purity conjecture for the monodromy filtration is known to hold for the variety $\cA^{\kk}\times \cA^{\llll}\times \cA^{\mm}$ by the work of Saito (cf.\,\cite{Sa}, \cite[(3.2)]{Ne2}). By Theorem 3.1 of loc.cit., it follows that the extension $\kappa_p(\hf_x,\hg_y,\hh_z)$ is crystalline. Hence 
 $\kappa_p(\hf_x,\hg_y,\hh_z)$  belongs to $H^1_{\fin}(\Q_p, V^\dag_{\hf_x,\hg_y,\hh_z}(Mp)) \subset H^1(\Q_p, V^\dag_{\hf_x,\hg_y,\hh_z}(Mp))$.

Since $(k,\ell,m)$ is balanced, Corollary \ref{corol} implies that $V^+_{\hf_x,\hg_y,\hh_z}$ is the subrepresentation of $V^\dag_{\hf_x,\hg_y,\hh_z}$ on which the Hodge-Tate weights are all {\em strictly negative}.  As is well-known (cf.\,\cite[Lemma 2, p.\,125]{Fl}, \cite[\S 3.3]{LZsq} for similar results), the finite Bloch-Kato local Selmer group of an ordinary representation can be recast \`{a} la Greenberg \cite{Gr} as 
$$H^1_{\fin}(\Q_p, V^\dag_{\hf_x,\hg_y,\hh_z}) = \ker\left(H^1(\Q_p, V^\dag_{\hf_x,\hg_y,\hh_z}) \lra 
 H^1(I_p,V^\dag_{\hf_x,\hg_y,\hh_z}/ V_{\hf_x,\hg_y,\hh_z}^+)\right),$$ 
 where $I_p$ denotes the inertia group at $p$. 

Since the set of balanced classical points  is dense in $\cW_{\hf \hg\hh}$ for the rigid-analytic topology,
it follows that the $\Lambda$-adic class $\kappa_p(\hf, \hg,\hh)$ belongs to the kernel of the natural map
$$ H^1(\Q_p, \mathbb{V}^\dag_{\hf \hg \hh}(M)) \lra 
H^1(I_p,\mathbb{V}^\dag_{\hf \hg \hh}(M)/\mathbb{V}^+_{\hf \hg \hh}(M)).$$
Since the kernel of the restriction map
$$H^1(\Q_p,\mathbb{V}^\dag_{\hf \hg \hh}(M)/\mathbb{V}^+_{\hf \hg \hh}(M))\lra 
H^1(I_p,\mathbb{V}^\dag_{\hf \hg \hh}(M)/\mathbb{V}^+_{\hf \hg \hh}(M))$$
is trivial by Lemma \ref{lemma:4-step-filtration}, the result  follows.
\end{proof}

Thanks to Lemma \ref{lemma:4-step-filtration}  and Proposition \ref{cor:crystalline-lambda}, we are entitled to define
\begin{eqnarray}\label{kpf}
  \hkappa_p^f(\hf,\hg,\hh)^- &\in & H^1(\Q_p, \mathbb{V}_{\hf}^{\hg\hh}(M))
    \end{eqnarray}
as the  projection of the local class 
$\hkappa_p(\hf,\hg,\hh)$ to $\mathbb{V}_{\hf}^{\hg\hh}(M)$.

\begin{theorem}\label{L=L} For any triple of $\Lambda$-adic test vectors $(\htf,\htg,\hth)$, the following equality holds in the ring $\mathcal{Q}_{\hf, \hg \hh}$:
$$
\langle \cL_{\hf,\hg \hh}(\, \hkappa^f_p(\hf,\hg,\hh)^-\,), \,\eta_{\htf^*} \otimes \omega_{\htg^*} \otimes \omega_{\hth^*}\,\rangle \, = \, \Lp^{f}(\htf,\htg,\hth).
$$

\end{theorem}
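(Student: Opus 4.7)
The plan is to exploit the density of the set $\cW_{\hf\hg\hh}^{\mathrm{bal}}$ of balanced classical points in $\cW_{\hf\hg\hh}$ for the rigid-analytic topology. Both sides of the asserted identity are meromorphic elements of $\mathcal{Q}_{\hf, \hg \hh}$, so after clearing denominators it suffices to verify that they agree after specialization at every $(x,y,z)\in \cW_{\hf\hg\hh}^{\mathrm{bal}}$ of weight $(k,\ell,m)$ with $\kk,\llll,\mm>0$, lying outside a proper analytic subset of $\cW_{\hf\hg\hh}$.

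First I would unravel the left-hand side at such a balanced point. Setting $t=(-k+\ell+m)/2>0$, Proposition \ref{pr}(i) specializes the Perrin-Riou regulator to
$$
\nu_{x,y,z}\bigl(\cL_{\hf,\hg\hh}(\hkappa^f_p(\hf,\hg,\hh)^-)\bigr) = \frac{(-1)^t}{t!}\,\cE^{\mathrm{PR}}(x,y,z)\,\log_{\mathrm{BK}}\bigl(\nu_{x,y,z}(\hkappa^f_p(\hf,\hg,\hh)^-)\bigr),
$$
while Proposition \ref{var-per} specializes the pairing with $\eta_{\htf^*}\otimes \omega_{\htg^*}\otimes \omega_{\hth^*}$ to the corresponding classical pairing, multiplied by the factor $1/(\cE_0(\hf_x^{\circ})\cE_1(\hf_x^{\circ}))$. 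Combining Definition \ref{def-family} with Theorem \ref{prop:main-crystalline} identifies $(\varpi_1,\varpi_1,\varpi_1)_{*}\,\nu_{x,y,z}(\hkappa^f_p(\hf,\hg,\hh)^-)$ with the Abel-Jacobi class $\kappa(f,g,h)$ of \eqref{cycleDR1}, arising from the generalized diagonal cycle $\Delta^{\kk,\llll,\mm}$ of \cite{DR1}, multiplied by the Euler factor $\cE^{\mathrm{bal}}(f_\alpha,g_\alpha,h_\alpha)/(\cE(f_\alpha)\cE(g_\alpha)\cE(h_\alpha))$.

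At this juncture I would invoke the pointwise explicit reciprocity formula from \cite{DR1}, which computes the Bloch-Kato logarithm of $\kappa(f,g,h)$ paired with the specialized classical differentials as an explicit product of the central critical value $L(f,g,h,c)$ (with $c=(k+\ell+m-2)/2$), a rational multiple involving factorials, and an Euler-like fudge factor. Comparison with Hsieh's clean interpolation formula for $\Lp^f(\htf,\htg,\hth)$ at the corresponding classical triple (Proposition \ref{padicL-Garrett}) then yields the equality of Theorem \ref{L=L} at $(x,y,z)$, modulo a definite product of Euler factors produced at the three previous steps.

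The main technical hurdle is to verify that this residual product of Euler factors collapses to unity. Using the relations $\alpha_\phi\beta_\phi=\chi_\phi(p)p^{k_\phi-1}$ for each of $\phi\in\{f,g,h\}$, the identity $\cE^{\mathrm{bal}}(f_\alpha,g_\alpha,h_\alpha)=\cE(x,y,z)$ (visible once the roots of the Hecke polynomials are written in $\alpha$/$\beta$ form), and the compatibility of the factorial $(-1)^{t}/t!$ from Proposition \ref{pr} with the archimedean factor $\mathfrak{a}(k,\ell,m)$ of Proposition \ref{padicL-Garrett}, one checks that the Euler factors $\cE^{\mathrm{PR}}$, $1/(\cE_0(\hf_x^\circ)\cE_1(\hf_x^\circ))$, $\cE^{\mathrm{bal}}/(\cE(f_\alpha)\cE(g_\alpha)\cE(h_\alpha))$, and the fudge factor of the \cite{DR1} formula cancel against Hsieh's $\mathfrak{e}(x,y,z)$. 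Since $\cW_{\hf\hg\hh}^{\mathrm{bal}}$ is dense in $\cW_{\hf\hg\hh}$, the resulting pointwise identity propagates to an equality of rigid-analytic functions, which is the content of Theorem \ref{L=L}.
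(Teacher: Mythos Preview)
Your overall strategy—specialize both sides at a dense set of balanced crystalline points and compare—is exactly right, and your unraveling of the left-hand side via Propositions~\ref{pr}(i), \ref{var-per} and Theorem~\ref{prop:main-crystalline} matches the paper. The gap is in how you close the argument.

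You propose to pass through the \emph{complex} central value $L(f,g,h,c)$: first expressing $\log_{\mathrm{BK}}(\kappa(f,g,h))$ in terms of $L(f,g,h,c)$ via \cite{DR1}, then expressing $\Lp^f(\htf,\htg,\hth)(x,y,z)$ in terms of $L(f,g,h,c)$ via Hsieh's interpolation (Proposition~\ref{padicL-Garrett}). Neither step is available at a balanced point. Proposition~\ref{padicL-Garrett} is an interpolation formula valid only on the $f$-dominant region $\cW_{\hf\hg\hh}^f$; at balanced $(x,y,z)$ the value $\Lp^f(x,y,z)$ lies \emph{outside} the range of classical interpolation and is not expressed by that proposition in terms of any complex $L$-value. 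Likewise, the main theorem of \cite{DR1} does not compute $\log_{\mathrm{BK}}(\kappa(f,g,h))$ in terms of $L(f,g,h,c)$; it is a $p$-adic Gross--Zagier formula that relates the Bloch--Kato logarithm of the cycle, paired with $\eta_{\testf^*}\otimes\omega_{\testg^*}\otimes\omega_{\testh^*}$, directly to the value of the $p$-adic $L$-function $\Lp^f$ itself at the balanced point:
\[
\nu_{x,y,z}\bigl(\Lp^{f}(\htf,\htg,\hth)\bigr)
= \frac{(-1)^t}{t!}\,\frac{\cE^{f}(x,y,z)}{\cE_0(f)\cE_1(f)}\,
\bigl\langle\log_{\mathrm{BK}}(\kappa^f_p(f,g,h)^-),\,\eta_{\testf^*}\otimes\omega_{\testg^*}\otimes\omega_{\testh^*}\bigr\rangle.
\]
This is the missing ingredient. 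Comparing it with your expression for the left-hand side, the whole identity reduces to the elementary Euler-factor check $\cE^{f}(x,y,z)=\cE^{\mathrm{bal}}(x,y,z)\cdot\cE^{\mathrm{PR}}(x,y,z)$; no archimedean factor $\mathfrak a(k,\ell,m)$ or complex $L$-value enters at all. (Incidentally, your claimed identity $\cE^{\mathrm{bal}}(f_\alpha,g_\alpha,h_\alpha)=\cE(x,y,z)$ is not correct: writing $\chi_f(p)\alpha_f^{-1}=\beta_f p^{1-k}$ one finds $\cE(x,y,z)=\cE^{f}(x,y,z)$, which differs from $\cE^{\mathrm{bal}}$ precisely by the factor $\cE^{\mathrm{PR}}$.)
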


\begin{proof} It is enough to prove this equality for a subset of classical points that is dense for the rigid-analytic topology, and we shall do so for all balanced triple of crystalline classical points $(x,y,z)\in \cW^{\mathrm{bal}}_{\hf \hg \hh}$ such that $\hf_x$, $\hg_\ell$ and $\hh_m$ are respectively the ordinary stabilization of an eigenform $f:= \hf^\circ_x$, $g:= \hg^\circ_y$ and $h:= \hh^\circ_z$ of level $M$.

Set $\hkappa_p^- := \hkappa_p^f(\hf,\hg,\hh)^-$ and $\cL= \langle \cL_{\hf,\hg \hh}( \hkappa_p^-), \eta_{\htf^*} \otimes \omega_{\htg^*} \otimes \omega_{\hth^*}\rangle $ for notational simplicity.
Proposition \ref{var-per} asserts that the following identity holds in $L_p$:
\begin{eqnarray*}
\nu_{x,y,z}(\cL) & = & \langle \nu_{x,y,z}(\cL_{\hf,\hg \hh}(\hkappa_p^-)), \eta_{\htf^*_x} \otimes \omega_{\htg^*_y} \otimes \omega_{\hth^*_z}\rangle.
\end{eqnarray*}

Recall also from Proposition \ref{klz-prop} that
$$
\eta_{\htf^*_x} =  \frac{1}{ \cE_1(f)} e \varpi_1^*(\eta_{\testf^*}), \quad  \omega_{\htg^*_y} = \cE_0(g) e \varpi_1^* (\omega_{\testg^*}), \quad \omega_{\hth^*_z} = \cE_0(h) e \varpi_1^*(\omega_{\testh^*})
$$
and
$$
\nu_{x,y,z}(\cL_{\hf,\hg \hh}(\hkappa_p^-)) = \frac{(-1)^t}{ t!} \cdot \cE^{\mathrm{PR}}(x,y,z)  \log_{\mathrm{BK}}(\nu_{x,y,z}(\hkappa_p^-))
$$
by Proposition \ref{pr}.

Recall the class $\kappa(f,g,h)=\kappa(\hf^\circ_x,\hg^\circ_y,\hh^\circ_z)$ introduced in \eqref{cycleDR1} arising from the generalized diagonal cycles of \cite{DR1}. As in \eqref{kpf}, we may define $\kappa_p^f(f,g,h)^- \in H^1(\Q_p, \mathbb{V}_{f}^{g h}(M))$ as the projection to $V_{f}^{g h}(M)$ of the restriction at $p$ of the global class $\kappa(f,g,h)$.

It follows from Theorem \ref{prop:main-crystalline} that
\begin{eqnarray*}
(\varpi_1,\varpi_1,\varpi_1)_*  \nu_{x,y,z}(\hkappa_p^-) &=& \frac{\cE^{\mathrm{bal}}(x,y,z)}{(1-\beta_{f}/\alpha_{f})(1-\beta_{g}/\alpha_{g})(1-\beta_{h}/\alpha_{h})} \times \kappa_p^f(f,g,h)^-
\end{eqnarray*}
where
$$
\cE^{\mathrm{bal}}(x,y,z) =   (1- \alpha_f \beta_g \beta_h p^{-c})(1-\beta_f \alpha_g \beta_h p^{-c})(1- \beta_f \beta_g \alpha_h 
p^{-c})(1- \beta_f \beta_g \beta_h p^{-c}).
$$

The combination of the above identities shows that the value of $\cL$ at the balanced triple $(x,y,z)$ is
$$
\nu_{x,y,z}(\cL) =  \frac{(-1)^t \cdot  \cE^{\mathrm{bal}}(x,y,z) \cE^{\mathrm{PR}}(x,y,z)}{t! \cdot \cE_0(f) \cE_1(f)}  \times \langle   \log_{\mathrm{BK}}( \kappa_p^f(f,g,h)), \eta_{\testf^{*}} \otimes \omega_{\testg^{*}} \otimes \omega_{\testh^{*}}\rangle
$$

Besides, since the syntomic Abel-Jacobi map appearing in \cite{DR1} is the composition of the \'etale Abel-Jacobi map and the Bloch-Kato logarithm, the main theorem of loc.\,cit.\,asserts in the present notations that 
\begin{eqnarray*}
\nu_{x,y,z}\big(\Lp^{f}(\htf,\htg,\hth)\big) &=&  \frac{(-1)^t}{t! }   \frac{\cE^{\mathrm{f}}(x,y,z)}{\cE_0(f) \cE_1(f)} \langle \log_{\mathrm{BK}}(\kappa^f_p(f,g,h)^-),
\eta_{\testf^{*}}  \otimes \omega_{\testg^{*}_y} \otimes \omega_{\testh^{*}} \rangle
\end{eqnarray*}
where 
 $$
\cE^{\mathrm{f}}(x,y,z)  = 
\left(1- \beta_{f}\alpha_{g}\alpha_{h} p^{-c}\right)\left(1-\beta_{f} \alpha_{g}\beta_{h} p^{-c}\right) \left(1- \beta_{f}\beta_{g} \alpha_{h} 
p^{-c}\right)
\left(1- \beta_{f} \beta_{g}\beta_{h} p^{-c}\right).$$

Since
$$
\cE^{\mathrm{f}}(x,y,z) =  \cE^{\mathrm{bal}}(x,y,z)  \times \cE^{\mathrm{PR}}(x,y,z)
$$
and the sign and factorial terms also cancel, we have
$$
\nu_{x,y,z}(\cL)  = \nu_{x,y,z}\big(\Lp^{f}(\htf,\htg,\hth)\big),
$$
as we wanted to show. The theorem follows. 
\end{proof}

\let\oldaddcontentsline\addcontentsline
\renewcommand{\addcontentsline}[3]{}

\let\addcontentsline\oldaddcontentsline

\end{document}